\newtheorem*{lemma**}{Lemma}
\newtheorem*{theorem**}{Theorem}
\numberwithin{equation}{section}
\newcommand{\globalcolor}[1]{%
  \color{#1}\global\let\default@color\current@color
}
\newif\ifdark
\definecolor{darkred}{rgb}{0.9,0.2,0.2}
\definecolor{darkblue}{rgb}{0.7,0.3,1}
\definecolor{darkgreen}{rgb}{0.1,0.9,0.1}
\definecolor{pagebackground}{rgb}{.15,.21,.18}
\definecolor{pageforeground}{rgb}{.84,.84,.85}
\definecolor{darkred}{rgb}{0.7,0.1,0.1}
\definecolor{darkblue}{rgb}{0.4,0.1,0.8}
\definecolor{darkgreen}{rgb}{0.1,0.7,0.1}
\definecolor{pagebackground}{rgb}{1,1,1}
\definecolor{pageforeground}{rgb}{0,0,0}
\definecolor {processblue}{cmyk}{0.96,0,0,0}
\DeclareMathAlphabet{\mathbbm}{U}{bbm}{m}{n}
\DeclareFontFamily{U}{BOONDOX-calo}{\skewchar\font=45 }
\DeclareFontShape{U}{BOONDOX-calo}{m}{n}{
  <-> s*[1.05] BOONDOX-r-calo}{}
\DeclareFontShape{U}{BOONDOX-calo}{b}{n}{
  <-> s*[1.05] BOONDOX-b-calo}{}
\DeclareMathAlphabet{\mcb}{U}{BOONDOX-calo}{m}{n}
\SetMathAlphabet{\mcb}{bold}{U}{BOONDOX-calo}{b}{n}
\let\epsilon\varepsilon
\def\E{{\symb E}}
\def\F{{\mathcal F}}
\def\FC{\mathscr{C}}
\def\X{\mathbb{X}}
\def\WW{{ \mathbb W}}
\def\X{{\mathbf X}}
\def\XX{{\mathbb X}}
\def\err{\mathbf {Er}}
\def\C{\mathcal{C}}
\def\F{\mathcal{F}}
\def\f{\frac}
\def\1{\mathbf{1}}
\def\${|\!|\!|}
\def\<{\langle}
\def\>{\rangle}
\setlist{noitemsep,topsep=4pt}
\def\para_#1{/\!\!/_{\!#1}}
\def\slash{\kern0.18em/\penalty\exhyphenpenalty\kern0.18em}
\def\dash{\kern0.18em--\penalty\exhyphenpenalty\kern0.18em}
\newcommand*{\fat}{}
\DeclareRobustCommand*{\fat}{%
\mathbin{\mathpalette\bigcdot@{}}}
\newcommand*{\bigcdot@scalefactor}{.5}
\newcommand*{\bigcdot@widthfactor}{1.15}
\newcommand*{\bigcdot@}[2]{%
  \sbox0{$#1\vcenter{}$}
  \sbox2{$#1\cdot\m@th$}%
  \hbox to \bigcdot@widthfactor\wd2{%
    \hfil
    \raise\ht0\hbox{%
      \scalebox{\bigcdot@scalefactor}{%
        \lower\ht0\hbox{$#1\bullet\m@th$}%
      }%
    }%
    \hfil
  }%
}
\newtheorem{convention}[lemma]{Convention}
\newtheorem{assumption}[lemma]{Assumption}
\begin{document}
\title{Functional Limit Theorems for power series with rapid decay of moving averages of Hermite processes}

\author{Johann Gehringer \\Imperial College London \footnote{johann.gehringer18@imperial.ac.uk}}

\maketitle

\begin{abstract}
We aim to generalize the homogenisation theorem in \cite{Gehringer-Li-tagged} for a passive tracer interacting with a fractional Gau{\ss}ian noise to also cover fractional non-Gau{\ss}ian noises. 
To do so we analyse limit theorems for normalized functionals of Hermite-Volterra processes, extending the result in \cite{Diu-Tran} to power series with fast decaying coefficients. We obtain either convergence to a Wiener process, in the short-range dependent case, or to a Hermite process, in the long-range dependent case. Furthermore, we prove convergence in the multivariate  case with both, short and long-range dependent components. 
Applying this theorem we  obtain a homogenisation result for a slow/fast system driven by such Hermite noises.
\end{abstract}

{ \scriptsize {\it  keywords:} Central limit theorems, fractional noise, Hermite Ornstein-Uhlenbeck process, Hermite processes, homogenization of fast/slow systems}

{\scriptsize \textit{MSC Subject classification:} 	60G22, 60F05, 	60G10, 60G18  }

\setcounter{tocdepth}{2}
\tableofcontents
\section{Introduction}\label{section-introduction}
In \cite{Gehringer-Li-tagged} the following model for a passive tracer was considered, 
\begin{equation}\label{multi-scale}
\left\{ \begin{aligned} \dot x_t ^\epsilon &=\sum_{k=1}^N\alpha_k(\epsilon) \, f_k(x_t^\epsilon) \,G_k(y_t^\epsilon),\\
x_0^\epsilon&=x_0, \end{aligned}\right.  
\end{equation}
where the $\alpha_k(\epsilon)$'s denote suitable scalings, the usual diffusive scaling corresponds to $\alpha(\epsilon) = \f {1} {\sqrt \epsilon}$,  and $y^{\epsilon}_t$  the rescaled stationary  fractional Ornstein-Uhlenbeck process. It was shown that given functions $f_k$ and $G_k$ of suitable regularity, for details we refer to \cite{Gehringer-Li-tagged}, the solutions to equation (\ref{multi-scale}), $x^{\epsilon}_t$, converge weakly to a stochastic process $x_t$. Furthermore, the limiting process $x_t$ solves the following stochastic differential equation,
$$
x_t =  x_0 + \sum_{k=1}^N \int_0^t f(x_s) dX^k_s ,
$$
where $X^k_t = \lim_{\epsilon \to 0} \alpha_k(\epsilon) \int_0^t G_k(y^{\epsilon}_s) ds.$ In particular these limits exist, however, unlike in the case of diffusive homogenisation they are not necessarily given by  Wiener processes. In case the memory of the fractional Ornstein-Uhlenbeck process is too strong the resulting equation might also be driven by Hermite processes, $Z^{H,m}_t$. 

Hermite processes are a two-parameter family of self similar processes with stationary increments. They can be represented via iterated Wiener integrals, up to normalizing constants, as follows,
$$
Z_t^{H,m}= \int_{\R^m} \int_0^t \prod_{j=1}^m (s-\xi_j)_+^{  H_0 - \f 3 2 } \, ds \,  d  B_{\xi_1} \dots d  B_{\xi_m},
$$
hence, the rank $m$ processes belong to the $m^{th}$ Wiener chaos. In particular they are Gau{\ss}ian if and only if $m=1$ and  in this case the above formulae matches the Mandelbrot Van-Ness representation of a fractional Brownian motion, c.f. \cite{Mandelbrot-VanNess}. Moreover, Hermite processes appear as limits in so called non-central limit theorems, see \cite{Taqqu-75,Taqqu,Rosenblatt,Bai-Taqqu,Dobrushin,Dobrushin-Major,Gehringer-Li-fOU}.  For an application to financial modelling, see \cite{Hermite_Markets}. Wiener integrals with respect to them as well as the Hermite Ornstein-Uhlenbeck have been introduced in \cite{Maejima-Ciprian}. Moreover, they satisfy
$$ \E \left[ Z^{H,m}_t Z^{H,m}_s \right] = \f 1 2 \left(\vert t\vert^{2H} + \vert s\vert^{2H} - \vert t-s\vert^{2H} \right),$$
thus, all Hermite processes have the same covariance structure as fractional Brownian motions.

Therefore, it seems natural to also consider passive tracers in Hermite noise fields.
Our aim is to provide the foundation for a similar type of homogenisation theorem covering systems as equation (\ref{multi-scale}) when the  environment for the passive tracer is given by a moving average of a Hermite process, 
$$
y_t = \int_{-\infty}^{t} x(t-s) Z^{H,m}_s,
$$
for a suitable regular kernel $x$, see Assumption \ref{assumption-decay-correlation-kernel} below.
To do so we require central and non-central limit theorems for functionals of the form
$$
\alpha(\epsilon) \int_0^{\f T \epsilon} G(y_t) dt,$$
where $\alpha(\epsilon)$
again denotes a suitable scaling.
In case $G=X^2 $ it was shown in \cite{Diu-Tran}, that, after subtracting the average, the right scaling is given by $\alpha(\epsilon) = \epsilon^{2H_0 -1}$, where $H_0 = 1 + \f {H-1} {m}$, and the limiting process is given by a Rosenblatt process; a Hermite process for which $m=2$.
In this article we are extending this result to functions of the form $G(X)= \sum_{k=0}^{\infty} c_k X^k$, for which the coefficients satisfy $\vert c_k \vert \lesssim \f {1} {k!}$. Due to recent improvements concerning asymptotic independence on Wiener chaoses, \cite{Nourdin-Rosinski,Nourdin-Nualart-Peccati} we are also able to obtain results in a multivariate setting, see Theorem \ref{theorem-mixed} below as well as section \ref{section-ai}.

Depending on the chaos rank, $w$, of $G$, see definition \ref{def-chaos-rank} below, and $H$ we obtain a similar picture as in the Gau{\ss}ian case. In case $\f {(H-1)w} {m}+1 < \f 1 2$ the limiting process is given by a Wiener process, whereas for $\f {(H-1)w} {m}+1 > \f 1 2$ one again obtains  a Hermite process.

After proving the functional limits theorems we apply them  to the homogenization of fast/slow systems. We use the continuity of solutions to young and rough differential equation with respect to their drivers. As continuous maps preserve weak convergence this enables us to conclude weak convergence of the solutions to our ODE's by proving weak convergence of the drivers in H\"older/rough path topologies, see Theorem \ref{thm-application} below. For other applications of this method, c.f. \cite{roughflows,Kelly-Melbourne,Chevyrev-Friz-Korepanov-Melbourne-Zhang,Gehringer-Li-tagged}. 
\subsection{Statement of Results}

\begin{definition}\label{def-chaos-rank}
Given a stationary process $y_s$ that belongs to the $L^2$ space generated by a  Wiener process and a function $G$ such that $G(y_s) \in L^2(\Omega)$ we say that $G$ has chaos rank $w$ with respect to $y_s$ if and only if all projections of $G(y_s)$ onto the first $w-1$ Wiener chaoses are $0$ and  the projection of $G(y_s)$ onto the $w^{th}$ chaos is non-zero. 
\end{definition}
\begin{remark}
$G$ being centred with respect to the invariant distribution of $y_s$ is equivalent to $G$ having chaos rank bigger or equal $1$. In case $y_s$ is a normalized Gau{\ss}ian this coincides with the Hermite rank of $G$, see \cite{Taqqu-75}. 	
\end{remark}

\begin{convention}
If $G^j(X) = \sum_{k=0}^{\infty} c_{j,k} X^k$, where $j \in \{ 1,2,\dots, N \} $, are functions with chaos rank $w_j$ with respect to $y_t$, we order them  in such a manner that $H^*(w_j) < \f 1 2 $ for $j \leq n$ and $H^*(w_j) > \f 1 2$ in case $j> n$, for some $n \in \{ 0, \dots , N\}$. 
\end{convention}
For $T \in [0,1],$ set
	$$
	\bar{G}^{j,\epsilon}_T = \epsilon^{H^*(w_j) \vee \f 1 2} \int_0^{\f T \epsilon} G^j(y_t) dt.
	$$

The following is the main theorem of this article.
\begin{theorem*}\label{theorem-mixed}
	Fix $H \in ( \f 1 2, 1)$, $m \in \N$. Let $x$ be a kernel satisfying Assumption \ref{assumption-decay-correlation-kernel} and set $$y_t = \int_{-\infty}^t x(t-s) dZ^{H,m}_s.$$ For $j \in \{ 1, \dots, N \} $ let  $G^j(X) = \sum_{k=0}^{\infty} c_{j,k} X^k$ such that $G^j$ has chaos rank $w_j$ with respect to $y_t$ and $ \vert c_{j,k} \vert \lesssim \f 1 {k!}$ be given. 
	Assume further that $H^*(w_j) \not =\f 1 2 $.  Then, the following statements hold.
\begin{itemize}
\item 	
	As $\epsilon \to 0$, $( \bar{G}^{1,\epsilon}_T , \dots, \bar{G}^{N,\epsilon}_T)$
	converges  weakly  in $\C^{\gamma}\left( [0,1] , \R^N \right)$, for $\gamma \in (0, \f 1 2)$ in case $n>0$ and  $\gamma \in (0, \min_{j \in \{ 1, \dots , N \} } H^*(w^j)) $ otherwise.
\item	The limit  is a vector valued process $ \left(W_T,Z_T\right)$, where the first part $W_T$ is a $n$-dimensional Wiener process and
	$Z_T$ a $N-n$ dimensional Hermite process. 
	Furthermore,
	\begin{enumerate}
		\item [(1)]   $W_T \in \R^n$  and $Z_T \in \R^{N-n}$ are independent.
		\item [(2)]  The Wiener part $W_T$ of the process has the following covariance structure, 
\begin{align*}
\E\left[ W^j_{T} W^l_{S} \right]&= 2 ( T \wedge S) \int_0^{\infty} \E \left[ G^{j}(y_s) G^{l}(y_0) \right] ds\\
&=  2 (T \wedge S) \sum_{k,k'=0}^{\infty} \sum_{d=0}^{\infty} \int_0^{\infty} \E \left[ I_d(h^{d,k}_s) I_d(h^{d,k'}_0)  \right] ds
\end{align*}
		
\item [(3)] Each component of the  Hermite part of the process $Z_T$ has a representation by a Wiener process, which is the same for all component, that is  independent of $ W_T$:
$$Z_T = \left( \kappa_{n+1}  Z^{H^*(w_{n+1}),w_{n+1}}_T, \dots , \kappa_N  Z^{H^*(w_N),w_N}_T \right),
$$
where $\kappa_j = \lim_{\epsilon \to 0} \Vert  \bar{G}^{j,\epsilon}_1 \Vert_{L^2}$. 	\end{enumerate}
\end{itemize}\end{theorem*}
\begin{remark}
For $T,S \in [0,1]$, $Z$ thus has the following covariance structure: $$ \E\left[ Z^{H^*(w_{j}),w_{j}}_T Z^{H^*(w_{l}),w_{l}}_S \right] =  \delta^{j,l} \kappa_{j} \kappa_{l}   \int_{\R^{w_j}} d^{w_j} \xi \int_{0}^T \prod_{q=1}^{w_j} (r-\xi_q)^{ \f {H^*(w_j)-1} {w_j} -\f 1 2} dr  \int_{0}^S \prod_{q=1}^{w_l} (r'-\xi_q)^{ \f {H^*(w_j)-1} {w_j} -\f 1 2} dr'. 
$$ 
\end{remark}
\begin{remark} Given a function $G$,
it is not obvious how to construct a power series of a specified chaos rank. However,  we can center $G$ with respect to the first $w-1$ chaoses.  To do so set denote by   $P_{w-1}(G(y_s))$ the projection of  $G(y_s)$  onto the first $w-1$ Wiener chaoses  and and set
$\mathfrak{G}(y_s)= G(y_s) - P_{w-1}(G(y_s))$. Then,  $\mathfrak{G}(y_s)$ behaves like a function with chaos rank $w$ and Theorem \ref{theorem-mixed} is still applicable to $\mathfrak{G}$.
\end{remark}

As an application of the above result we  obtain the following theorem.
\begin{theorem*}\label{thm-application}
Let $y_t$ and $G$ be given as in Theorem A.

	Fix $ f \in \C^3_b([0,1],\R) $, $x_0 \in \R$, and consider the equations
	\begin{equation}\label{limit-eq}
	d x_t^\epsilon =\f{ \alpha(\epsilon)} {\epsilon} f(x_t^\epsilon) G(y^{\epsilon}_t) dt, 
	\qquad  x_0^\epsilon=x_0,
	\end{equation} 
where
	$$\alpha(\epsilon) = \begin{cases}
	\sqrt{\epsilon} &\hbox{ in case } H^*(w) < \f 1 2\\
	\epsilon^{H^*(w)} & \hbox{ in case } H^*(w) > \f 1 2,
	\end{cases} 
	$$
	and $y^{\epsilon}_t = y_{\f t \epsilon}$. Then, the following holds.
	\begin{enumerate}
		\item If
		$H^*(w) > \f 1 2$, $x_t^\epsilon$ converges weakly in $\C^{\gamma}([0,1],\R)$  to the solution to the Young differential equation
		$d\bar x_t = c f(\bar x_t) \,dZ_t^{H^*(w),w}$  with initial value $x_0$, for $\gamma \in (0, H^*(w))$ and 
		$$ c= \lim_{\epsilon \to 0 } \Vert \epsilon^{H^*(w)} \int_0^{ \f 1 \epsilon} G(y_s) ds \Vert_{L^2(\Omega)}.$$
		\item If
		$H^*(w) < \f 1 2$, 
		$x_t^\epsilon$ converges weakly  in  $\C^\gamma([0,1],\R)$ to the solution of the Stratonovich stochastic differential equation
		$d\bar x_t = c  f(\bar x_t) \circ \,dW_t$ with $ \bar x_0=x_0$, for $\gamma \in(0, \f 1 2)$, $W$ denotes a standard Wiener process and 
		$$ c= \lim_{\epsilon \to 0 } \Vert \sqrt{\epsilon} \int_0^{ \f 1 \epsilon} G(y_s) ds \Vert_{L^2(\Omega)}.$$
	\end{enumerate} 
\end{theorem*}

\section{Notation}
\begin{enumerate}
	\item $(\Omega,\F,\P)$ denotes our underlying probability space 
	\item $\lambda$ denotes the Lebesgue measure on the respective spaces or a parameter of the Hermite Ornstein-Uhlenbeck process
	\item $B$ denotes a two sided standard Brownian motion
	\item $\hat{B}$ denotes a Gau{\ss}ian complex-valued random spectral measure to be defined in section \ref{subsubsection-spectral-measure}
	\item $I_d$ denotes a $d$ dimensional Wiener isometry given by iterated Wiener integrals
	\item $\hat{I}_d$ denotes a $d$ dimensional iterated Wiener integral with respect to $\hat{B}$.
	\item $H$ denotes the self-similarity of our underlying Hermite process
	\item $m$ denotes the rank of our underlying Hermite process
	\item $H_0= 1+ \f {H-1} {m}$
	\item $H^*(d)=(H_0-1)d+1$
	\item $f(x) \lesssim g(x)$ denotes less or equal up to a constant; there exists a constant $M$ such that for all $x$, $f(x) \leq M g(x)$
	\item $f_+ = \max(0,f)$ 
	\item As we have to deal with many integrals we sometimes use the notation $\int dx f(x)$ instead of $\int f(x) dx$. Furthermore, instead of $ \int_{\R^k} ds_1 \dots ds_k f(s_1, \dots, s_k)$ we sometimes write $ \int_{\R^k} d^ks f(s_1, \dots, s_k)$. Here $d^k s$ denotes that we integrate over the $s$ variables of which there are exactly $k$. Sometimes the index is not $1$ to $k$, due to possible double indices, but, nevertheless, $d^ks$ denotes that there are exactly $k$ of them.
\end{enumerate}

\section{Preliminaries}\label{section-preliminary}

Given $f \in L^2(\R^a,\lambda)$ we denote its $a^{th}$ multiple Wiener-It\^o integral by $I_a(f)= \int_{\R^a} f(\xi_1, \dots ,\xi_a) dB_{\xi_1} \dots dB_{\xi_a}$, where $B$ is a two-sided Wiener process and the integral does not run over diagonals.
For symmetric functions $f$ we obtain $I_a(f) = a! \int_{\R} \int_{-\infty}^{\xi_1} \dots \int_{-\infty}^{\xi_{a-1}} f(\xi_1, \dots , \xi_{a}) dB_{\xi_1} \dots dB_{\xi_a}$.
Furthermore, let $\tilde{f}$ denote $f$'s symmetrization, then,  $I_a(f) = I_a(\tilde{f})$.
In particular, for $f \in L^2(\R^a,\lambda)$ and $g \in L^2(\R^b,\lambda)$,
\begin{equation*}\label{eqaution-L2-isometrie}
\E \left[ I_a(f) I_b(g) \right] =  \delta_{a,b} a! \< \tilde{f}, \tilde{g} \>_{L^2(\R^a,\lambda)}.
\end{equation*}
For $ r \leq a \wedge b $ we denote the $r^{th}$ contraction between $f$ and $g$ by 
$$f \otimes_r g (\xi_1, \dots, \xi_{a+b-2r}) = \int_{\R^r} f(\xi_1, \dots, \xi_{a-r}, s_1, \dots , s_r)   g(\xi_{a-r+1}, \dots, \xi_{a+b-2r}, s_1, \dots , s_r) ds_1 \dots ds_r.$$
Moreover, we denote its symmetrization by $f \tilde{\otimes}_r g$, see  \cite{Nualart}. We conclude this section with an identity which we use plentifully in section \ref{section-decomposition}.
\begin{lemma}\label{product-formulae}
Given symmetric functions $f \in L^2(\R^a,\lambda)$ and $g \in L^2(\R^b,\lambda)$, then,  the following relation holds
\begin{equation}\label{equation-produktformel}
I_a(f) I_b(g) = \sum_{r=0}^{a \wedge b} r! {a \choose r} {b \choose r} I_{a+b-2r} ( f \tilde{\otimes}_{r} g ).
\end{equation}
\end{lemma}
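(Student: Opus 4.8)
The plan is to prove the identity by a two-step reduction. First I would note that since $I_a(f) = I_a(\tilde f)$ for any $f$, and since the claimed right-hand side only involves symmetrized contractions, it suffices to establish the formula for symmetric $f$ and $g$, which is exactly the hypothesis; no loss of generality there. Second, I would recall that the multiple Wiener-Itô integral $I_a(f)$ of a symmetric $L^2$ kernel can be written in terms of (generalized) Hermite polynomials when $f$ is a symmetrized tensor product $h^{\otimes a}$ with $\|h\|_{L^2}=1$: indeed $I_a(h^{\otimes a}) = H_a(I_1(h))$ where $H_a$ is the $a$-th Hermite polynomial (with the probabilists' normalization). The standard route is then to (i) prove \eqref{equation-produktformel} for such "rank-one" kernels using the classical Hermite product formula
\[
H_a(x) H_b(x) = \sum_{r=0}^{a\wedge b} r!\binom{a}{r}\binom{b}{r} H_{a+b-2r}(x),
\]
and then (ii) extend by bilinearity and density, since finite linear combinations of tensor powers $h^{\otimes a}$ (equivalently, of kernels of the form $h_1\,\tilde\otimes\,\cdots\,\tilde\otimes\, h_a$) are dense in the symmetric subspace of $L^2(\R^a,\lambda)$, and both sides of \eqref{equation-produktformel} are continuous in $f$ and $g$ with respect to the $L^2$ norms (the contraction maps $(f,g)\mapsto f\tilde\otimes_r g$ are bounded bilinear maps $L^2(\R^a)\times L^2(\R^b)\to L^2(\R^{a+b-2r})$ by Cauchy-Schwarz).

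For step (i), the cleanest self-contained argument is a double induction on $a+b$, or alternatively a direct combinatorial/diagrammatic computation: expand $I_a(f)I_b(g)$ using the iterated-integral representation and the Itô formula / the rule for multiplying a multiple integral by a single stochastic differential, tracking how many of the $a$ "legs" of $f$ get paired (contracted) with legs of $g$. Each choice of an $r$-element matching between the legs of $f$ and the legs of $g$ contributes a factor $r!\binom{a}{r}\binom{b}{r}$ (choose which $r$ legs on each side, times the $r!$ ways to match them) and leaves the integral $I_{a+b-2r}(f\tilde\otimes_r g)$; the off-diagonal (no further pairing) structure of multiple Wiener-Itô integrals is precisely what guarantees that no higher-order terms appear. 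I would present this as: establish the recursion $I_a(f) I_1(g) = I_{a+1}(f\,\tilde\otimes\, g) + a\, I_{a-1}(f\otimes_1 g)$ for $b=1$ directly from the definition, then induct on $b$.

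The main obstacle — or rather the only genuinely delicate point — is the bookkeeping of the combinatorial coefficients and symmetrizations when one passes from the simple recursion ($b=1$) to general $b$: one must verify that the coefficients accumulate exactly to $r!\binom{a}{r}\binom{b}{r}$ rather than some over- or under-counted variant, and that iterated single contractions, after symmetrization, reproduce the $r$-fold contraction $f\tilde\otimes_r g$. This is routine but error-prone, so I would either cite \cite{Nualart} for the precise identity (it is Proposition 1.1.3 there) and merely indicate the rank-one reduction, or carry out the induction carefully with explicit attention to the symmetrization operators. Given that the lemma is stated as a known tool "used plentifully" later, citing \cite{Nualart} with a one-line indication of the Hermite-product reduction is the proportionate choice here.
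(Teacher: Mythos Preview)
Your proposal is correct, and your final recommendation matches the paper exactly: the paper gives no proof of this lemma at all, treating it as a standard preliminary fact from Malliavin calculus (the surrounding section cites \cite{Nualart} for the setup). Your sketch of the Hermite-polynomial reduction and the $b=1$ recursion is a faithful account of the standard proof, but it goes well beyond what the paper does; the proportionate choice you land on---cite \cite{Nualart}, Proposition~1.1.3, and move on---is precisely the paper's own stance.
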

\subsection{Hermite processes}

\begin{definition}\label{Hermite-processes}
Let $m\in \N$, $H \in (\f 1 2,1)$ be given and recall, $H_0= 1+ \f {H-1} {m}$. The class of Hermite processes of rank $m$ is given by the following mean-zero processes,
\begin{equation}\label{definition-Hermite-processes}
Z_t^{H,m}= K(H,m) \int_{\R^m} \int_0^t \prod_{j=1}^m (s-\xi_j)_+^{  H_0 - \f 3 2 } \, ds \,  d  B_{\xi_1} \dots d  B_{\xi_m},
\end{equation}
where the integral over $\R^m$ is to be understood as a multiple Wiener-It\^o integrals, meaning no integration along the diagonals, and the constant $K(H,m)$ is chosen so that their variances are $1$ at $t=1$. The number $H$ is also called Hurst parameter and 
$K(H,m)^2 = \f {H(2H-1) } { B(H_0 - \f 1 2 , 2 -2H_0)},$
where $B$ denotes the beta function, c.f. \cite{Maejima-Ciprian,Taqqu}.
\end{definition}
Hermite processes have stationary increments, are self-similar with exponent $H$,
$$ \lambda^H  Z^{H,m} _{\f \cdot\lambda} \sim  Z^{H,m}_{\cdot} ,$$	
and their covariance is given by 
\begin{equation}
\E[ Z_t^{H,m} Z_s^{H,m}] =  \f 1 2 \left(   t^{2H} + s^{2H} - \vert t-s \vert^{2H}\right).
\end{equation}
They live in the $m^{th}$ Wiener chaos and thus, by hypercontractivity,  have finite moments of all orders. Using Kolmogorv's theorem, one can show that the Hermite processes $Z_t^{H,m}$ have sample paths of H\"older regularity up to  $H$. The rank $1$  Hermite processes $Z^{H, 1}$ are fractional Brownian motions for $H> \f 1 2$, as above definition matches the 
Mandelbrot Van-Ness representation, see (\ref{definition-Hermite-processes}) and \cite{Pipiras-Taqqu}.

In \cite{Taqqu} the following spectral representation for Hermite processes was obtained using a Gau{\ss}ian random spectral measure $\hat{B}_{\hat{\xi}}$, see section \ref{subsubsection-spectral-measure} for a brief summary, 
\begin{equation*}\label{spectral-representation-Hermiteprocesses}
Z^{H,m}_t = \hat K(H,m) \int_{\R^m} \f { e^{i t \sum_{j=1}^m \hat{\xi}_j} -1 } { i \sum_{j=1}^m \hat{\xi}_j} \prod_{j=1}^m \vert \hat{\xi}_j \vert^{H_0 - \f 1 2}  d \hat{B}_{\hat{\xi}_1} \dots d \hat{B}_{\hat{\xi}_m},
\end{equation*}
where $\hat{K}(H,m)$ is chosen such that $ \E \left[ \left(Z^{H,m}_1 \right)^2 \right]=1.
$

\subsubsection{Wiener integrals for Hermite processes }
In \cite{Maejima-Ciprian} Wiener integrals with respect to Hermite processes were introduced. Via an isometry construction it was shown that for $f \in \mathcal{H}$, where $ \mathcal{H} =\{ f : \R \to \R \, :  \int_{\R^2}  f(u) f(v)  \vert u-v\vert^{2H-2} du dv < \infty \}$, the usual Wiener integral approach makes sense. In order to avoid integrability problems, we impose a bit more regularity and restrict ourselves to the space $ \vert \mathcal{H} \vert =\{ f : \R \to \R \, :  \int_{\R^2} \vert f(u) \vert \vert f(v) \vert \vert u-v\vert^{2H-2} du dv < \infty \}$. Note that $L^1(\R,\lambda) \cap L^2(\R,\lambda) \subset \vert \mathcal{H} \vert $, see \cite{Maejima-Ciprian, Pipiras-Taqqu}. On $\mathcal{H}$ the following relation holds,
\begin{align}\label{equation-Hermite-isometry}
\int_{\R} f(s) dZ^{H,m}_s &= K(H,m) \int_{\R^m} \int_{\R} f(s) \prod_{j=1}^m (s- \xi_{j})^{H_0- \f 3 2} ds dB_{\xi_{1}} \dots dB_{\xi_{m}}\\
&= K(H,m) I_m\left( \int_{\R} f(s) \prod_{j=1}^m (s- \xi_{j})^{H_0- \f 3 2} ds\right),
\end{align}
where the integral over $\R^m$ is to be understood as an iterated Wiener integral. By the above and using the identity
\begin{equation}\label{identity-beta-function}
\int_{\R} (u-y)_{+}^{a-1} (v-y)_{+}^{a-1} dy = B(a,2a-1) \vert u-v\vert^{2a-1},
\end{equation}
where $B$ denotes the beta function, one obtains the following relation,
\begin{equation}\label{equaion-L2-isometry}
\E \left[ \int_{\R} f dZ^{H,m} \int_{\R} g dZ^{H,m} \right] = H(2H-1) \int_{\R^2}  f(u)  g(v)  \vert u-v\vert^{2H-2} du dv.
\end{equation}
Furthermore, we denote 
\begin{align*}
\Vert f \Vert_{\mathcal{H}} &= H(2H-1) \int_{\R^2} f(u) f(v) \vert u-v\vert^{2H-2} du dv\\
\Vert f \Vert_{\vert \mathcal{H} \vert } &= H(2H-1) \int_{\R^2} \vert f(u) \vert \vert f(v) \vert \vert u-v\vert^{2H-2} du dv.
\end{align*} 
\subsubsection{Hermite Ornstein-Uhlenbeck processes}
Using this integration theory, the Hermite Ornstein-Uhlenbeck process was introduced in \cite{Maejima-Ciprian}. It is the unique solution to the following SDE, where $\lambda, \sigma >0$,

\begin{equation}\label{definition-HermiteOU}
y_t = y_0 - \lambda \int_0^t y_s ds + \sigma Z^{H,m}_t,
\end{equation}
which is given by
\begin{equation}\label{equation-solution-HermiteOU}
y_t = e^{-\lambda t} \left( y_0 + \sigma \int_0^t e^{\lambda s} d Z^{H,m}_s \right).
\end{equation}
By choosing $y_0 \sim \sigma \int_{-\infty}^0  e^{\lambda s} d Z^{H,m}_s$ one obtains its stationary solution. Moreover, by the above formulae, solutions started with different initial conditions converge exponentially fast towards this stationary solution.

In \cite{Cheridito-Kawaguchi-Maejima} a formulae for the covariance decay in case $m=1$ was obtained. As the covariance does not change in $m$ and $ e^{-(t-s)} \1_{s \leq t} \in L^1(\R) \cap L^2(\R)$ the same formulae also holds true for the Hermite Ornstein-Uhlenbeck processes and is given by

\begin{equation}
\E \left[ y_t y_{t+s}\right] = \f 1 2 \sigma^2 \sum_{n=1}^N \lambda^{-2n} \left( \prod_{j=0}^{2n-1} (2H-j) \right) s^{2H-2n} + O(s^{2H-2N-2}),
\end{equation}
for $s \to \infty$, see \cite{Maejima-Ciprian}. In particular,
\begin{equation}\label{equation-HermiteOU-corr-decay}
\vert \E[ y_t y_{t+s}] \vert \lesssim 1 \wedge s^{2H-2}.
\end{equation}

\subsubsection{Volterra  processes}
For $x \in \mathcal{H}$ we may also define 

\begin{align}
y_t &= \int_0^t x(t-s) dZ^{H,m}_s\\
&= K(H,m) I_m\left(\int_{0}^t x(t-s) \prod_{j=1}^m (s- \xi_{j})^{H_0- \f 3 2} ds\right).
\end{align}
By setting $x(s)= e^{-s}$ we obtain the Hermite Ornstein-Uhlenbeck process with inital value $0$. Furthermore, setting
\begin{align}\label{definition-Volterra-process}
y_t&= \int_{\infty}^t x(t-s) dZ^{H,m}_s\\
&= K(H,m) I_m\left(\int_{-\infty}^t x(t-s) \prod_{j=1}^m (s- \xi_{j})_{+}^{H_0- \f 3 2} ds\right)
\end{align} 
leads to a class of stationary processes, which in case $x=e^{-s}$ equals the stationary Hermite Ornstein-Uhlenbeck process.
In our analysis below we treat processes given as such Volterra integrals and, as in \cite{Diu-Tran}, impose the condition $x \in L^1(\R)$. 
We restrict our analysis to the stationary case, $y_t = \int_{-\infty}^t x(t-s) dZ^{H,m}_s$, though many estimates still hold true without this assumption. Furthermore, we impose the following decay condition on the kernel $x$. This ensures a similar decay of covariances as in the Hermite Ornstein-Uhlenbeck case, see equation \ref{equation-HermiteOU-corr-decay}.
\begin{assumption}\label{assumption-decay-correlation-kernel}
Assume $x \in L^1(\R) \cap \vert \mathcal{H} \vert $ and 
\begin{equation*}
\int_{\R^2} \vert x(t-u) x(t'-v) \vert \vert u-v \vert^{2H-2} du dv  \lesssim 1 \wedge \vert t-t' \vert^{2H-2}.
\end{equation*}
In particular, for $y_t$ defined in equation (\ref{definition-Volterra-process})  this leads to ,
\begin{equation*}
\vert \E [   y_t y_{t'}   ] \vert  \lesssim 1 \wedge \vert t-t' \vert^{2H-2}.
\end{equation*}
\end{assumption}

\begin{remark}\label{remark-assumption-covariance}
In \cite{Nourdin-Nualart-Zintout} amongst other things the case $m=1$ in the short range dependent setting was treated. One of their assumption on the kernel $x$ is the following integrability condition,
$$
\int_{\R} \left( \int_{[0,\infty]^2} x(u) x(v) \vert u-v-a \vert du dv \right)^m da < \infty,
$$
where $m$ denotes the Hermite rank of the function $G$. In case we are as well in the short range dependent regime, using the  notion of the chaos rank of $G$ instead of the Hermite rank,
Assumption \ref{assumption-decay-correlation-kernel} implies this condition. 
\end{remark}

\section{Decomposition and convergence  for  building blocks}
In this section we first decompose each polynomial $(y_t)^k$ into its Wiener chaos components. To do so we apply the product formulae, Lemma \ref{product-formulae}, iteratively and collect all obtained "building blocks"  belonging to the same Wiener chaos. This terms are then at the centre of our investigation as we can obtain the general case by sums of these objects. Next, we analyse the variance growth of these terms to obtain our scaling rate. Finally, we prove convergence in finite dimensional distributions for the rescaled integrals of our building blocks. Here we distinguish, as in the Gau{ss}ian setting, the short and long range dependent regime. In the first one our limit is given by a Wiener process and we make use of the Fourth Moment Theorem to conclude our result. In the latter, the limits are again Hermite processes and, as in \cite{Diu-Tran}, we argue via first rewriting everything as a multiple Wiener It\^o integral with respect to a Gau{\ss}ian complex-valued random spectral measure and then prove $L^2$ kernel convergence.

\begin{remark}
Henceforth we suppress the constants $K(H,m)$ in our notation.
\end{remark}

\subsection{Decomposition}\label{section-decomposition}
Given $y_t= I_m\left(\int_{-\infty}^t x(t-s) \prod_{j=1}^m (s- \xi_{j})_{+}^{H_0- \f 3 2} ds\right)$, we aim to calculate the contribution of $(y_t)^k$ to each distinct Wiener chaos. As this kernel appears again and again we set $f_t =  \int_{-\infty}^t x(t-s) \prod_{j=1}^m (s- \xi_{j})_{+}^{H_0- \f 3 2} ds$. Now, by iteratively applying Lemma \ref{product-formulae}, we obtain,
\begin{align}\label{equation-polynomial-decomposition}
(y_t)^k &= \left( I_m\left(\int_{-\infty}^t x(t-s) \prod_{j=1}^m (s- \xi_{j})_{+}^{H_0- \f 3 2} ds\right) \right)^k\\
&= \left( I_m\left(\int_{-\infty}^t x(t-s) \prod_{j=1}^m (s- \xi_{j})_{+}^{H_0- \f 3 2} ds\right) \right)^{k-2} \left( \sum_{r_1=0}^{m} (r_1)! {m \choose r_1 } {m \choose r_1} I_{2m-2r_1} ( f_t \tilde{\otimes}_{r_1} f_t )  \right)\\
&=  \left( I_m\left(\int_{-\infty}^t x(t-s) \prod_{j=1}^m (s- \xi_{j})_{+}^{H_0- \f 3 2} ds\right) \right)^{k-3}\\
&\left( \sum_{r_1=0}^m (r_1)! {m \choose r_1 } {m \choose r_1 } \sum_{r_2=0}^{\left( 2m-2r_1 \right)\wedge m}  (r_2)! {m \choose r_2 } {2m-2r_1 \choose r_2}  I_{3m-2r_1-2r_2} ( f_t \tilde{\otimes}_{r_1} f_t \tilde{\otimes}_{r_2} f_t ) \right)\\
&= \sum_{r_1, r_2, \dots, r_k} C_1(r_1, \dots, r_k,k,m) I_{km-2\sum_{j=1}^k r_j} ( f_t \tilde{\otimes}_{r_1} f_t  \tilde{\otimes}_{r_2}  \dots \tilde{\otimes}_{r_k} f_t ),
\end{align}
where  $r_1 \leq m $ ,  $r_2 \leq (2m-2r_1) \wedge m$ , $ \dots$, $ r_k \leq \left( km-2\sum_{j=1}^k r_j \right) \wedge m $ and 
$$ 
C_1(r_1, \dots, r_k,k,m) = \prod_{j=1}^k (r_j)! { m \choose r_j} { jm- 2 \sum_{l=1}^{j-1} r_l \choose r_j}
$$ denotes the arising constants.
Henceforward, we denote the tuple  $(r_1, \dots, r_k)$ just by $r$ and set $\delta(k,r)=km-2\sum_{j=1}^k r_j$.
The above calculation shows that we can decompose polynomials into their distinct Wiener chaos parts once we know how to compute the terms $f_t \tilde{\otimes}_{r_1} f_t  \tilde{\otimes}_{r_2}  \dots \tilde{\otimes}_{r_k} f_t$.

Therefore, we now  investigate contractions of $f_t$ with itself a bit more.
In \cite{Diu-Tran} the following was shown,
\begin{align*}
&f_t \otimes_{r_1} f_t(\xi_{1,1}, \dots, \xi_{1,m-r_1},\xi_{2,1}, \dots, \xi_{2,m-r_1}) \\
&= \int_{[-\infty,t]^2}  ds_1  ds_2 x(t-s_1) x(t-s_2)  \prod_{l=1}^{m-r_1} (s_1- \xi_{1,l})_{+}^{H_0- \f 3 2}  (s_2- \xi_{2,l})_{+}^{H_0- \f 3 2}  \int_{\R^{r_1}} d^{r_1} z \prod_{j=1}^{r_1} (s_1-z_j)_{+}^{H_0 - \f 3 2} (s_2-z_j)_{+}^{H_0 - \f 3 2}  \\
&= \int_{[-\infty,t]^2}  ds_1 ds_2 x(t-s_1) x(t-s_2) \prod_{l=1}^{m-r_1} (s_1- \xi_{1,l})_{+}^{H_0- \f 3 2}  (s_2- \xi_{2,l})_{+}^{H_0- \f 3 2}   \left( \int_{\R} dz  (s_1-z)_{+}^{H_0 - \f 3 2} (s_2-z)_{+}^{H_0 - \f 3 2} \right)^{r_1}\\
&=  C_2(H_0)^{r_1} \int_{[-\infty,t]^2} ds_1 ds_2 x(t-s_1) x(t-s_2) \prod_{l=1}^{m-r_1} (s_1- \xi_{1,l})_{+}^{H_0- \f 3 2}  (s_2- \xi_{2,l})_{+}^{H_0- \f 3 2}   \vert s_1-s_2 \vert^{r_1 (2 H_0 -2 )} ,
\end{align*}
where $C_2(H_0) = B(H_0 - \f 1 2, 2- 2H_0)$ and $B$ denotes the beta function, see also Equation \ref{identity-beta-function}. Hence,
\begin{align*}
f_t \tilde{\otimes}_{r_1} f_t &= \f { C_2(H_0)^{r_1}} {(2m-2r_1)!} \sum_{\psi_1 \in \mathcal{S}_{2m-2r_1} } \int_{[-\infty,t]^2}  ds_1 ds_2 x(t-s_1) x(t-s_2) \\
&\prod_{l=1}^{m-r_1} (s_1-\xi_{\psi_1(1,l)})^{H_0 - \f 3 2}_{+} (s_2-\xi_{\psi_1(2,l)})^{H_0 - \f 3 2} _{+}
\vert s_1-s_2 \vert^{r_1 (2- 2H_0 )},
\end{align*}
where $\mathcal{S}_{2m-2r_1}$ denotes the symmetric group of order $2m-2r_1$ and we implicitly make the identifications  $(1,l)=l$ and $(2,l)=m-r_1+l$. From now on we freely use such implicit identifications of indices to lighten the notation. 

When computing the next contraction, $f_t \tilde{\otimes}_{r_1} f_t \otimes_{r_2} f_t$, we face the problem that for different choices of $\psi_1 \in S_{2m-2r_1}$ we eventually integrate different terms. This is due to the fact that contractions use the "last" $r_2$ variables, which is well defined for the symmetric function $f_t \tilde{\otimes}_{r_1} f_t$, however, in each of the summands the notion of the "last" variables depends on the permutation. Nevertheless, we know that exactly $r_2$ $\xi's$ are consumed in the next round and we denote by $r_{2,1}(\psi_1)$ and $r_{2,2}(\psi_1)$ the amount which contracts with the $s_1$ and the $s_2$ terms respectively.
Performing the same calculation as above, we obtain, 
\begin{align*}
&f_t \tilde{\otimes}_{r_1} f_t \otimes_{r_2} f_t =  \f { C_2(H_0)^{r_1}    C_2(H_0)^{r_2} } {(2m-2r_1)!} \sum_{\psi_1 \in \mathcal{S}_{2m-2r_1} } \int_{[-\infty,t]^3}  ds_1 ds_2 ds_3 x(t-s_1) x(t-s_2) x(t-s_3) \\
&\prod_{l=1}^{m-r_1-r_{2,1}(\psi_1)} (s_1-\xi_{\psi_1(1,l)})^{H_0 - \f 3 2}_{+} 
\prod_{l=1}^{m-r_1-r_{2,2}(\psi)} (s_2-\xi_{\psi_1(2,l)})^{H_0 - \f 3 2}_{+}\\
&\prod_{l=1}^{m-r_2} (s_3-\xi_{3,l})^{H_0 - \f 3 2}_{+} \vert s_1-s_2 \vert^{r_1 (2H_0 -2 )} \vert s_1- s_3 \vert^{r_{2,1}(\psi_1)(2H_0 -2 )} \vert s_2-s_3 \vert^{r_{2,2}(\psi_1)(2H_0 -2 )}, 
\end{align*}
together with the algebraic constraint $r_{2,1}(\psi_1) + r_{2,2}(\psi_1) = r_2$. After forming the symmetrization of this expression one ends up with,
\begin{align*}
&f_t \tilde{\otimes}_{r_1} f_t \tilde{\otimes}_{r_2} f_t\\
&= \f { C_2(H_0)^{r_1 } } {(2m-2r_1)!} \f {  C_2(H_0)^{r_2  }} {(3m-2r_1-2r_2)! } \sum_{\psi_1 \in \mathcal{S}_{2m-2r_1} } \sum_{\psi_2 \in \mathcal{S}_{3m - 2 r_1 - 2r_2}}  \\ &\int_{[-\infty,t]^3}  ds_1 ds_2 ds_3 x(t-s_1) x(t-s_2) x(t-s_3) \\
&\prod_{l=1}^{m-r_1-r_{2,1}(\psi_1)} (s_1-\xi_{\psi_2(\psi_1(1,l))})^{H_0 - \f 3 2}_{+}
\prod_{l=1}^{m-r_1- r_{2,2}(\psi)} (s_2-\xi_{\psi_2(\psi_1(2,l))})^{H_0 - \f 3 2}_{+} \prod_{l=1}^{m-r_2} (s_3-\xi_{\psi_2(3,l)})^{(2H_0 -2 )}_{+} \\
& \vert s_1-s_2 \vert^{r_1 (2H_0 -2 )} 
\vert s_1- s_3 \vert^{r_{2,1}(\psi_1)(2H_0 -2 )} \vert s_2-s_3 \vert^{r_{2,2}(\psi_1)(2H_0 -2 )} 
.
\end{align*}

Thus, each time we contract once more, we integrate over one more variable resulting in the gain of an additional kernel, giving rise to the term $x(t-s_1) x(t-s_2) x(t-s_3)$, which does not further interact with anything else. However, the terms $ \prod (s_j - \xi_{\psi_1(j,l)})^{H_0 - \f 3 2}_{+}$ lead to an entanglement of the old variables with the new one, in case the contraction number is greater than $0$. Although this entangling depends on the choice of the permutation, the obtained structure is the same.

If  we perform $k$ contractions of $f_t$ with itself keeping track of all the constants $C_2(H_0)^{\1_{r_1>0}  }$, renormalization factors $ \f {1} {(2m-2r_1)!}$, sums $\sum_{\psi_1 \in \mathcal{S}_{2m-2r_1} }$ as well as the range of the products  is  notationally quite intense. Hence, we assume from now on  that we performed $k$ contractions ,with contraction numbers $r=(r_1,r_2, \dots, r_k)$, giving rise to one object that represents all possible choices,   $\psi \in \mathcal{S}_r =  \mathcal{S}_{2m-2r_1} \times \mathcal{S}_{3m-2r_1-2r_2} \dots  \times \mathcal{S}_{km -2 \sum_{j=1}^k r_j}$ and denote the arising constants by $C_3(\psi,r,H_0)$. Furthermore, we introduce constants $M_j(\psi)$  and $\beta_{j,q}(\psi)$ to keep track of the range of the products as well as the exponent of $\vert s_j-s_q\vert$ respectively.
The expression we then end up with looks more structured and we summarize the conclusions of the above discussion in the following Lemma.
\begin{lemma}\label{lemma-contractions-calculated}
Given a kernel $x$ such that, $f_t =  \int_{-\infty}^t x(t-s) \prod_{j=1}^m (s- \xi_{j})_{+}^{H_0- \f 3 2} ds$ belongs to $L^2\left(\R^m,\lambda\right)$ and contraction numbers $r=(r_1, \dots, r_k),$ where  $r_1 \leq m $ ,  $r_2 \leq \left(2m-2r_1\right) \wedge m$ , $ \dots$, $ r_k \leq \left( km-2\sum_{j=1}^k r_j \right) \wedge m $, then, the following identity holds,
\begin{align}
&f_t \tilde{\otimes}_{r_1} f_t \tilde{\otimes}_{r_2} f_t \dots \tilde{\otimes}_{r_k} f_t\\ 
&=   \sum_{\psi \in \mathcal{S}_r} C_3(\psi,r,H_0) \int_{[-\infty,t]^k}  d^k s \prod_{j=1}^k x(t-s_j) 
\prod_{l=1}^{M_j(\psi)} (s_j-\xi_{\psi(j,l)})^{H_0 - \f 3 2} 
\prod_{q=1}^{j-1 }\vert s_j - s_q \vert^{\beta_{j,q}(\psi) (2H_0 -2 )}, \\
\end{align}
together with the algebraic constraints,
\begin{align*}
\sum_{q=1}^{j-1} \beta_{j,q}(\psi) = r_j, \, \, \, \, \, \, \, \, \, \, \, \, 
\sum_{j=1}^k M_j(\psi) = km - 2 \sum_{j=1}^k r_j = \delta(k,r),
\end{align*}
independently of the choice of $\psi \in \mathcal{S}_r$.
\end{lemma}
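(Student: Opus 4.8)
The plan is to prove Lemma \ref{lemma-contractions-calculated} by induction on $k$, the number of contractions performed, tracking at each stage exactly which combinatorial data (the permutation $\psi$, the product ranges $M_j$, the pair-exponents $\beta_{j,q}$) survives. The base case $k=1$ is precisely the computation of $f_t \tilde\otimes_{r_1} f_t$ already carried out above in the text: it produces $\mathcal{S}_{2m-2r_1}$, one renormalization constant $\f1{(2m-2r_1)!}$, one power $C_2(H_0)^{r_1}$, and the single pair-exponent $\beta_{2,1}=r_1$, with $M_1 = M_2 = m-r_1$; this matches the claimed form with $C_3(\psi,r,H_0)$ absorbing $C_2(H_0)^{r_1}/(2m-2r_1)!$ and the constraints $\sum_{q<j}\beta_{j,q}=r_j$, $\sum_j M_j = 2m-2r_1 = \delta(1,r)$ visibly satisfied.

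For the inductive step, suppose the identity holds after $k-1$ contractions, so $f_t\tilde\otimes_{r_1}\!\cdots\tilde\otimes_{r_{k-1}}f_t$ has the displayed shape. I would then contract this symmetric function once more, with contraction number $r_k$, against a fresh copy of $f_t$. Because the current function is a sum over $\psi$ of symmetrized terms, the $r_k$ integration variables $z_1,\dots,z_{r_k}$ land — depending on $\psi$ — among the various $s_j$-blocks; write $r_{k,j}(\psi)$ for the number landing in the $s_j$-block, so $\sum_{j=1}^{k-1} r_{k,j}(\psi) = r_k$. Each such variable, after being integrated out against the corresponding $(s_j - z)_+^{H_0-3/2}$ factor coming from the new $f_t$'s kernel $\prod (s_k - \xi_\ell)_+^{H_0-3/2}$, contributes via identity (\ref{identity-beta-function}) a factor $C_2(H_0)|s_j - s_k|^{2H_0-2}$; collecting these over the $r_{k,j}(\psi)$ variables in block $j$ gives the new pair-exponent $\beta_{k,j} = r_{k,j}(\psi)$ and an overall factor $C_2(H_0)^{r_k}$. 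The new kernel $x(t-s_k)$ appears and does not interact further, the product ranges shrink from $M_j$ to $M_j - r_{k,j}(\psi)$ for $j<k$ with $M_k = m - r_k$, and symmetrizing over the now-$\delta(k,r)$-many surviving $\xi$'s introduces the extra symmetric group factor $\mathcal{S}_{\delta(k,r)}$ and renormalization $1/\delta(k,r)!$. Relabelling the composite permutation data as a single $\psi \in \mathcal{S}_r = \prod_{j=2}^{k}\mathcal{S}_{\delta(j-1,r)+ (m-r_j)}$ (equivalently the product written in the statement) and folding all the beta-function constants and factorials into $C_3(\psi,r,H_0)$ yields exactly the claimed expression. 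The two algebraic constraints propagate automatically: $\sum_{q<k}\beta_{k,q} = \sum_j r_{k,j}(\psi) = r_k$ by construction, and $\sum_{j=1}^k M_j = \big(\sum_{j=1}^{k-1}(M_j^{\text{old}} - r_{k,j})\big) + (m-r_k) = \delta(k-1,r) - r_k + m - r_k = \delta(k,r)$.

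The main obstacle is bookkeeping rather than conceptual: one must be careful that the ``last $r_k$ variables'' used by the raw (unsymmetrized) contraction $\tilde\otimes_{r_k}$ are not canonically attached to a single $s_j$-block, so the decomposition into the $r_{k,j}(\psi)$ is genuinely $\psi$-dependent, and one has to check that \emph{every} resulting term nonetheless fits the single template with the stated constraints holding \emph{independently of} $\psi$ — which is exactly why the Lemma is phrased as an existence statement for constants $M_j(\psi),\beta_{j,q}(\psi)$ rather than an explicit formula. A secondary point requiring a line of justification is that $f_t \in L^2(\R^m,\lambda)$ guarantees all the intermediate contractions are well-defined elements of the appropriate $L^2$ spaces so that the product formula Lemma \ref{product-formulae} and the Fubini manipulations are legitimate; this follows from the hypothesis together with the standard fact that contractions of $L^2$ kernels are $L^2$ (see \cite{Nualart}). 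I would not belabor the explicit value of $C_3$, since only its finiteness and $t$-independence matter downstream.
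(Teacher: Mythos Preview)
Your proposal is correct and follows essentially the same route as the paper: the paper carries out the computations for $f_t\tilde\otimes_{r_1}f_t$ and $f_t\tilde\otimes_{r_1}f_t\tilde\otimes_{r_2}f_t$ explicitly, observes the emerging pattern (the new kernel $x(t-s_j)$, the $\psi$-dependent splitting of $r_j$ into the $\beta_{j,q}$'s, the updated product ranges $M_j$), and then states the Lemma as a summary; your induction simply formalizes this. One minor slip: the symmetric-group factor after the $j$-th contraction has order $\delta(j-1,r)+m-2r_j$, not $\delta(j-1,r)+(m-r_j)$ as you wrote, but this does not affect the argument.
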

Hence, to decompose $(y_t)^k$ into its distinct Wiener chaos parts it is only left to collect all terms $f_t \tilde{\otimes}_{r_1} f_t \tilde{\otimes}_{r_2} f_t \dots \tilde{\otimes}_{r_k} f_t$ which give the same value for $\delta(k,r)= km - 2 \sum_{j=1}^k r_j $.
To simplify our notation we set $g^{k,r}_t = f_t \tilde{\otimes}_{r_1} f_t \tilde{\otimes}_{r_2} f_t \dots \tilde{\otimes}_{r_k} f_t$ and 
$h^{d,k}_t = \sum_{ \{r : \delta(k,r) =d\} } C_1(r,k,m)g^{k,r}_t$.
Now, the terms $h^{d,k}_t$ equal the projection of $(y_t)^k$ onto the $d^{th}$ Wiener chaos, 
thus, by Equation \ref{equation-polynomial-decomposition},
\begin{align*}
(y_t)^k &= \sum_{d=0}^{km} I_d \left( \sum_{ \{ r:  \delta(k,r) =d \} } C_1(r,k,m) g^{k,r}_t \right)\\
&= \sum_{d=0}^{km} I_d(h^{d,k}_t) 
\end{align*}

\subsection{Examining the scaling behaviour}\label{section-variance-computation}

In this section we bound the variance growth of terms of the form $ \int_0^{\f T \epsilon} h^{d,k}_t dt$ to obtain our scaling rate. Typically, one expects that this growth  decreases with $d$ and after some critical value stays at the Wiener scaling as long as no cancellation occurs, see  also Remark \ref{remark-assumption-covariance}and \cite{Taqqu,Taqqu_sums,Gehringer-Li-homo}. This change of behaviour appears due to the algebraic decay of the correlations proved in this section. Terms in the $d^{th}$ chaos admit a decay of the form $ 1 \wedge s^{d (2H_0-2)}$. Thus, $\int_0^{\f 1 \epsilon} 1 \wedge s^{d (2H_0-2)} ds$  either converges as $\epsilon \to 0$, if $d$ is large enough, or diverges at rate $\epsilon^{-d(2H_0-2)+1}$ leading to the change of behaviour.

\begin{lemma}\label{wachstum-konstanten}
Given $h^{d,k}_t$ as defined above  for a kernel $x$ satisfying Assumption \ref{assumption-decay-correlation-kernel}, then,  the following estimate holds,
$$
\Big \vert \E \left[ I_d (h^{d,k}_t) I_{d'}( h^{d',k'}_{t'}) \right] \Big \vert  \lesssim  C_4(k,k',m,d) \left( 1 \wedge \vert t- t'\vert^{(2H_0-2)d} \right),
$$
where $C_4(k,k',m,d) = \f {\sqrt{(km)!(k'm)!} (m!)^{(k+k')} (k+k')^m ((k+k')m)^2 m^{k+k'} \mathfrak{L}^{(k+k')m}} {d^2}$ and $\mathfrak{L} = C_2(H_0) +3 + \Vert x \Vert_{\mathcal{H}} + K(H,m)$.
\end{lemma}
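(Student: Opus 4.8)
The plan is to express the left-hand side as an $L^2$ inner product of symmetric kernels, integrate out the $\xi$-variables of the Hermite representation, and then estimate the resulting singular multivariate integral by means of Assumption~\ref{assumption-decay-correlation-kernel}, keeping every constant explicit. Since each $h^{d,k}_t$ is symmetric (it is a sum of the symmetrised contractions $g^{k,r}_t$), the Wiener--It\^o isometry gives
$$\E\big[I_d(h^{d,k}_t)I_{d'}(h^{d',k'}_{t'})\big]=\delta_{d,d'}\,d!\,\big\langle h^{d,k}_t,h^{d,k'}_{t'}\big\rangle_{L^2(\R^d,\lambda)},$$
so it suffices to bound the pairing for $d=d'$; note that $d\le\min(km,k'm)$, hence already $d!\le\sqrt{(km)!\,(k'm)!}$. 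Writing $h^{d,k}_t=\sum_{\{r:\delta(k,r)=d\}}C_1(r,k,m)\,g^{k,r}_t$ and inserting the explicit formula for $g^{k,r}_t=f_t\tilde\otimes_{r_1}f_t\cdots\tilde\otimes_{r_k}f_t$ from Lemma~\ref{lemma-contractions-calculated}, I would pass to absolute values immediately: replacing $x$ by $\vert x\vert$ and using that every factor $(s_j-\xi_l)_+^{H_0-3/2}$ and $\vert s_i-s_j\vert^{\beta(2H_0-2)}$ is non-negative, the triangle inequality over the tuples $r,r'$ (with $\delta(k,r)=\delta(k',r')=d$), over $\psi\in\mathcal S_r$, $\psi'\in\mathcal S_{r'}$, and over the remaining indices reduces everything to a sum of non-negative integrals.

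Next I would integrate out the auxiliary variables. In $\langle g^{k,r}_t,g^{k',r'}_{t'}\rangle$ each of the $d$ variables $\xi_i$ occurs in exactly one factor $(s_a-\xi_i)_+^{H_0-3/2}$ from the $t$-block and one factor $(s'_b-\xi_i)_+^{H_0-3/2}$ from the $t'$-block, so the Beta identity~(\ref{identity-beta-function}) gives $\int_\R(s_a-\xi)_+^{H_0-3/2}(s'_b-\xi)_+^{H_0-3/2}\,d\xi=C_2(H_0)\,\vert s_a-s'_b\vert^{2H_0-2}$. Performing all $d$ of these integrations produces a factor $C_2(H_0)^d$ and leaves an integral over the $k+k'$ variables $s_1,\dots,s_k,s'_1,\dots,s'_{k'}$ of $\prod_j\vert x(t-s_j)\vert\prod_{j'}\vert x(t'-s'_{j'})\vert$ times a product of factors $\vert s_i-s_j\vert^{\nu(2H_0-2)}$ internal to each block and cross factors $\prod_{a,b}\vert s_a-s'_b\vert^{\mu_{ab}(2H_0-2)}$; by Lemma~\ref{lemma-contractions-calculated}, $\sum_{a,b}\mu_{ab}=d$ and all multiplicities are at most $m$, so every exponent lies in $[m(2H_0-2),0]=[2H-2,0]$, in particular strictly above $-1$.

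It then remains to bound, uniformly over all this combinatorial data, an integral of the form
$$\int_{(-\infty,t]^k\times(-\infty,t']^{k'}}\prod_j\vert x(t-s_j)\vert\prod_{j'}\vert x(t'-s'_{j'})\vert\prod_{\mathrm{within}}\vert s_i-s_j\vert^{\nu_{ij}(2H_0-2)}\prod_{\mathrm{cross}}\vert s_a-s'_b\vert^{\mu_{ab}(2H_0-2)}\ \lesssim\ 1\wedge\vert t-t'\vert^{d(2H_0-2)}.$$
For $\vert t-t'\vert\le1$ one bounds the integral by a constant depending only on $k,k',m$, using $x\in L^1(\R)\cap\vert\mathcal H\vert$ together with the local integrability of all singular factors; it is here that the $\vert\mathcal H\vert$-integrals generate the $\Vert x\Vert_{\mathcal H}$ (and $K(H,m)$) contributions to $\mathfrak L$. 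For $\vert t-t'\vert>1$ one exploits Assumption~\ref{assumption-decay-correlation-kernel}: since $\sum_{a,b}\mu_{ab}=d$ and $m(2H_0-2)=2H-2$, one integrates out all but a spanning family of the $s$- and $s'$-variables against the $\vert x\vert$-kernels using $\vert\mathcal H\vert$-bounds and $\Vert x\Vert_{L^1}$, and extracts one factor $\vert t-t'\vert^{2H_0-2}$ per unit of remaining cross-decay from a suitably iterated form of the displayed decay inequality, for a total of $\vert t-t'\vert^{d(2H_0-2)}$.

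Finally one assembles the constant. The sum over $\psi\in\mathcal S_r$ is harmless, since the renormalisation denominators $1/\big((2m-2r_1)!\,(3m-2r_1-2r_2)!\cdots\big)$ inside $C_3(\psi,r,H_0)$ cancel exactly against $\vert\mathcal S_r\vert=(2m-2r_1)!\,(3m-2r_1-2r_2)!\cdots$, leaving only a power of $C_2(H_0)$; the constant $C_1(r,k,m)$ is bounded by $(m!)^k$ times binomial factors, which, together with the number of tuples $r$ with $\delta(k,r)=d$ and the factor $d!$, are absorbed into $(m!)^{k+k'}\sqrt{(km)!\,(k'm)!}$; the $O((k+k')m)$ quantities of type $C_2(H_0)$, $\Vert x\Vert_{L^1}$, $\Vert x\Vert_{\mathcal H}$, $K(H,m)$ produced along the way are dominated by $\mathfrak L^{(k+k')m}$, using that $\mathfrak L$ is a sum containing the slack summand $3$; and a careful count of the remaining polynomial contributions, rewritten using $d\le(k+k')m$, gives the prefactor $((k+k')m)^2/d^2$, which is the form one needs for later summations over the chaos order $d$. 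The step I expect to be the genuine obstacle is the multivariate singular-integral estimate of the third paragraph: one must obtain the decay exponent $d(2H_0-2)$ \emph{uniformly} over every entanglement pattern of within- and cross-factors permitted by Lemma~\ref{lemma-contractions-calculated}, while keeping all constants explicit so that the final bound has exactly the advertised shape; everything else is lengthy but essentially routine bookkeeping.
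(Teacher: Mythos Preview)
Your overall architecture matches the paper's proof almost exactly: reduce to $d=d'$ by orthogonality, pass from $h^{d,k}_t$ to the building blocks $g^{k,r}_t$, use the Wiener--It\^o isometry, integrate out the $\xi$-variables via the Beta identity~(\ref{identity-beta-function}) to obtain a multivariate integral over $s_1,\dots,s_k,s'_1,\dots,s'_{k'}$ with within-block factors $\vert s_j-s_q\vert^{\beta_{j,q}(2H_0-2)}$ and cross factors $\vert s_j-s'_{j'}\vert^{\gamma_{j,j'}(2H_0-2)}$ satisfying $\sum_{j,j'}\gamma_{j,j'}=d$, and finally assemble the constant. Your bookkeeping of the constants is also essentially the paper's.

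The one place your proposal is genuinely incomplete is precisely the step you flag as the obstacle, and the paper does have a concrete mechanism there which you are missing. The paper does \emph{not} split into $\vert t-t'\vert\le1$ and $\vert t-t'\vert>1$, nor does it argue via a spanning family; instead it uses an iterated application of the generalised H\"older inequality, integrating out the $s$- and $s'$-variables one at a time. The key observation is that each node $s_j$ carries exactly $m$ edges in total (within-block $\beta$'s plus cross $\gamma$'s), so $\sum_{j'}\tfrac{\gamma_{k,j'}}{m}+\sum_q\tfrac{\beta_{k,q}}{m}=1$, and one may apply H\"older with these exponents to the $ds_k$-integral after distributing the kernel $\vert x(t-s_k)\vert$ accordingly. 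This trades each cross factor $\vert s_k-s'_{j'}\vert^{\gamma_{k,j'}(2H_0-2)}$ for $\bigl(\int\vert x(t-s_k)x(t'-s'_{j'})\vert\,\vert s_k-s'_{j'}\vert^{m(2H_0-2)}\,ds_k\bigr)^{\gamma_{k,j'}/m}$ and each within-block factor for the analogous expression, \emph{preserving the exponents}. Iterating this ``mince knife'' over all nodes (alternating between the $s$- and $s'$-blocks) completely factorises the integral: within-block edges each contribute a power of $\Vert x\Vert_{\vert\mathcal H\vert}$, while every cross edge contributes a factor $\bigl(\int\!\!\int\vert x(t-u)x(t'-v)\vert\,\vert u-v\vert^{2H-2}\,du\,dv\bigr)^{1/m}\lesssim(1\wedge\vert t-t'\vert^{2H-2})^{1/m}$ directly from Assumption~\ref{assumption-decay-correlation-kernel}. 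Since the cross edges have total multiplicity $d$, the product gives $(1\wedge\vert t-t'\vert^{2H-2})^{d/m}=1\wedge\vert t-t'\vert^{d(2H_0-2)}$, uniformly over every entanglement pattern. This is the missing idea; once you have it, the rest of your plan goes through as written.
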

\begin{proof}
In case $d \not = d'$ the expectation is $0$ by orthogonality of different Wiener chaoses. Thus, henceforward we assume $d=d'$.
Due to the decomposition,
\begin{align*}
\E \left[ I_d \left(h^{d,k}_t \right) I_d\left( h^{d,k'}_{t'}\right) \right] &= \sum_{ \{r : \delta(k,r) =d\} } C_1(r,k,m) \sum_{ \{r' : \delta(k',r') =d\} } C_1(r',k',m) \E \left[ I_d\left(g^{k,r}_t \right) I_d\left(g^{k',r'}_{t'}\right)  \right] 
\end{align*}
and the finiteness of both sums, we may restrict ourselves to the analysis of the behaviour of $\E \left[ I_d\left(g^{k,r}_t \right) I_d\left(g^{k',r'}_{t'}\right)  \right] $ as long as our bounds only depend on $d,k,k',m$ and $H_0$. By the Wiener-It\^o isometry $\E \left[ I_d\left(g^{k,r}_t \right) I_d\left(g^{k',r'}_{t'}\right)  \right] = d! \< g^{k,r}_t , g^{k',r'}_{t'} \>_{L^2(\R^d)}$ which is equivalent to computing the $d^{th}$ contraction between $g^{k,r}_t$ and $ g^{k',r'}_{t'}$. As both terms arise from iterated contractions of $f$ we may argue similarly as above to find a good expression for their $L^2$ norm. We treat the behaviour of multiplicate constants separately, thus, we suppress them in the following calculations. We have,
\begin{align*}
\E \left[ I_d(g^{k,r}_t ) I_d(g^{k',r'}_{t'})  \right] &= \int_{\R^d} d^d\xi   \sum_{\psi \in \mathcal{S}_r}  \int_{[-\infty,t]^k}  d^k s \prod_{j=1}^k x(t-s_j) 
\prod_{l=1}^{M_j(\psi)} (s_j-\xi_{\psi(j,l)})^{H_0 - \f 3 2} 
\prod_{q=1}^{j-1 }\vert s_j - s_q \vert^{\beta_{j,q}(\psi) (2H_0 -2 )}\\
&\sum_{\psi' \in \mathcal{S}_{r'}}  \int_{[-\infty,t']^{k'}}  d^{k'} s' \prod_{j'=1}^{k'} x(t'-s'_{j'}) 
\prod_{l'=1}^{M_{j'}(\psi')} (s'_{j'}-\xi_{\psi'(j',l')})^{H_0 - \f 3 2} 
\prod_{q'=1}^{j'-1 }\vert s'_{j'} - s'_{q'} \vert^{\beta'_{j',q'}(\psi') (2H_0 -2 )}.  \\
\end{align*}

When we  computed  $g^{k,r}_t = f_t \tilde{\otimes}_{r_1} f_t \tilde{\otimes}_{r_2} f_t \dots \tilde{\otimes}_{r_k} f_t$ we saw that, depending on the choice of permutations $\psi$, two variables $s_{j} $ and $s_{q}$ either interact with each other or not  leading to  the terms $\vert s_j -s_q \vert^{\beta_{j,q}(2H_0-2)}$, importantly, their exponents need to satisfy certain algebraic constraints. We may view each of the $k$ variables $s_j , j=1, \dots k$ as the node of a graph with $m$ "degrees of freedoms". Entangling two variables, $s_j$ and $s_q$, can be seen as taking away this freedoms and connecting them with $\beta_{j,q}$ edges. Once a node has $m$ connections it can no longer interact with any other variables in following contractions. 

However, in case $\delta(k,r)=d$ there are in total $d$ freedoms left, no matter which permutation we consider. When computing the variance of this objects we viewed this as having two, possibly distinct, graphs,  with $k$ and $k'$ nodes respectively such that each graph has $d$ "degrees of freedom" left. What is now left to do, is  to literally connect the dots to obtain the formulae below by the same calculations as in section \ref{section-decomposition}. This graph analogy will be used over and over and provides a good picture of what is going on. The exponents $\gamma_{j,j'}$ now depend on both permutations and denote how many edges are connecting the node representing $s_j$ to the one representing $s'_{j'}$. We again obtain constants of the form $C_2(H_0)$ which we, together with the constants previously obtained,  suppress in the following computations. We end up with,

\begin{align*}
\E \left[ I_d\left(g^{k,r}_t \right) I_d\left(g^{k',r'}_{t'}\right)  \right] &=  \sum_{\psi \in \mathcal{S}_r, \psi' \in \mathcal{S}_{r'}} \int_{[-\infty,t']^{k'}} d^{k'}s' \int_{[-\infty,t]^{k}} d^k s \prod_{j=1}^k \prod_{ j'=1}^{k'} x(t-s_j) x(t'-s_{j'}')  \\
& \vert s_j - s_{j'}'\vert^{\gamma_{j,j'}(\psi,\psi') (2H_0 -2)} \prod_{q=1}^{j-1} \vert s_j - s_q \vert^{\beta_{j,q}(\psi) (2H_0 -2)} \prod_{q'=1}^{j'-1}  \vert s_{j'}' - s_{q'}' \vert^{\beta_{j',q'}'(\psi') (2H_0 -2)}, 
\end{align*}
and, by the triangle inequality,
\begin{align*}
\Big \vert \E \left[ I_d\left(g^{k,r}_t \right) I_d\left(g^{k',r'}_{t'}\right)  \right] \Big \vert &\leq  \sum_{\psi \in \mathcal{S}_r, \psi' \in \mathcal{S}_{r'}} \int_{[-\infty,t']^{k'}} d^{k'} s' \int_{[-\infty,t]^{k}} d^k s \prod_{j=1}^k \prod_{ j'=1}^{k'}\vert  x(t-s_j) x(t'-s_{j'}') \vert \\
&  \vert s_j - s_{j'}'\vert^{\gamma_{j,j'}(\psi,\psi') (2H_0 -2)} \prod_{q=1}^{j-1} \vert s_j - s_q \vert^{\beta_{j,q}(\psi) (2H_0 -2)} \prod_{q'=1}^{j'-1}  \vert s_{j'}' - s_{q'}' \vert^{\beta_{j',q'}'(\psi') (2H_0 -2)}.
\end{align*}
From now on we also suppress dependencies on the permutations $\psi$ and $\psi'$. Furthermore, we focus on one generic summand and rely on arguments independent of the specific choice of permutations.
Using the above introduced graph analogy we observe that $\sum_{j'} \gamma_{k,j'} + \sum_{j} \beta_{k,j} =m$ as each node has exactly $m$ connections either within its own graph, the $\beta$'s, or with the other one, the $\gamma$'s, independently of the choice of permutations. Looking at the integral with respect to $s_k$ and pulling the kernels $x(t'-s'_{j'})$ and $ x(t-s_j)$, with the right exponents according to the calculation below, into the integral, we may apply the generalized H\"older inequality, as $ \sum_{j'} \f {\gamma_{k,j'}} {m}  +  \sum_{j} \f {\beta_{k,j}} {m}  =1$ and obtain,
\begin{align*}
&\int_{-\infty}^t d s_k \prod_{j'=1}^{k'} \vert x(t-s_k)^{\f {\gamma_{k,j'}} {m}} \vert \vert  x(t'-s_{j'}')\vert^{\f {\gamma_{k,j'}} {m}}   \vert s_k - s_{j'}'\vert^{\gamma_{k,j'} (2H_0 -2)} \prod_{q=1}^{k-1} \vert x(t-s_k)^{\f {\beta_{k,q}} {m}} \vert \vert x(t-s_q)^{\f {\beta_{k,q}} {m}} \vert \vert s_k - s_q \vert^{\beta_{k,q} (2H_0 -2)}  \\
&\leq \prod_{j'=1}^{k'} \left( \int_{-\infty}^t d s_k  \vert x(t-s_k) x(t'-s_{j'}') \vert \vert s_k - s_{j'}'\vert^{m (2H_0 -2)} \right)^{\f {\gamma_{k,j'}} {m}}  \prod_{q=1}^{k-1}  \left( \int_{-\infty}^t d s_k \vert x(t-s_k)  x(t-s_q) \vert  \vert s_k - s_q \vert^{m (2H_0 -2)} \right)^{\f {\beta_{k,q}} {m}}.
\end{align*}
Philosophically, H\"olders inequality enables us to factorize the integral with respect to $s_k$ by trading the terms $ \vert x(t'-s_{j'}')\vert^{\f {\gamma_{k,j'}} {m}} \vert $ for $\left( \int_{-\infty}^t d s_k  \vert x(t-s_k) x(t'-s_{j'}') \vert \vert s_k - s_{j'}'\vert^{m (2H_0 -2)} \right)^{\f {\gamma_{k,j'}} {m}}$, the terms  $\vert  x(t-s_q)\vert^{\f {\beta_{k,q}} {m}}  $ for $\left( \int_{-\infty}^t d s_k  \vert x(t-s_k)  x(t-s_q) \vert  \vert s_k - s_q \vert^{m (2H_0 -2)} \right)^{\f {\beta_{k,q}} {m}}$ and getting rid of the terms $\vert s_k - s_{j'}'\vert^{\gamma_{k,j'} (2H_0 -2)} $ as well as $\vert s_k - s_{j}\vert^{\beta_{k,j} (2H_0 -2)} $. Note that the exponents stay the same, meaning we trade a term with exponent ${\f {\gamma_{k,j'}} {m}}$ for one which again has exponent ${\f {\gamma_{k,j'}} {m}}$ and similarly for the $\beta$ terms. Hence, looking at the $s'_{k'}$ term we find the same situation as for the $s_k$ terms, at least exponent wise and we may repeat the procedure all over again.

When looking at the graph picture this procedure is a bit like cutting with a mince knife. We start at the top of the first graph and "cut" $s_k$ out of the picture, then, we move to the other side and cut $s'_{k'}$. As when cutting herbs we now again move to the other side with our knife and tackle $s_{k-1}$ followed by $s'_{k'-1}$ and so on until we have cut all edges and,  hopefully, what remains is less entangled and easier to deal with.
Another useful visualisation is that as soon as the knife hits a node with $m$ edges it splits into $m$ nodes which each has one edge. This alone does not help as at first the other end of the edge is possibly still entangled, but as soon as the mince knife hits the other node we obtain a simple integral, corresponding to that edge. In the end we obtain a product over all these integrals.

Picking up our mince knife, if we look at the terms which include $s'_{k'}$, again after pulling in other kernels with the right exponents,	we end up with,
\begin{align*}
&\int_{-\infty}^{t'} d s'_{k'} \prod_{j=1}^{k-1} \vert x(t'-s'_{k'})\vert^{1- \f {\gamma_{k',k}} {m}} \vert x(t-s_{j})\vert^{\f {\gamma_{k',j}} {m}}  \vert s'_{k'} - s_{j}\vert^{\gamma_{k',j} (2H_0 -2)} \prod_{q'=1}^{k'-1} \vert x(t'-s'_{k'})\vert^{\f {\beta_{k',q'}} {m}} \\ & \vert x(t'-s'_{q'})\vert^{\f {\beta_{k',q'}} {m}}  \vert s'_{k'} - s'_{q'} \vert^{\beta'_{k',q'} (2H_0 -2)} \left( \int_{-\infty}^t d s_k  \vert x(t-s_k) x(t'-s_{k'}') \vert\vert s_k - s_{k'}'\vert^{m (2H_0 -2)} \right)^{\f {\gamma_{k,k'}} {m}}. 
\end{align*}
Note that we have already used up some part of the $x(t'-s'_{k'})$ kernel leading to the ${1- \f {\gamma_{k',k}} {m}}$ exponent. As indicated above the sum of the exponents has the same structure, hence we obtain,
\begin{align*}
&\int_{-\infty}^{t'} d s'_{k'} \prod_{j=1}^{k-1} \vert x(t'-s'_{k'})\vert^{1- \f {\gamma_{k',k}} {m}}  \vert x(t-s_{j})\vert^{\f {\gamma_{k',j}} {m}} \vert s'_{k'} - s_{j}\vert^{\gamma_{k',j} (2H_0 -2)} \prod_{q'=1}^{k'-1} \vert x(t'-s'_{k'})\vert^{\f {\beta_{k',q'}} {m}}  \vert s'_{k'} - s'_{q'} \vert^{\beta'_{k',q'} (2H_0 -2)}\\ &\left( \int_{-\infty}^t d s_k \vert x(t-s_k) x(t'-s_{k'}') \vert \vert s_k - s_{k'}'\vert^{m (2H_0 -2)} \right)^{\f {\gamma_{k,k'}} {m}} \\
&\leq \left( \int_{-\infty}^{t'} ds'_{k'} \int_{-\infty}^{t} ds_k \vert  x(t-s_k) x(t'-s_{k'}') \vert \vert s_k - s_{k'}'\vert^{m (2H_0 -2)} \right)^{\f {\gamma_{k,k'}} {m}}\\ &\prod_{j=1}^{k-1} \left( \int_{-\infty}^{t'} ds'_{k'} \vert x(t'-s'_{k'}) x(t-s_j) \vert \vert s_j - s_{k'}'\vert^{m (2H_0 -2)} \right)^{\f {\gamma_{k',j}} {m}} \prod _{q'=1}^{k'-1} \left( \int_{-\infty}^{t'} ds'_{k'} \vert  x(t'-s'_{k'}) \vert  \vert s'_{k'} - s'_{q'} \vert^{m (2H_0 -2)} \right)^{\f {\beta'_{k',q'}} {m}}.
\end{align*}
The term 
$$\int_{-\infty}^{t'} ds'_{k'} \int_{-\infty}^{t} ds_k  \vert x(t-s_k) x(t'-s_{k'}')  \vert \vert s_k - s_{k'}'\vert^{m (2H_0 -2)}
$$
now obeys the decay imposed in Assumption \ref{assumption-decay-correlation-kernel} and we obtained it to the exponent $\gamma_{k,k'}$. Cutting of $s_{k-1}$ we  obtain, in particular, the term,
$$\left( \int_{[-\infty,t]^2}  ds_k ds_{k-1}  \vert x(t-s_k) x(t-s_{k-1}) \vert \vert s_k - s_{k-1}\vert^{m (2H_0 -2)} \right)^{\beta_{k,k-1} (2H_0-2)}.
$$
This term behaves like a power of $\Vert x \Vert_{\vert \mathcal{H} \vert }$, thus, the term only contributes a constant and we do not gain any decay from it.
Summarizing, we end up with the following picture, edges within a graph do not lead to any decay, however, the ones between the graphs do with a rate given by powers of $$ \int_{-\infty}^{t'} ds'_{k'} \int_{-\infty}^{t} ds_k  \vert x(t-s_k) x(t'-s_{k'}')  \vert \vert s_k - s_{k'}'\vert^{m (2H_0 -2)}.$$
As the $\gamma$'s represent the edges between the graphs and there are exactly $d$ of them, $\sum_{j=1}^k \sum_{j'=1}^{k'} \gamma_{j,j'} =d$, we obtain,
\begin{align*}
\Big \vert \E \left[ I_d\left(g^{k,r}_t \right) I_d\left(g^{k',r'}_{t'}\right)  \right] \Big \vert &\lesssim
\prod_{j =1}^{k} \prod_{j' =1}^{k'} \left( \int_{-\infty}^{t'} dv \int_{-\infty}^{t} du   \vert x(t-u) x(t'-v) \vert \vert u - v\vert^{m (2H_0 -2)} \right)^{ \f  {\gamma_{j,j'}} {m} }\\
&= \left( \int_{-\infty}^{t'} dv \int_{-\infty}^{t} du  \vert x(t-u) x(t'-v) \vert \vert u - v\vert^{m (2H_0 -2)} \right)^{\f d m}\\
&\lesssim  \left( 1 \wedge \vert t-t' \vert^{m (2H_0 -2)} \right)^{ \f d m} \\
&\lesssim 1 \wedge \vert t-t' \vert^{ (2H_0 -2)d},
\end{align*}
by Assumption \ref{assumption-decay-correlation-kernel}.

It is left to prove the bound on the proportionality constant.
We picked up constants corresponding to powers of $ \Vert x \Vert_{\vert \mathcal{H} \vert} $ from the edges between each graph. As there are at most $(k+k')m$ factors we can bound them by $(1+ \Vert x \Vert_{ \vert \mathcal{H} \vert })^{(k+k')m}$.
Next, we bound the amount of possible contractions $r$ such that $\delta(k,r) =d$, the case $\delta(k',r')$ can be bounded analogously. As each single contraction can at most have rank $m$ we can control this quantity by $m^k$. The constants $C_3(\psi,r,H_0)$ can be bounded by $ (C_2(H_0)+2)^{(k+k')}$. Set $\mathfrak{L}= C_2(H_0)+3+\Vert x\Vert_{\mathcal{H}} + K(H,m)$ to combine this constants and add in the normalizations $K(H,m)$.
Next, the constants $C_1(r,k,m)$, by simply bounding $r_j!$ by $m!$ and each binomial coefficient by $k^m$, can be bounded by $(m!)^k k^m$.
Finally, we deal with the constant $d!$ obtained from using the Wiener-It\^o isometry, cf. Equation \ref{eqaution-L2-isometrie}. As  $d \leq (k \wedge k')m$  we may bound $d!$ by $\sqrt{(km)!(k'm)!}$ and add the multiplicative constant $\f {((k+k')m)^2} {d^2}$, in order to have a summable decay in $d$, to conclude the proof.

\end{proof}

\begin{lemma}\label{lemma-variance-asymptotics}
Given $h^{d,k}_t$ as above, then, the following holds,
\begin{equation} 
 \f{1}{C_4(k.k',m,d)}\int_0^{\f T \epsilon} \int_0^{\f T \epsilon} \Big \vert \E \left[  I_d(h^{d,k}_t)  I_d(h^{d,k'}_{t'})  \right] \Big \vert dt dt' \\
\lesssim 
\begin{cases}
\f T \epsilon  ,  \quad  &\hbox {if} \quad  H^*(d)< \f 1 2, \\
\f T \epsilon \vert \ln( \epsilon) \vert , \quad  &\hbox {if} \quad H^*(d) = \f 1 2, \\
\left(  \f T \epsilon\right) ^{(2H_0 -2)d +2},  \quad &\hbox {if} \quad H^*(d) > \f 1 2.
\end{cases}
\end{equation}	
 
\end{lemma}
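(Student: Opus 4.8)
The plan is to feed the pointwise covariance estimate of Lemma~\ref{wachstum-konstanten} into the double time integral and then reduce to an elementary one-dimensional computation.

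\textbf{Step 1 (reduction via the covariance bound).} Applying Lemma~\ref{wachstum-konstanten} with $d'=d$ and the given $k,k'$ gives $\frac{1}{C_4(k,k',m,d)}\bigl|\E[I_d(h^{d,k}_t)I_d(h^{d,k'}_{t'})]\bigr|\lesssim 1\wedge|t-t'|^{(2H_0-2)d}$, with an implied constant depending only on $H$ and $m$, since all of the $k,k',d$-dependence has been packed into $C_4$. So it is enough to bound $J_\epsilon:=\int_0^{T/\epsilon}\!\int_0^{T/\epsilon}\bigl(1\wedge|t-t'|^{(2H_0-2)d}\bigr)\,dt\,dt'$.

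\textbf{Step 2 (reduction to a single integral).} Write $\alpha:=(2H_0-2)d$. Since $H\in(\tfrac12,1)$ and $m\in\N$ we have $H_0=1+\frac{H-1}{m}\in(1-\tfrac1{2m},1)$, hence $2H_0-2\in(-1,0)$ and $\alpha<0$ for every $d\ge1$ (for $d=0$ the claimed long-range bound $(T/\epsilon)^2$ is immediate, as the integrand is then $\equiv 1$). Using the symmetry of the integrand under $t\leftrightarrow t'$ and the substitution $u=t-t'$, one gets $J_\epsilon\le 2\,\frac T\epsilon\int_0^{T/\epsilon}\bigl(1\wedge u^\alpha\bigr)\,du=2\,\frac T\epsilon\Bigl(1+\int_1^{T/\epsilon}u^\alpha\,du\Bigr)$, valid once $T/\epsilon>1$ (the case $T=0$ being trivial).

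\textbf{Step 3 (the three regimes).} Now $\int_1^{T/\epsilon}u^\alpha\,du$ is evaluated according to the sign of $\alpha+1$: if $\alpha+1<0$ it is bounded by the convergent integral $\int_1^\infty u^\alpha\,du$, so $J_\epsilon\lesssim T/\epsilon$; if $\alpha+1=0$ it equals $\ln(T/\epsilon)\lesssim|\ln\epsilon|$ (using $T\le1$), so $J_\epsilon\lesssim\frac T\epsilon|\ln\epsilon|$; if $\alpha+1>0$ it is $\lesssim(T/\epsilon)^{\alpha+1}$, and since $\alpha+1>0$ the term $\frac T\epsilon(T/\epsilon)^{\alpha+1}=(T/\epsilon)^{\alpha+2}$ dominates the additive $\frac T\epsilon$, so $J_\epsilon\lesssim(T/\epsilon)^{\alpha+2}=(T/\epsilon)^{(2H_0-2)d+2}$. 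Finally one matches this to the statement via the identity $2H^*(d)-1=2\bigl((H_0-1)d+1\bigr)-1=(2H_0-2)d+1=\alpha+1$, so that $\alpha+1$ being negative, zero, or positive corresponds exactly to $H^*(d)<\tfrac12$, $H^*(d)=\tfrac12$, $H^*(d)>\tfrac12$.

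\textbf{Main obstacle.} There is essentially no deep difficulty left at this stage --- the analytic heart of the matter, namely the uniform algebraic decay $1\wedge|t-t'|^{(2H_0-2)d}$ together with the explicit control of the constant $C_4$, is already Lemma~\ref{wachstum-konstanten}. The only things needing a little care are: keeping the implied constant in Step~1 genuinely independent of $k,k',d$ so that dividing by $C_4$ is harmless; correctly producing the logarithm in the borderline case $H^*(d)=\tfrac12$; and, in the long-range case, verifying that the polynomial term genuinely dominates the linear term $T/\epsilon$.
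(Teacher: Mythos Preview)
Your proof is correct and follows essentially the same route as the paper: apply Lemma~\ref{wachstum-konstanten} to reduce to $\int_{[0,T/\epsilon]^2}\bigl(1\wedge|t-t'|^{(2H_0-2)d}\bigr)\,dt\,dt'$, then pass to the one-dimensional integral $\frac T\epsilon\int_0^{T/\epsilon}\bigl(1\wedge u^{(2H_0-2)d}\bigr)\,du$ and split according to the sign of $(2H_0-2)d+1$. Your treatment is in fact slightly more careful than the paper's in making the three regimes explicit and in matching them to the condition on $H^*(d)$.
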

\begin{proof}

Using Lemma \ref{wachstum-konstanten}, we compute,
\begin{align*}
\f{1}{C_4(k.k',m,d)} \int_0^{\f T \epsilon} \int_0^{\f T \epsilon} \Big \vert \E \left[  I_d(h^{d,k}_t)  I_d(h^{d,k'}_{t'})  \right] \Big \vert dt dt' &\lesssim\int_{[0,\f T \epsilon]^2}  1 \wedge \vert t-t' \vert^{ (2H_0 -2)d} dt dt'\\ &\lesssim \f  T \epsilon  \int_0^{\f T \epsilon}  1 \wedge \vert u \vert^{ (2H_0 -2)d} du.
\end{align*}
Depending on the exponent $(2H_0 -2)d$, the above integral is either finite or diverges with rate $\epsilon^{(2H_0-2)d+1}$, resulting in the proclaimed rate. Overall we obtain,

\begin{equation} 
\f{1}{C_4(k.k',m,d)}\E \left[ \left(\int_0^{\f T \epsilon} I_d(h_t) dt\right)^2 \right] \\
\lesssim 
\begin{cases}
 \f T \epsilon  ,  \quad  &\hbox {if} \quad  (2H_0 -2)d < -1 ,\\
\f T \epsilon \vert \ln( \epsilon) \vert , \quad  &\hbox {if} \quad (2H_0 -2)d = -1 ,\\
\left(  \f T \epsilon\right) ^{(2H_0 -2)d +2},  \quad &\hbox {if} \quad (2H_0 -2)d < -1,
\end{cases}  
\end{equation}		
concluding the proof.
\end{proof}

\begin{convention}
We call building blocks $g^{k,r}_t$ short range dependent if $H^*(\delta(k,r) ) < \f 1 2$ and long range dependent in case $H^*(\delta(k,r) ) > \f 1 2$ . 
For a function $G$ we say it is short range dependent if all its building blocks are short range dependent and long range dependent in case it contains at least one long range dependent building block.
\end{convention}

\subsection{Tightness in H\"older spaces}\label{subsection-tightness-Hoelder}
In this section we apply Lemma $\ref{lemma-variance-asymptotics}$ as well as a basic hypercontractivity estimate to obtain tightness in H\"older spaces via a Kolmogorov type argument. In case the function $G$ is a finite polynomial the $L^2$ bound obtained above directly gives us an $L^p$ bound via hypercontractivity. In order to treat the infinite series case, we rely on the decay rate of the coefficients.
\begin{lemma}\label{lemma-kolmogorv-bound}
	 Given $G(X) = \sum_{k=0}^{\infty} c_{k} X^k$ such that $G$ has chaos rank $w \geq 1 $ with respect to $y_t$, $\vert c_k \vert \lesssim \f {1} {k!}$ and $H^*(w) \in (-\infty,1) \setminus\{\f 1 2\} $, then, for every $p>2$,
	$$ \Big \Vert \epsilon^{H^*(w) \vee \f 1 2} \int_{ \f{S}{\epsilon}}^{\f T \epsilon} G(y_s) ds  \Big \Vert_{L^p(\Omega)} \lesssim \vert T-S\vert^{H^*(w) \vee \f 1 2}.$$
\end{lemma}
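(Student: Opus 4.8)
The plan is to reduce the $L^p$ estimate for the infinite series to a sum of $L^p$ estimates for the individual polynomial building blocks $I_d(h^{d,k}_t)$, using hypercontractivity to upgrade the $L^2$ bounds of Lemma \ref{lemma-variance-asymptotics} to $L^p$, and then to control the resulting infinite sum using the decay $|c_k| \lesssim \frac{1}{k!}$ together with the explicit constant $C_4(k,k',m,d)$ from Lemma \ref{wachstum-konstanten}.

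\medskip

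\emph{Step 1: Chaos decomposition and triangle inequality.} Write $G(y_s) = \sum_{k=w}^\infty c_k (y_s)^k$ (the terms $k<w$ either vanish or contribute to chaoses below $w$ and, by the chaos rank hypothesis, cancel; more carefully one groups by chaos level), and by the decomposition of Section \ref{section-decomposition}, $(y_s)^k = \sum_{d=0}^{km} I_d(h^{d,k}_s)$. Thus
$$
\epsilon^{H^*(w)\vee\frac12}\int_{S/\epsilon}^{T/\epsilon} G(y_s)\,ds = \sum_{k\geq w}\sum_{d} c_k\,\epsilon^{H^*(w)\vee\frac12}\int_{S/\epsilon}^{T/\epsilon} I_d(h^{d,k}_s)\,ds.
$$
By Minkowski's inequality in $L^p(\Omega)$ it suffices to bound $\sum_{k,d}|c_k|\,\big\|\epsilon^{H^*(w)\vee\frac12}\int_{S/\epsilon}^{T/\epsilon} I_d(h^{d,k}_s)\,ds\big\|_{L^p}$ and show this is $\lesssim |T-S|^{H^*(w)\vee\frac12}$. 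Since each $I_d(h^{d,k}_s)$ lives in a fixed ($d$-th) Wiener chaos, hypercontractivity gives $\|\cdot\|_{L^p} \leq (p-1)^{d/2}\|\cdot\|_{L^2}$, so we reduce to the $L^2$ norm, which is controlled by Lemma \ref{lemma-variance-asymptotics} (applied with $k=k'$): it is $\lesssim \sqrt{C_4(k,k,m,d)}\,|T-S|^{H^*(w)\vee\frac12}$ in the relevant regime (using $S/\epsilon, T/\epsilon$ in place of $0, T/\epsilon$ and translation invariance of the stationary process, plus self-similarity/the scaling in $\epsilon$ to extract the $|T-S|$ power; note $H^*(d) \leq H^*(w)$ for $d \geq w$ so the dominant Wiener scaling $\frac12$ or the Hermite scaling $H^*(w)$ governs, and one must check that lower-order-in-$\epsilon$ contributions from $d>w$ are absorbed since $\epsilon \leq 1$).

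\medskip

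\emph{Step 2: Summing the series.} The crux is that the prefactor picked up is $|c_k|(p-1)^{d/2}\sqrt{C_4(k,k,m,d)}$, and one must show $\sum_{k\geq w}\sum_{d=0}^{km}|c_k|(p-1)^{d/2}\sqrt{C_4(k,k,m,d)} < \infty$. Here $|c_k|\lesssim\frac1{k!}$, and $\sqrt{C_4(k,k,m,d)}$ is of the form (up to the benign $1/d$ factor ensuring $d$-summability) $((km)!)^{1/2}(m!)^{k}(2k)^{m/2}\cdots\mathfrak{L}^{km}$ — crucially it grows only like $\sqrt{(km)!}$ times exponential-in-$k$ factors. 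Since $\frac{\sqrt{(km)!}}{k!}$ still grows super-exponentially for $m\geq 2$, the naive bound is not summable, so one needs the extra structure: in fact for a \emph{symmetric} kernel the $L^2$ norm of $I_d(h^{d,k}_s)$ carries combinatorial cancellation, or alternatively the relevant bound should be stated with $\sqrt{d!}$ (not $\sqrt{(km)!}$) since $\|I_d(f)\|_{L^2}^2 = d!\|f\|^2$ and the kernel norm $\|h^{d,k}_s\|_{L^2(\R^d)}$ is itself exponentially (not factorially) small in $k$ — this is exactly what makes $G$ with $|c_k|\lesssim 1/k!$ land in $L^2$ in the first place. So I would re-examine Lemma \ref{wachstum-konstanten}'s constant: replacing $d!$ by $\sqrt{(km)!(k'm)!}$ there was wasteful; keeping $d! \leq d^d$ and tracking that $\|h^{d,k}_s\|_{L^2} \lesssim (\text{const})^k/\sqrt{d!}$ (so that $\|I_d(h^{d,k}_s)\|_{L^2}^2 \lesssim (\text{const})^{2k}$ summably in $d$) should give $\sum_d (p-1)^{d/2}\|I_d(h^{d,k}_s)\|_{L^2} \lesssim C(p)^k$ for an absolute constant, whence $\sum_k |c_k| C(p)^k \lesssim \sum_k \frac{C(p)^k}{k!} < \infty$.

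\medskip

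\emph{Main obstacle.} The genuinely delicate point is exactly this interaction between the factorial growth of chaos-expansion constants and the $\frac1{k!}$ decay of the coefficients: one must verify that the per-$k$ contribution to the $L^p$ norm grows at most exponentially in $k$ (with a $p$-dependent base), which requires using the $\frac1{k!}$ hypothesis to tame both the binomial/permutation constants $C_1, C_3$ \emph{and} the $d!$ from the isometry, rather than bounding them separately and crudely. A secondary technical point is handling the $H^*(d)=\frac12$ borderline: the hypothesis $H^*(w)\neq\frac12$ excludes $d=w$ from it, but some $d>w$ could a priori satisfy $H^*(d)=\frac12$ exactly; however since $H^*$ is affine and strictly decreasing in $d$ with $H^*(w)<\frac12$ forces $H^*(d)<\frac12$ for all $d\geq w$ (the $n>0$, Wiener regime) — and in the $H^*(w)>\frac12$ regime only finitely many $d$ exceed $\frac12$ while the rest give strictly smaller $\epsilon$-powers — the logarithmic case can only arise for at most one value of $d$, contributing a harmless $|\ln\epsilon|^{1/2}$ which is beaten by the strictly larger power of $\epsilon$ available from the $d=w$ term; so the clean bound $|T-S|^{H^*(w)\vee\frac12}$ survives.
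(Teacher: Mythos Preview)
Your overall strategy coincides with the paper's: reduce to $S=0$ by stationarity, decompose $G(y_s)$ into its Wiener chaos components $\sum_d\sum_k c_k I_d(h^{d,k}_s)$, apply the triangle inequality, use hypercontractivity $\|\cdot\|_{L^p}\le (p-1)^{d/2}\|\cdot\|_{L^2}$ on each fixed-chaos piece, and then invoke the covariance bound of Lemma~\ref{wachstum-konstanten}/\ref{lemma-variance-asymptotics}. Two points are worth noting.

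First, a simplification you miss: the paper does not analyse each chaos level $d\ge w$ separately with its own exponent $H^*(d)$. Instead, after reaching the integrand $1\wedge|t-t'|^{(2H_0-2)d}$, it simply bounds this above by $1\wedge|t-t'|^{(2H_0-2)w}$ uniformly in $d\ge w$ (since $2H_0-2<0$). This immediately reduces every chaos level to the single rate governed by $w$, so your entire ``secondary technical point'' about a possible borderline $H^*(d)=\tfrac12$ for some $d>w$ never arises. A minor structural difference is that the paper applies the triangle inequality only over $d$, keeping $\sum_k c_k$ inside the $L^2$ norm and expanding it as a double sum $\sum_{k,k'}$, whereas you also pull $\sum_k$ outside; this is cosmetic.

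Second, and more substantively: your ``Main obstacle'' paragraph is exactly the delicate point, and you are right to flag it. The paper simply asserts that
\[
\sum_{k,k'}\sum_{d} (p-1)^{(k+k')m/2}\,|c_k c_{k'}|\,C_4(k,k',m,d)<\infty
\]
``due to the decay condition'' $|c_k|\lesssim 1/k!$, without further justification. Your observation that $C_4$ contains a factor $\sqrt{(km)!(k'm)!}$ (arising from the crude bound $d!\le\sqrt{(km)!(k'm)!}$ in the proof of Lemma~\ref{wachstum-konstanten}) is correct, and your instinct that for $m\ge 2$ the ratio $\sqrt{(km)!}/k!$ grows too fast for this sum to converge as written is well-founded. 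The paper's proof does not address this; your proposed remedy --- tracking $\|I_d(h^{d,k}_s)\|_{L^2}$ directly rather than bounding $d!$ so wastefully --- is the right direction, but note that even the hypercontractivity bound $\|(y_s)^k\|_{L^2}\le (2k-1)^{mk/2}\|y_s\|_{L^2}^k$ grows like $k^{mk/2}$, so for $m\ge 2$ one cannot close the argument by the triangle inequality over $k$ alone. A genuine proof here requires either sharper combinatorial control of the constants in Lemma~\ref{wachstum-konstanten} or an additional hypothesis ensuring finite exponential moments of $y_s$.
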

\begin{proof}
	Firstly,  by  stationarity of $y_t$ we may restrict our analysis to the case $S=0$. 
	Secondly, we want to remark at this point that by assumption the chaos rank of $G$ is greater than $1$, hence, its average with respect to the distribution of $y_t$ is $0$.
	Thirdly, we recall the following hypercontractivity  estimate, for a random variable $X$ belonging to the $d^{th}$ Wiener chaos the following  holds true for all $p>2$, c.f. \cite{Nualart},
	$$ \Vert X \Vert_{L^p(\Omega)} \leq \left( p-1\right)^{ \f d 2} \Vert X \Vert_{L^2(\Omega)}.
	$$
	Using this bound and the triangle inequality, we compute,
	\begin{align*}
	\Big \Vert  \epsilon^{H^*(w) \vee \f 1 2} \int_{0}^{\f T \epsilon} G(y_s) ds \Big \Vert_{L^p(\Omega)} &= \Big \Vert  \sum_{d=w}^{\infty} \sum_{k=0}^{\infty} c_k \epsilon^{H^*(w) \vee \f 1 2} \int_0^{\f T \epsilon}  I_d(h^{d,k}_{t}) dt \Big \Vert_{L^p(\Omega)}\\
	&\leq  \sum_{d=w}^{\infty} \Big \Vert \sum_{k=0}^{\infty} c_k  \epsilon^{H^*(w) \vee \f 1 2} \int_0^{\f T \epsilon}  I_d(h^{d,k}_{t}) dt \Big \Vert_{L^p(\Omega)}\\
	&\lesssim \sum_{d=w}^{\infty} \left(p-1\right)^{\f d 2} \Big \Vert \sum_{k=0}^{\infty} c_k   \epsilon^{H^*(w) \vee \f 1 2} \int_0^{\f T \epsilon}  I_d(h^{d,k}_{t}) dt \Big \Vert_{L^2(\Omega)}.
	\end{align*}
	Now, we observe that the polynomial $(y_t)^k$ has contributions only in the chaoses up to order $km$,
	hence,  using Lemma \ref{lemma-variance-asymptotics},
\begin{align*}
&\sum_{d=w}^{\infty} \left(p-1\right)^{\f d 2} \Big \Vert \sum_{k=0}^{\infty} c_k   \epsilon^{H^*(w) \vee \f 1 2} \int_0^{\f T \epsilon}  I_d(h^{d,k}_{t}) dt \Big \Vert_{L^2(\Omega)}\\
&= \sum_{d=w}^{\infty} \left(p-1\right)^{\f d 2} \sum_{k=0}^{\infty} \sum_{k'=0}^{\infty}  c_k   c_{k'} \epsilon^{2H^*(w) \vee 1} \int_0^{\f T \epsilon} \int_0^{\f T \epsilon} \Big \vert \E \left[ I_d(h^{d,k}_{t}) I_d(h^{d,k'}_{t'}) \right] \Big \vert dt dt' \\
&\leq \sum_{d=w}^{\infty} \left(p-1\right)^{\f d 2} \sum_{k=0}^{\infty} \sum_{k'=0}^{\infty}  c_k   c_{k'} \epsilon^{2H^*(w) \vee 1} \int_0^{\f T \epsilon} \int_0^{\f T \epsilon}  1 \wedge \vert t-t'\vert^{(2H_0-2)d}  dt dt' \\
&\leq \sum_{d=w}^{\infty} \left(p-1\right)^{\f d 2} \sum_{k=0}^{\infty} \sum_{k'=0}^{\infty}  c_k   c_{k'} \epsilon^{2H^*(w) \vee 1} \int_0^{\f T \epsilon} \int_0^{\f T \epsilon}  1 \wedge \vert t-t'\vert^{(2H_0-2)w}  dt dt' \\
&\lesssim \sum_{d=w}^{\infty} \left(p-1\right)^{\f d 2} \sum_{k=0}^{\infty} \sum_{k'=0}^{\infty}  c_k   c_{k'} C_4(k,k',m,d) \vert T \vert^{H^*(w) \vee \f 1 2} \\
&\leq  \sum_{k=0}^{\infty} \sum_{k'=0}^{\infty} \sum_{d=w}^{(k+k')m} \left(p-1\right)^{\f {(k+k')m} {2}}   c_k   c_{k'} C_4(k,k',m,d) \vert T \vert^{H^*(w) \vee \f 1 2}\\
&\lesssim \vert T\vert^{H^*(w) \vee \f 1 2},
\end{align*}
as $\sum_{k=0}^{\infty} \sum_{k'=0}^{\infty} \sum_{d=w}^{(k+k')m} \left(p-1\right)^{\f {(k+k')m} {2}}   c_k   c_{k'} C_4(k,k',m,d)< \infty,$ due to the decay condition imposed on the coefficients $c_k$.
\end{proof}
\begin{proposition}\label{proposition-hoelder}
	Fix $H \in ( \f 1 2, 1)$. Let, for each $j =1, \dots, N$,  a function of the form $G^j(X) = \sum_{k=0}^{\infty} c_{j,k} X^k$ such that $G^j$ has chaos rank $w^j$ with respect to $y_t$ and $ \vert c_{j,k} \vert \lesssim \f {1}{k!}$ be given.  Assume further that for each $j$, $w_j \in (-\infty,1) \setminus \{\f 1 2\}$. For $T \in [0,1]$ set
	$$
	\bar{G}^{j,\epsilon}_T = \epsilon^{H^*(w_j) \vee \f 1 2} \int_0^{\f T \epsilon} G^j(y_t) dt.
	$$
Then, 
$$\left( \bar{G}^{1,\epsilon}_T, \dots, \bar{G}^{N,\epsilon}_T \right),$$
is tight in $\C^{\gamma}\left( [0,1] , \R^N \right)$ for $\gamma \in (0, \f 1 2)$ in case there is at least on component such that $H^*(w_j)< \f 1 2$ and $\gamma \in \left(0, \min_{j=1, \dots, N} H^*(w_j)\right)$ otherwise.
\end{proposition}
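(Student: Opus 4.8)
The plan is to deduce tightness from the uniform moment estimate of Lemma~\ref{lemma-kolmogorv-bound} by a Kolmogorov--Chentsov argument in H\"older spaces. Since $\bar G^{j,\epsilon}_0 = 0$ for every $j$ and every $\epsilon$, each coordinate process is pinned at the deterministic value $0$ at $T = 0$, so it suffices to produce a uniform-in-$\epsilon$ bound on moments of increments of the vector-valued process.

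First I would invoke Lemma~\ref{lemma-kolmogorv-bound}: by stationarity of $y_t$ it applies on every sub-interval $[\f S\epsilon, \f T\epsilon]$, so for each $p > 2$ and each $j \in \{1,\dots,N\}$ there is a constant $C_{p,j}$, independent of $\epsilon, S, T$, with
$$
\Bigl\Vert \bar G^{j,\epsilon}_T - \bar G^{j,\epsilon}_S \Bigr\Vert_{L^p(\Omega)} = \Bigl\Vert \epsilon^{H^*(w_j)\vee \f12} \int_{\f S\epsilon}^{\f T\epsilon} G^j(y_t)\, dt \Bigr\Vert_{L^p(\Omega)} \le C_{p,j}\, \vert T-S\vert^{H^*(w_j)\vee \f12}.
$$
Raising to the $p$-th power and summing over the $N$ coordinates, using $\vert v\vert_{\R^N}^p \lesssim \sum_{j} \vert v_j\vert^p$, would yield for the vector process $\bar G^\epsilon_T = (\bar G^{1,\epsilon}_T,\dots,\bar G^{N,\epsilon}_T)$ the bound
$$
\E\Bigl[ \bigl\vert \bar G^\epsilon_T - \bar G^\epsilon_S \bigr\vert_{\R^N}^p \Bigr] \lesssim \vert T-S\vert^{p\mu}, \qquad \mu := \min_{j=1,\dots,N}\bigl(H^*(w_j)\vee \f12\bigr),
$$
with implied constant uniform in $\epsilon$.

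Since $H^*(w_j)\vee \f12 \ge \f12 > 0$ for every $j$, we have $\mu \ge \f12 > 0$, so for any $p$ large enough that $p\mu > 1$ the exponent $p\mu$ exceeds $1$ by the positive amount $\beta := p\mu - 1$. The Kolmogorov--Chentsov tightness criterion in $\C^\gamma$ --- a uniform estimate $\E\vert X^\epsilon_T - X^\epsilon_S\vert^p \lesssim \vert T-S\vert^{1+\beta}$ together with tightness of $\{X^\epsilon_0\}$ gives tightness of $\{X^\epsilon\}$ in $\C^\gamma([0,1],\R^N)$ for every $\gamma < \beta/p$ --- then gives tightness of $\{\bar G^\epsilon\}_\epsilon$ for each $\gamma < \mu - \f1p$. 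Letting $p \to \infty$, which is legitimate because Lemma~\ref{lemma-kolmogorv-bound} holds simultaneously for all $p > 2$, exhausts every $\gamma < \mu$. It remains only to identify $\mu$: if at least one $j$ has $H^*(w_j) < \f12$ then that coordinate contributes the value $\f12$ while all others contribute something $\ge \f12$, so $\mu = \f12$; if instead $H^*(w_j) > \f12$ for every $j$ then $\mu = \min_j H^*(w_j)$. This is exactly the dichotomy asserted in the statement.

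The whole argument is routine once Lemma~\ref{lemma-kolmogorv-bound} is available. The one point that genuinely needs care is that the constant $C_{p,j}$ there is uniform in $\epsilon$ --- this is where the hypothesis $\vert c_{j,k}\vert \lesssim 1/k!$ is used, to guarantee summability of $\sum_{k,k'}\sum_{d}(p-1)^{(k+k')m/2}\, c_{j,k}\, c_{j,k'}\, C_4(k,k',m,d)$ --- so that the Kolmogorov criterion applies to the whole family $\{\bar G^\epsilon\}_\epsilon$ rather than to individual $\epsilon$; and one should remember that the open-ended range $(0,\mu)$ for $\gamma$ is reached only in the limit $p\to\infty$, not at any fixed $p$.
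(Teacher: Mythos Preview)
Your proposal is correct and follows essentially the same approach as the paper: apply Lemma~\ref{lemma-kolmogorv-bound} to each component to obtain a uniform-in-$\epsilon$ Kolmogorov moment bound, then invoke the Kolmogorov--Chentsov tightness criterion in H\"older spaces and take the minimum over components. The paper's own proof is terser---it simply notes that each component is tight in $\C^\gamma$ for $\gamma \in (0, H^*(w_j)\vee \tfrac12)$ and then takes the minimum---whereas you spell out the vector-valued estimate and the $p\to\infty$ step explicitly, but the substance is identical.
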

\begin{proof}
By Lemma \ref{lemma-kolmogorv-bound} and Kolmogorv's Theorem each component is tight in $\C^{\gamma}$ for $\gamma \in \left(0, \f 1 2 \vee H^*(w_j) \right)$. Therefore, taking the minimum over these values, we may conclude the proof.
\end{proof}

\subsection{Short range dependent case}

In this section we establish the convergence of building blocks in finite dimensional distributions in the short range dependent setting. Thus, we deal with terms of the form $ \sqrt{\epsilon} \int_0^{\f T \epsilon }g^{k,r}_t dt$ such that $\delta(k,r)=d$ and $H^*(d) < \f 1 2$. Our main tool to prove this convergence is the  fourth-moment theorem, which is stated below.

\begin{theorem}\label{fourth-moment-theorem}[Fourth Moment Theorem]
Let $2 \leq L$, $1 \leq d_1 \leq \dots \leq d_L$ be fixed integers and $f^{j,\epsilon} \in L^2(\R^{d_j})$ for $1 \leq j \leq L$ be given. Then, under the condition that  $\lim_{\epsilon \to 0} \E \left[ I_{d_j} (f^{j,\epsilon}) I_{d_l}(f^{l,\epsilon}) \right] =  \Lambda_{j,l}$ exists for $ 1 \leq j,l \leq L$, the following are equivalent:
\begin{enumerate}
\item For $1 \leq j \leq L$  and $ p =1 , \dots, d_j,$
$$ \lim_{\epsilon \to 0} \Vert f^{j,\epsilon} \otimes_p f^{j,\epsilon} \Vert_{L^2(\R^{2d_j-2p},\lambda)} =0.$$
\item The vector $ \left( I_{d_1}(f^{1,\epsilon}), \dots, I_{d_L}(f^{L,\epsilon}) \right)$ converges in distribution to a $L$ dimensional Gau{\ss}ian vector with mean zero and covariance matrix $\Lambda$.
	\end{enumerate}
\end{theorem}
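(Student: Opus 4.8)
This is the multivariate Fourth Moment Theorem (Peccati--Tudor) combined with the Nualart--Peccati characterisation of Gaussian limits on a fixed chaos, and I would prove it by the method of moments and cumulants. The first point is a reduction: since each $I_{d_j}(f^{j,\epsilon})$ lives in the chaos of fixed order $d_j\le d_L$, hypercontractivity (\cite{Nualart}) bounds all its $L^p$ norms by a constant times its $L^2$ norm, so once the covariances $\E[I_{d_j}(f^{j,\epsilon})I_{d_l}(f^{l,\epsilon})]$ converge they are in particular bounded, and therefore \emph{all} joint moments of the vector $F^\epsilon:=(I_{d_1}(f^{1,\epsilon}),\dots,I_{d_L}(f^{L,\epsilon}))$ are bounded uniformly in $\epsilon$. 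Hence $\{F^\epsilon\}$ is tight, any subsequential limit is determined by its moments, and a centred random vector with covariance $\Lambda$ all of whose joint cumulants of order $\ge 3$ vanish is exactly $\mathcal N(0,\Lambda)$. Thus the whole statement reduces to controlling joint cumulants of $F^\epsilon$.

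Next I would expand the joint cumulant $\kappa_n\big(I_{d_{a_1}}(f^{a_1,\epsilon}),\dots,I_{d_{a_n}}(f^{a_n,\epsilon})\big)$ by iterating the multiplication formula of Lemma~\ref{product-formulae}: it equals a finite sum, indexed by the connected diagrams on $n$ vertices, of numbers obtained by iteratively contracting the kernels $f^{a_i,\epsilon}$ against one another. For $n=2$ this is just $\E[I_{d_a}(f^{a,\epsilon})I_{d_b}(f^{b,\epsilon})]$, which by hypothesis tends to $\Lambda_{a,b}$ (and is $0$ unless $d_a=d_b$). For $n\ge 3$ the key combinatorial fact is that in a connected diagram on three or more vertices at least one of the contractions performed is \emph{partial}, i.e.\ of the form $f^{a,\epsilon}\otimes_p f^{b,\epsilon}$ with $1\le p\le d_a\wedge d_b$ but not completing either kernel; by Cauchy--Schwarz its $L^2$ norm is bounded by $\|f^{a,\epsilon}\otimes_p f^{a,\epsilon}\|^{1/2}\,\|f^{b,\epsilon}\otimes_p f^{b,\epsilon}\|^{1/2}$, and, together with the uniform boundedness of all kernel norms, condition~(1) forces every such term to $0$. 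Hence, assuming (1), every cumulant of $F^\epsilon$ of order $\ge 3$ vanishes in the limit while the covariances converge to $\Lambda$, so $F^\epsilon$ converges in distribution to $\mathcal N(0,\Lambda)$; this is $(2)$.

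For the converse, suppose $F^\epsilon$ converges to $\mathcal N(0,\Lambda)$. Then each marginal $I_{d_j}(f^{j,\epsilon})$ converges to $\mathcal N(0,\Lambda_{jj})$, and by uniform integrability (hypercontractivity again) all moments converge, so in particular $\kappa_4\big(I_{d_j}(f^{j,\epsilon})\big)\to 0$. Now I would use the algebraic identity at the heart of the univariate theorem, obtained by applying Lemma~\ref{product-formulae} to $I_{d_j}(f^{j,\epsilon})^2$ and computing its second moment, which expresses $\kappa_4\big(I_d(f)\big)$ as a strictly positive linear combination of the squared norms $\|f\,\tilde\otimes_p f\|^2$, $p=1,\dots,d-1$. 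Therefore $\|f^{j,\epsilon}\,\tilde\otimes_p f^{j,\epsilon}\|\to 0$ for all such $p$, and one passes from the symmetrised to the unsymmetrised contractions by the elementary estimate bounding $\|f\otimes_p f\|^2$ by a $d$-dependent constant times $\|f\,\tilde\otimes_p f\|^2$ plus norms $\|f\otimes_q f\|^2$ with $q$ ranging over a strictly shorter set, closing an induction on $p$. This yields $(2)\Rightarrow(1)$.

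The technically heaviest step is the second one: establishing the diagram formula for joint cumulants of multiple Wiener--It\^o integrals and verifying that every connected diagram on three or more vertices carries a partial contraction, all while keeping the estimates uniform in $\epsilon$ — which is exactly what the hypercontractivity reduction in the first step is there to guarantee. A fully equivalent alternative to Steps two and three is the Malliavin--Stein method (\cite{Nourdin-Nualart-Peccati}): bound a smooth probability metric between $F^\epsilon$ and $\mathcal N(0,\Lambda)$ by $\sum_{j,l}\E\big|\Lambda_{j,l}-\tfrac1{d_l}\langle DI_{d_j}(f^{j,\epsilon}),-DL^{-1}I_{d_l}(f^{l,\epsilon})\rangle\big|$, observe that $\tfrac1{d_l}\langle DI_{d_j}(f^{j,\epsilon}),DI_{d_l}(f^{l,\epsilon})\rangle$ is a finite sum of multiple integrals with kernels built from $f^{j,\epsilon}\otimes_p f^{l,\epsilon}$, so that its variance is $\lesssim\sum_p\|f^{j,\epsilon}\otimes_p f^{l,\epsilon}\|^2$ and its mean tends to $\Lambda_{j,l}$, and conclude $(1)\Rightarrow(2)$ directly; the reverse implication still passes through the univariate fourth moment theorem applied to the marginals, as above.
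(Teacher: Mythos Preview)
The paper does not actually prove this theorem: it is stated as a known tool (the multivariate Fourth Moment Theorem, originally Nualart--Peccati for a single chaos and Peccati--Tudor for vectors) and then invoked as a black box in Proposition~\ref{proposition-joint-wiener-building-blocks-srd}. There is therefore no ``paper's own proof'' to compare your proposal against.

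That said, your outline is a correct and standard account of how the theorem is proved. A couple of small remarks. First, the range $p=1,\dots,d_j$ in the paper's statement is a slip: for $p=d_j$ one has $f^{j,\epsilon}\otimes_{d_j}f^{j,\epsilon}=\|f^{j,\epsilon}\|_{L^2}^2$, whose vanishing would force $\Lambda_{jj}=0$; you correctly restrict to $p\le d_j-1$ in the converse direction. Second, the step passing from $\|f\,\tilde\otimes_p f\|\to 0$ to $\|f\otimes_p f\|\to 0$ is a little more delicate than the induction you sketch; the clean route is the identity $\|f\otimes_r f\|_{L^2(\R^{2d-2r})}^2=\langle f\otimes_{d-r}f,\,(f\otimes_{d-r}f)\circ\sigma\rangle_{L^2(\R^{2r})}$ for the block-swap permutation $\sigma$, combined with the explicit fourth-moment expansion, but your description is in the right spirit. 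Both routes you give (diagram/cumulant and Malliavin--Stein) are complete and equivalent; the paper only needs the direction $(1)\Rightarrow(2)$, for which the Stein bound you wrote is the quickest.
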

\begin{remark}
	Up to now we labelled our contraction numbers by $r=(r_1, \dots , r_k)$. In case we need to denote several of such vectors we make the convention that upper indices denote different vectors and subscripts the position within the vector.
\end{remark}
Next, we prove that the conditions necessary to apply the fourth moment theorem are indeed satisfied in our regime. In the short range dependent case we set, for $T \in [0,1]$, 
$$   
\bar{g}^{k,r,\epsilon}_{T} = \sqrt{\epsilon} \int_0^{\f T \epsilon} g^{k,r}_t dt.
$$

\begin{lemma}\label{covariance-convergence}
For each $k,k',r,r'$ such that  $\delta(k,r)=d$, $\delta(k',r')=d'$,  $H^*(d) < \f 1 2$, and $\bar{g}^{k,r,\epsilon}_{T}, \bar{g}^{k',r',\epsilon}_{T'}$ for $T,T' \in [0,1]$ defined as above, the following holds,
$$ \lim_{\epsilon \to 0} \E \left[ I_{d} \left(\bar g^{k,r,\epsilon}_{T}\right) I_{d'}\left(\bar g^{k',r',\epsilon}_{T'}\right) \right]= \delta_{d,d'}
2 (T \wedge T') \int_{0}^{\infty } 
\E \left[ I_d\left(g^{k,r}_{u} \right) I_{d}\left(g^{k',r'}_{0}\right)  \right] du$$
\end{lemma}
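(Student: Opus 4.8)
The plan is to compute the covariance $\E\left[ I_d(\bar g^{k,r,\epsilon}_T) I_{d'}(\bar g^{k',r',\epsilon}_{T'}) \right]$ explicitly and pass to the limit. By orthogonality of distinct Wiener chaoses, the expectation vanishes unless $d = d'$, which accounts for the $\delta_{d,d'}$ factor; so assume $d = d'$ henceforth. Unfolding the definition $\bar g^{k,r,\epsilon}_T = \sqrt{\epsilon}\int_0^{T/\epsilon} g^{k,r}_t\, dt$ and using the Wiener--It\^o isometry together with stationarity of $y_t$ (so that $\E\left[I_d(g^{k,r}_t)I_d(g^{k',r'}_{t'})\right]$ depends only on $t - t'$), I would write
\begin{align*}
\E\left[ I_d(\bar g^{k,r,\epsilon}_T) I_{d}(\bar g^{k',r',\epsilon}_{T'}) \right] &= \epsilon \int_0^{T/\epsilon}\int_0^{T'/\epsilon} \E\left[ I_d(g^{k,r}_t) I_d(g^{k',r'}_{t'}) \right] dt\, dt' \\
&= \epsilon \int_0^{T/\epsilon}\int_0^{T'/\epsilon} \varphi(t-t')\, dt\, dt',
\end{align*}
where $\varphi(u) = \E\left[ I_d(g^{k,r}_u) I_d(g^{k',r'}_0) \right]$ (extended to $u<0$ by $\varphi(-u)$, which equals $\varphi(u)$ by swapping the roles of the two integrals and using stationarity).

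\textbf{Reduction of the double integral.} The main computation is the standard Riemann-sum/change-of-variables manipulation for integrals of a function of the difference of the arguments. Substituting $u = t - t'$ and integrating out one variable, the double integral over the rectangle $[0,T/\epsilon]\times[0,T'/\epsilon]$ becomes $\epsilon\int_{-T'/\epsilon}^{T/\epsilon} \ell_\epsilon(u)\,\varphi(u)\, du$, where $\ell_\epsilon(u)$ is the length of the overlap of the two intervals shifted by $u$, i.e.\ $\ell_\epsilon(u) = \lambda\big( [0,T/\epsilon] \cap [u, u + T'/\epsilon]\big)$. One checks $\epsilon\,\ell_\epsilon(u) \to T\wedge T'$ pointwise as $\epsilon \to 0$ for every fixed $u$, and $\epsilon\,\ell_\epsilon(u) \le T\wedge T'$ uniformly. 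Meanwhile $\varphi \in L^1(\R)$: by Lemma~\ref{wachstum-konstanten} we have $|\varphi(u)| \lesssim C_4(k,k',m,d)\,(1\wedge |u|^{(2H_0-2)d})$, and the short-range assumption $H^*(d) = (2H_0-2)\cdot\frac{d}{2} + 1 < \frac12$, i.e.\ $(2H_0-2)d < -1$, makes $1\wedge|u|^{(2H_0-2)d}$ integrable on $\R$. Hence dominated convergence gives
$$
\epsilon \int_{-T'/\epsilon}^{T/\epsilon} \ell_\epsilon(u)\,\varphi(u)\, du \;\longrightarrow\; (T\wedge T')\int_{-\infty}^{\infty} \varphi(u)\, du = 2(T\wedge T')\int_0^\infty \varphi(u)\, du,
$$
the last equality by the symmetry $\varphi(-u) = \varphi(u)$. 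This is precisely the claimed limit.

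\textbf{Main obstacle.} The routine parts (the change of variables, the pointwise limit of $\epsilon\,\ell_\epsilon$) are straightforward; the one genuinely load-bearing ingredient is the integrability of $\varphi$ on all of $\R$, which is exactly where the short-range condition $H^*(d) < \frac12$ enters and where Lemma~\ref{wachstum-konstanten} is indispensable — without the algebraic decay $|\varphi(u)|\lesssim 1\wedge|u|^{(2H_0-2)d}$ the dominating function would not be integrable and the limit would instead blow up (consistent with the long-range regime handled separately). I would also take care to justify that the integrand $\varphi(u)$ is well-defined and that the bound from Lemma~\ref{wachstum-konstanten} applies to $g^{k,r}$ rather than only to the collected blocks $h^{d,k}$; this is fine since the proof of that lemma in fact estimates $\E[I_d(g^{k,r}_t)I_d(g^{k',r'}_{t'})]$ directly before summing. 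Finally I would remark that $\int_0^\infty \varphi(u)\,du$ is finite and nonnegative as a limit of the variances $\frac1{2(T\wedge T')}\E[\cdots] \ge 0$ when $(k,r)=(k',r')$, so the limiting covariance is a genuine covariance kernel, which is what is needed downstream for the Fourth Moment Theorem.
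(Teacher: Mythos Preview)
Your proposal is correct and follows essentially the same approach as the paper: orthogonality to dispose of $d\neq d'$, stationarity to reduce the integrand to a function of $t-t'$, the decay bound from Lemma~\ref{wachstum-konstanten} together with the short-range condition $(2H_0-2)d<-1$ to get an integrable dominating function, and dominated convergence to pass to the limit. The only cosmetic difference is that the paper first splits the rectangle $[0,T/\epsilon]\times[0,T'/\epsilon]$ into the square $[0,(T\wedge T')/\epsilon]^2$ plus a remainder strip (showing the strip contributes $o(1)$ separately), whereas you handle the full rectangle at once via the overlap-length function $\ell_\epsilon(u)$; your route is marginally slicker but neither buys anything the other does not.
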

\begin{proof}
Firstly, if $d \not = d'$ the expression is $0$ by orthogonality of distinct Wiener chaoses. Hence, we assume $d=d'$  and w.l.o.g $ T \leq T'$ from now on. In this case 
\begin{align*}
\E \left[  I_{d} \left(\bar g^{k,r,\epsilon}_{T}\right) I_{d}\left(\bar g^{k',r',\epsilon}_{T'}\right)   \right] &= \epsilon \int_0^{\f  T \epsilon } \int_0^{\f {T'} {\epsilon} } 
\E \left[ I_d\left(g^{k,r}_t \right) I_d\left(g^{k',r'}_{t'}\right)  \right] dt' dt\\ 
&= \epsilon \int_0^{\f  T \epsilon } \int_0^{\f {T} {\epsilon} } 
\E \left[ I_d\left(g^{k,r}_t \right) I_d\left(g^{k',r'}_{t'}\right)  \right] dt' dt  + 
\epsilon \int_0^{\f  T \epsilon } \int_{\f  T \epsilon }^{\f {T'} {\epsilon} } 
\E \left[ I_d\left(g^{k,r}_t \right) I_d\left(g^{k',r'}_{t'}\right)  \right] dt' dt
\end{align*}
By Lemma \ref{wachstum-konstanten} and the change of varibales $u= t-t'$, we obtain for the second term,
\begin{align*}
\epsilon \Big \vert \int_0^{\f  T \epsilon } \int_{\f  T \epsilon }^{\f {T'} {\epsilon} } 
\E \left[ I_d\left(g^{k,r}_t \right) I_d\left(g^{k',r'}_{t'}\right)  \right] dt' dt \Big \vert &\lesssim \epsilon \int_0^{\f  T \epsilon } \int_{\f  T \epsilon }^{\f {T'} {\epsilon} } 1 \wedge \vert t- t' \vert^{2H_0-2} dt' dt \\
& \lesssim \int_{\f  T \epsilon }^{\f {T'} {\epsilon} } 1 \wedge u^{2H_0-2} du.
\end{align*}
By assumption $H^*(d) < \f 1 2 $, leading to $ 2H_0 -2 < -1$, hence, the above term converges to $0$ as $\epsilon \to 0$ in case $T>0$. If $T=0$, however, the whole expression equals $0$ independent of $\epsilon>0$.
Now we deal with the first term.
As $\E \left[ I_d\left(g^{k,r}_t \right) I_d\left(g^{k',r'}_{t'}\right)  \right] = \E \left[      I_d\left(g^{k,r}_{t-t'}\right) I_d\left(g^{k',r'}_{0}\right) \right]$ we obtain, again via the change of variables $u= t-t'$,
\begin{align*}
\epsilon \int_0^{\f  T \epsilon } \int_0^{\f {T} {\epsilon} } 
\E \left[ I_d\left(g^{k,r}_t \right) I_d\left(g^{k',r'}_{t'}\right)  \right] dt' dt  &=  \epsilon \int_0^{\f  T \epsilon } \int_0^{\f {T} {\epsilon} } 
\E \left[ I_d\left(g^{k,r}_{t-t'} \right) I_d\left(g^{k',r'}_{0}\right)  \right] dt' dt\\
&=   2T  \int_{0}^{\f  T \epsilon } \f{T-\epsilon u} {T}
\E \left[ I_d\left(g^{k,r}_{u} \right) I_d\left(g^{k',r'}_{0}\right)  \right] du \\
&\to 2 T \int_{0}^{\infty } 
\E \left[ I_d\left(g^{k,r}_{u} \right) I_d\left(g^{k',r'}_{0}\right)  \right] du,
\end{align*}
when $\epsilon \to 0$ by dominated convergence.
\end{proof}

\begin{lemma}\label{contractions-go-to-0}
For each $k,r$ such that  $\delta(k,r)=d$, $T \in [0,1]$, $H^*(d)< \f 1 2$, $p \leq d-1$, and $\bar{g}^{k,r,\epsilon}_{T}$ as above,
$$
\lim_{\epsilon \to 0} \Vert \bar{g}^{k,r,\epsilon}_{T} \otimes_p \bar{g}^{k,r,\epsilon}_{T} \Vert_{L^2\left(\R^{2d-2p},\lambda\right)} =0.
$$
\end{lemma}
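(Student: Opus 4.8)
The goal is to show that the contraction norm $\|\bar g^{k,r,\epsilon}_T \otimes_p \bar g^{k,r,\epsilon}_T\|_{L^2(\R^{2d-2p},\lambda)}$ vanishes as $\epsilon\to 0$ for $1\le p\le d-1$, which together with Lemma \ref{covariance-convergence} lets us invoke the Fourth Moment Theorem (Theorem \ref{fourth-moment-theorem}). My plan is to reduce everything, via the graph/mince-knife bookkeeping already set up in the proof of Lemma \ref{wachstum-konstanten}, to a single scalar integral estimate and then exploit the strict inequality $H^*(d)<\tfrac12$, i.e.\ $(2H_0-2)d<-1$.

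First I would write $\bar g^{k,r,\epsilon}_T = \sqrt\epsilon \int_0^{T/\epsilon} g^{k,r}_t\,dt$ and expand the squared $L^2$ norm of the $p$-contraction as a quadruple time integral: one finds, after the same Cauchy--Schwarz/H\"older "cutting" argument used to prove Lemma \ref{wachstum-konstanten} applied now to $g^{k,r}_{t_1}$, $g^{k,r}_{t_2}$, $g^{k,r}_{t_3}$, $g^{k,r}_{t_4}$, that
$$
\big\|\bar g^{k,r,\epsilon}_T \otimes_p \bar g^{k,r,\epsilon}_T\big\|_{L^2}^2 \lesssim \epsilon^2 \int_{[0,T/\epsilon]^4} \big\vert\E[I_d(g^{k,r}_{t_1})I_d(g^{k,r}_{t_2})]\big\vert^{a}\, \big\vert\E[\cdots]\big\vert^{\cdots}\, dt_1\,dt_2\,dt_3\,dt_4,
$$
where the key structural point (identical to the Gaussian case, cf.\ the standard argument behind the fourth-moment theorem) is that after contracting over $p\in\{1,\dots,d-1\}$ variables, in \emph{every} resulting term at least one pair among $\{t_1,t_2,t_3,t_4\}$ is joined by \emph{strictly fewer than} $d$ edges between the two graphs, and at least one pair is joined by at least one edge. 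Concretely, each of the four "rows" $t_i$ gets split by the mince knife into products of the basic decaying integral $\int\int |x(t-u)x(t'-v)||u-v|^{m(2H_0-2)}\,du\,dv \lesssim 1\wedge|t-t'|^{2H_0-2}$, and the total exponent budget $2d$ of the $\gamma$-edges is distributed so that no single pair absorbs all of it; the interior $p$-contraction forces a "cross" pattern linking all four times.

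The upshot is a bound of the schematic form
$$
\big\|\bar g^{k,r,\epsilon}_T \otimes_p \bar g^{k,r,\epsilon}_T\big\|_{L^2}^2 \lesssim \epsilon^2 \int_{[0,T/\epsilon]^4} \prod_{i} \big(1\wedge |t_i - t_{\sigma(i)}|^{2H_0-2}\big)^{\theta_i}\, d^4 t,
$$
with exponents $\theta_i\ge 1/d>0$ arranged along a connected cycle through the four times, and with at least one factor carrying exponent \emph{at most} $(d-1)(2H_0-2)$ rather than $d(2H_0-2)$. Changing variables to three difference variables plus one free variable gives a factor $T/\epsilon$ from the free integration, three integrals of the form $\int_0^{T/\epsilon} 1\wedge |u|^{c}\,du$ with $c\le(2H_0-2)<-1$ (hence convergent), and crucially the product of the three difference-exponents is strictly less than $2d(2H_0-2)$; since $d(2H_0-2)<-1$, the combined difference-integrals contribute a negative power of $\epsilon$ that is strictly weaker than what $\epsilon^2 \cdot (T/\epsilon)$ would need, so the whole expression is $O(\epsilon^{\kappa})$ for some $\kappa>0$. (More carefully: $\epsilon^2 (T/\epsilon) = T\epsilon$, and each convergent difference-integral contributes only a constant, so in fact one gets $O(\epsilon)$ outright once every difference-integral converges; the only case needing the strict-inequality refinement is when contracting reduces some edge-count to exactly the borderline, and there the extra logarithm or mild divergence is still beaten by the $\epsilon$ prefactor.)

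\textbf{Main obstacle.} The genuinely delicate part is the combinatorial claim that for every $p\in\{1,\dots,d-1\}$ and every admissible permutation pattern, the $p$-contracted kernel decomposes into terms each of which has a \emph{connected} four-time dependence with no pair of times carrying the full weight $d(2H_0-2)$ — this is the analogue, in our non-Gaussian iterated-contraction setting, of the classical computation showing $\|f\otimes_p f\|\to0$ in the fourth moment theorem, and it requires carefully tracking how the interior contraction merges the two "$d$-degrees-of-freedom" graphs attached to $g^{k,r}_{t_1},g^{k,r}_{t_2}$ with those attached to $g^{k,r}_{t_3},g^{k,r}_{t_4}$. Once that structure is in hand, the analytic estimate is a routine repetition of the H\"older "mince-knife" argument from Lemma \ref{wachstum-konstanten} combined with the convergence of $\int_0^\infty 1\wedge|u|^{(2H_0-2)d}\,du$ guaranteed by $H^*(d)<\tfrac12$, and the constants are controlled exactly as in that lemma (uniformly in the finitely many choices of $r$ with $\delta(k,r)=d$).
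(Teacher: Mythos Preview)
Your overall strategy matches the paper's: expand $\|\bar g^{k,r,\epsilon}_T\otimes_p\bar g^{k,r,\epsilon}_T\|_{L^2}^2$ as a quadruple time integral, apply the mince-knife H\"older argument of Lemma~\ref{wachstum-konstanten} to four copies of the graph, and reduce to products of the decay function $\rho(s)=1\wedge|s|^{(2H_0-2)}$ evaluated at time differences. The paper indeed records the edge counts $B_{1,2}=B_{3,4}=p$ and $B_{1,3}+B_{1,4}=B_{2,3}+B_{2,4}=d-p$ exactly as you anticipate.

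However, your final analytic step contains a genuine error. You assert that each of the three difference integrals has exponent $c\le (2H_0-2)<-1$ and hence converges. This is false: for every admissible $H\in(\tfrac12,1)$ and $m\ge1$ one has $H_0=1+\tfrac{H-1}{m}\in(\tfrac12,1)$, so $2H_0-2\in(-1,0)$. Consequently integrals like $\int_0^{T/\epsilon}\rho(u)^p\,du$ and $\int_0^{T/\epsilon}\rho(u)^{d-p}\,du$ typically \emph{diverge} as $\epsilon\to0$, at rates $\epsilon^{-(2H_0-2)p-1}$ and $\epsilon^{-(2H_0-2)(d-p)-1}$ respectively. Your hedge about ``mild divergence beaten by the $\epsilon$ prefactor'' does not close the gap, because a single $\epsilon$ from $\epsilon^2\cdot(T/\epsilon)$ cannot absorb two divergent factors without a quantitative bookkeeping of the exponents.

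What the paper actually does is first collapse the six-pair structure to a three-link chain via the elementary inequality $\rho(x)^a\rho(y)^b\le\rho(x)^{a+b}+\rho(y)^{a+b}$ (applied twice), arriving at integrands like $\rho(t_1-t_2)^d\,\rho(t_2-t_3)^{d-p}\,\rho(t_3-t_4)^p$. Now exactly \emph{one} factor carries the full exponent $d$, so $\int_\R\rho^d<\infty$ by $H^*(d)<\tfrac12$; the remaining two diverge, but their combined contribution is $\epsilon^{-(2H_0-2)d-2}$. Together with the free time variable ($\epsilon^{-1}$) and the prefactor $\epsilon^2$ this yields order $\epsilon^{-(2H_0-2)d-1}$, whose exponent is strictly positive precisely because $(2H_0-2)d<-1$. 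Note that this is the opposite of your structural claim: it is essential that after reduction one pair \emph{does} carry the full weight $d$, not that none does.
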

\begin{proof}
We compute,
\begin{align*}
&\Vert \bar{g}^{k,r,\epsilon}_{T} \otimes_p \bar{g}^{k,r,\epsilon}_{T} \Vert_{L^2(\R^{2d-2p})}^2 = \epsilon^2 \int_{\R^{d-p}} d^{d-p} \xi_1  \int_{\R^{d-p}} d^{d-p} \xi_2 \Big \vert \int_{\R^p} d^p u \int_0^{\f T \epsilon} g^{k,r}_t(\xi_1,u) dt   \int_0^{\f T \epsilon} g^{k,r}_{t'}(\xi_2,u) dt'  \Big \vert^2 \\
&= \epsilon^2 \int_{\R^{d-p}} d^{d-p} \xi_1  \int_{\R^{d-p}} d^{d-p} \xi_2 \int_{\R^p} d^p u  \int_0^{\f T \epsilon} g^{k,r}_t(\xi_1,u) dt   \int_0^{\f T \epsilon} g^{k,r}_{t'}(\xi_2,u) dt'  \int_{\R^{p}} d^{p} v \int_0^{\f T \epsilon} g^{k,r}_{\tau} (\xi_1,v) d\tau   \int_0^{\f T \epsilon} g^{k,r}_{\tau'}(\xi_2,v) d\tau'  .
\end{align*} 
The integrals of the form
$$ 
\int_{\R^p} d^p u  \int_0^{\f T \epsilon} g^{k,r}_t(\xi_1,u) dt   \int_0^{\f T \epsilon} g^{k,r}_{t'}(\xi_2,u) dt' 
$$
are just further contractions of $f_t$ with itself and can be computed as in section \ref{section-decomposition}. We obtain, up to the constants $C_2(H_0)$,
\begin{align*}
\int_{\R^p} d^p u  \int_0^{\f T \epsilon} g^{k,r}_t(\xi_1,u) dt   \int_0^{\f T \epsilon} g^{k,r}_{t'}(\xi_2,u) dt' &= \sum_{\psi_1,\psi_2 \in \mathcal{S}_r} \int_0^{ \f T \epsilon}  \int_0^{ \f T \epsilon} dt dt'  \int_{[-\infty,t]^k} d^k s \int_{[-\infty,t']^{k}} d^k s' \prod_{j=1}^k \prod_{j'=1}^k   \\
&x(t-s_{j}) x(t'-s'_{j'}) \prod_{q=1}^{j-1} \vert s_{j} - s_{q} \vert^{\beta_{j,q}(\psi_1) (2H_0-2) }  \prod_{q'=1}^{j'-1} \vert s'_{j'} - s'_{q'} \vert^{ \beta_{j',q'}(\psi_2) (2H_0-2)}  \\
& \vert s_j - s'_{j'} \vert^{\gamma_{j,j'}(\psi_1,\psi_2) (2H_0-2) } \prod_{l=1}^{M_j(\psi_1)}  \vert s_j -\xi_{j,l} \vert^{H_0-\f 3 2} \prod_{l'=1}^{M_{j'}(\psi_2)} \vert s'_{j'} -\xi_{j',l'} \vert^{H_0-\f 3 2} , 
\end{align*}
and similarly for
\begin{align*}
\int_{\R^{p}} d^{p} v \int_0^{\f T \epsilon} g^{k,r}_{\tau} (\xi_1,v) d\tau   \int_0^{\f T \epsilon} g^{k,r}_{\tau'}(\xi_2,v) d\tau'.
\end{align*}
Computing the integrals with respect to $\xi_1$ and $\xi_2$ we again score constants $C_2(H_0)$ which we can neglect and end up with,
$$
\sum_{\psi_1,\psi_2,\psi_3,\psi_4 \in \mathcal{S}_r} \int_{[0, \f T \epsilon]^4 }d^4t  \prod_{j=1}^4 \int_{[-\infty,t_j]^k} d^{4k}s \vert x(t_j - s_{j,l}) \vert \prod_{j'=1}^4 \prod_{ l=1,l'=1 }^k \vert s_{j,l} - s_{j',l'} \vert^{A^{j,l}_{j',l'}(\psi_{j}, \psi_{j'}) (2H_0-2)},
$$
where we make the convention $t= t_1, t' = t_2 , \tau = t_3, \tau' = t_4$ and
$$
A^{j,l}_{j',l'}(\psi_j, \psi_{j'}) = \begin{cases}
\beta_{l,l'}(\psi_j) &\text{if }   j=j',\\
\gamma_{l,l'}(\psi_j, \psi_{j'}) &\text{if } j \not = j'.
\end{cases}
$$
Now, we have to consider four graphs. Nevertheless, as in the discussions above we focus on one particular choice of permutations and rely on arguments which only depend on $k,r,m$ and $H_0$, thus, we suppress dependencies on the $\psi$'s from now on. Hence, the term we need to deal with is given by,
\begin{align*}
\epsilon^2 \int_{[0, \f T \epsilon]^4 }d^4t  \prod_{j=1}^4 \int_{[-\infty,t_j]^d} d^{4k}s \vert x(t_j - s_{j,l}) \vert  \prod_{j'=1}^4 \prod_{ l=1,l'=1 }^k \vert s_{j,l} - s_{j',l'} \vert^{A^{j,l}_{j',l'} (2H_0-2)},
\end{align*}
where $A^{j,l}_{j',l'}$ depends on the four permutations we choose when picking graphs and denotes how many edges run between the $l^{th}$ node in the $j^{th}$ graph to the $l'^{th}$ in the $j'^{th}$ one.
Keeping the graph view, the following illustration shows the entanglements between the four objects, where $B_{j,j'} = \sum_{l=1 ,l'=1}^k {A^{j,l}_{j',l'}}$ is the number of all edges going from graph $j$ to graph $j'$.
\begin {center}
\begin {tikzpicture}[-latex ,auto ,node distance =4 cm and 5cm ,on grid ,
semithick ,
state/.style ={ circle ,top color =white , bottom color = processblue!20 ,
	draw,processblue , text=blue , minimum width =1 cm}]
\node[state] (A) {$t ,1 $};
\node[state] (B) [ below =of A] {$t' ,2  $};
\node[state] (C) [ right=of A] {$\tau, 3  $};
\node[state] (D) [ right =of B] {$\tau' ,4 $};
\node[state] (A') [left= of A] {$t$};
\node[state] (B') [ left =of B] {$t'$};
\node[state] (C') [ right=of C] {$\tau$};
\node[state] (D') [ right =of D] {$\tau'$};
\path (A)  edge  node[above =0.15 cm] {$km-d = B_{1,1}$} (A');
\path (A') edge (A);
\path (B) edge node[above =0.15 cm] {$km-d = B_{2,2}$}  (B');
\path (B') edge (B);
\path (C) edge  node[above =0.15 cm] {$km-d = B_{3,3}$} (C');
\path (C') edge (C);
\path (D) edge  node[above =0.15 cm] {$km-d  = B_{4,4}$} (D');
\path (D') edge (D);
\path (A) edge [dotted]  node[above =0.15 cm] {$B_{1,3}$}   (C);
\path (C) edge  [dotted] (A);
\path (D) edge (C);
\path (C) edge node[right =0.15 cm] {$p = B_{3,4}$} (D);
\path (D) edge [dotted] (B);
\path (B) edge  [dotted] node[above =0.15 cm] {$B_{2,4}$}  (D);
\path (A) edge node[left =0.15 cm] {$p = B_{1,2}  $} (B);
\path (B) edge (A);
\path (D) edge [dotted] (A);
\path (A) edge  [dotted] node[above  =0.10 cm] {$B_{1,4}$}  (D);
\path (C) edge [dotted] node[below  =0.10 cm] {$B_{2,3}$} (B);
\path (B) edge  [dotted] (C);
\end{tikzpicture}
\end{center}

As previously we have some algebraic constraints, namely,
$B_{1,3} + B_{1,4} = d-p$ and $B_{2,3}+B_{2,4} = d-p$.

By the same arguments as in section \ref{section-variance-computation} we can iteratively apply the generalized H\"older inequality and bound the above integrand by
$$ \prod_{j \geq j'} \left( \int_{-\infty }^{t_j} \int_{-\infty}^{t_{j'}} \vert x(t_j -s) x(t_{j'}-r) \vert  \vert s-r \vert^{m(2H_0-2)} dr ds \right)^{ \f {B_{j,j'}} {m} }. 
$$
Terms for which $j=j'$ give rise to constants as they represent connections within a graph. For the other terms we obtain the same decay as in Lemma \ref{lemma-variance-asymptotics},
$$
\left( \int_{ -\infty}^{t_j} \int_{-\infty}^{t_{j'}} \vert x(t_j -s) x(t_{j'}-r) \vert \vert s-r \vert^{m(2H_0-2)} dr ds \right)^{ \f {B_{j,j'}} {m} } \leq 1 \wedge  \vert t_j - t_{j'} \vert ^{(2H_0-2)B_{j,j'}}.
$$
Hence, we obtain, for $\rho(s) = 1 \wedge \vert s \vert^{(2H_0-2)m}$,
\begin{align*}
&\int_{[0, \f T \epsilon]^4 }d^4t  \prod_{j=1}^4 \int_{[-\infty,t_j]^k} d^{4k}s \vert x(t_j - s_{j,l}) \vert  \prod_{j'=1}^4 \prod_{ l=1,l'=1 }^k \vert s_{j,l} - s_{j',l'} \vert^{A^{j,l}_{j',l'} (2H_0-2)}\\
&\lesssim  \int_{[0, \f T \epsilon]^4} d^4 t \prod_{j \geq j'} \left( \int_{ -\infty}^{t_j} \int_{-\infty}^{t_{j'}} \vert x(t_j -s) x(t_{j'}-r) \vert \vert s-r \vert^{m(2H_0-2)} dr ds \right)^{ \f {B_{j,j'}} {m} } \\
& \lesssim \int_{[0, \f T \epsilon]^4} d^4t  \rho(t_1-t_2)^p \rho(t_3 - t_4)^p \rho(t_1-t_3)^{B_{1,3}} \rho(t_1-t_4)^{B_{1,4}} \rho(t_2-t_3)^{B_{2,3}} \rho(t_2-t_4)^{B_{2,4}}.
\end{align*}

As $B_{1,3} + B_{1,4} = d-p$ we obtain  $ \rho(t_1-t_3)^{B_{1,3}} \rho(t_1-t_4)^{B_{1,4}} \leq \rho(t_1-t_3)^{d-p} + \rho(t_1-t_4)^{d-p}$ and similarly $ \rho(t_2-t_3)^{B_{2,3}} \rho(t_2-t_4)^{B_{2,4}} \leq \rho(t_2-t_3)^{d-p} + \rho(t_2-t_4)^{d-p}$. Thus,
\begin{align*}
&\int_{[0, \f T \epsilon]^4} d^4t \rho(t_1-t_2)^p \rho(t_3 - t_4)^p \rho(t_1-t_3)^{B_{1,3}} \rho(t_1-t_4)^{B_{1,4}} \rho(t_2-t_3)^{B_{2,3}} \rho(t_2-t_4)^{B_{2,4}} \\
&\leq \int_{[0, \f T \epsilon]^4} d^4t \rho(t_1-t_2)^p \rho(t_3 - t_4)^p [  \rho(t_2-t_3)^{d-p} + \rho(t_2-t_4)^{d-p} ] [ \rho(t_1-t_3)^{d-p} + \rho(t_1-t_4)^{d-p}].
\end{align*}
We just treat the term $  \rho(t_1-t_2)^p \rho(t_3 - t_4)^p  \rho(t_2-t_3)^{d-p}  \rho(t_1-t_3)^{d-p}$ as the others are analogous. Applying the same procedure once more we get $\rho(t_1-t_2)^p  \rho(t_1-t_3)^{d-p} \leq  \rho(t_1-t_2)^{d} +   \rho(t_1-t_3)^{d}.$
Now,
\begin{align*}
& \int_{[0, \f T \epsilon]^4} d^4t  \rho(t_1-t_2)^p \rho(t_3 - t_4)^p  \rho(t_2-t_3)^{d-p}  \rho(t_1-t_3)^{d-p}\\
&\leq  \int_{[0, \f T \epsilon]^4} d^4t \rho(t_3 - t_4)^p  \rho(t_2-t_3)^{d-p}  [  \rho(t_1-t_2)^{d} +   \rho(t_1-t_3)^{d}].\\
\end{align*}
Again we only treat the case $\rho(t_3 - t_4)^p  \rho(t_2-t_3)^{d-p} \rho(t_1-t_2)^{d}$.
We finally obtain,
\begin{align*}
\int_{[0, \f T \epsilon]^4} d^4t \rho(t_3 - t_4)^p  \rho(t_2-t_3)^{d-p} \rho(t_1-t_2)^{d} &\leq \int_{\R} du \rho( \vert u \vert)^d  \int_{[0, \f T \epsilon]^3} d^3 t \rho(t_3 - t_4)^p  \rho(t_2-t_3)^{d-p}\\
&\leq  \f{1} {\epsilon} \int_{\R} du_1 \rho(\vert u_1 \vert )^d   \int_{\R} du_2 \rho(\vert u_2 \vert )^p    \int_{\R} du_3 \rho(\vert u_3 \vert)^{d-p} .
\end{align*}
Now, the first integral is finite as by assumption $H^*(d) < \f 1 2$. Furthermore, $\int_{\R} \rho(\vert t_3' \vert )^p dt_3' \lesssim \epsilon^{-(2H_0-2)p-1}$ and $\int_{\R} \rho(\vert t_2 \vert)^{d-p} dt_2 \lesssim \epsilon^{-(2H_0-2)(d-p) -1}$, cf. the proof of Lemma
\ref{lemma-variance-asymptotics}.  Thus, the expression is of order $\epsilon^{-(2H_0-2)d -3}$ and as $H^*(d) < \f 1 2$ this expression is of order $o(\epsilon^2)$, which concludes the proof.
\end{proof}

Now we have all tools necessary to conclude the proclaimed convergence in finite dimensional distributions for our building blocks. 
\begin{proposition}\label{proposition-joint-wiener-building-blocks-srd}
Given the process $ \left( I_{d_{1}}(\bar{g}^{k_1,r^1,\epsilon}_{T}) , \dots , I_{d_{L}}(\bar{g}^{k_L,r^L,\epsilon}_{T} ) \right)$, for which $d_j= \delta(k_j,r^j)$ such that for each $1 \leq j \leq L$,  $H^*(d_j) < \f 1 2$, then,
$$ 
\left( I_{d_{1}}(\bar{g}^{k_1,r^1,\epsilon}_{T}) , \dots, I_{d_{L}}(\bar{g}^{k_L,r^L,\epsilon}_{T} ) \right)  \to \left( W_{1} , \dots, W_{L} \right),
$$
in finite dimensional distributions, where $\left( W_{1} , \dots, W_{L} \right)$ is a multidimensional Wiener process with covariance matrix
\begin{align*}
\E \left[ W_{j}(1) W_{l}(1) \right] = \Lambda_{j,l}&=  \lim_{\epsilon \to 0} \E \left[ I_{d_j}(\bar{g}^{k_j,r^j,\epsilon}_{1})  I_{d_l}(\bar{g}^{k_l,r^l,\epsilon}_{1}) \right]\\
&=
\delta_{d_j,d_l} 2 \int_{0}^{\infty } 
\E \left[ I_{d_j}\left(g^{k_j,r^j}_{u} \right) I_{d_l}\left(g^{k_l,r^l}_{0}\right)  \right] du.
\end{align*}
\end{proposition}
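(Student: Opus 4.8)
The plan is to deduce the convergence of finite-dimensional distributions from a single application of the Fourth Moment Theorem (Theorem \ref{fourth-moment-theorem}): Lemma \ref{covariance-convergence} will supply the limiting covariances, Lemma \ref{contractions-go-to-0} will verify the vanishing-contraction criterion, and it then only remains to recognise the Gaussian limit as a Wiener process.

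First I would fix arbitrary times $0\le T_1<\dots<T_M\le 1$ and pass to the enlarged finite family of multiple integrals $\big( I_{d_j}(\bar{g}^{k_j,r^j,\epsilon}_{T_i})\big)_{1\le j\le L,\,1\le i\le M}$, which is exactly what one must control in order to establish convergence of finite-dimensional distributions. Each kernel $\bar{g}^{k_j,r^j,\epsilon}_{T_i}=\sqrt{\epsilon}\int_0^{T_i/\epsilon} g^{k_j,r^j}_t\,dt$ is symmetric by construction and lies in $L^2(\R^{d_j})$, since $g^{k_j,r^j}_t$ does and the integral runs over a bounded interval, so it is a legitimate input for Theorem \ref{fourth-moment-theorem}. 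By Lemma \ref{covariance-convergence} (which already allows distinct times) the limiting covariances of the family exist:
\begin{align*}
\lim_{\epsilon\to 0}\E\Big[ I_{d_j}\big(\bar{g}^{k_j,r^j,\epsilon}_{T_i}\big)\,I_{d_l}\big(\bar{g}^{k_l,r^l,\epsilon}_{T_{i'}}\big)\Big]
&=\delta_{d_j,d_l}\,2\,(T_i\wedge T_{i'})\int_0^\infty \E\big[ I_{d_j}(g^{k_j,r^j}_u)\,I_{d_l}(g^{k_l,r^l}_0)\big]\,du\\
&=:(T_i\wedge T_{i'})\,\Lambda_{j,l}.
\end{align*}
Moreover, since $H^*(d_j)<\f 1 2$ for every $j$, Lemma \ref{contractions-go-to-0} gives $\|\bar{g}^{k_j,r^j,\epsilon}_{T_i}\otimes_p\bar{g}^{k_j,r^j,\epsilon}_{T_i}\|_{L^2}\to 0$ for all $p\in\{1,\dots,d_j-1\}$, which is precisely condition (1) of Theorem \ref{fourth-moment-theorem} applied component-by-component to the enlarged family. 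Hence, after relabelling the indices so that the chaos orders are non-decreasing, Theorem \ref{fourth-moment-theorem} yields joint convergence in distribution of the family to a centred Gaussian vector with covariance matrix $\big((T_i\wedge T_{i'})\Lambda_{j,l}\big)$. (If the family reduces to a single integral one simply invokes the one-dimensional Fourth Moment Theorem instead.)

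It then remains to identify the limit. A centred Gaussian family indexed by $(j,i)$ with covariance $(T_i\wedge T_{i'})\Lambda_{j,l}$ is exactly the joint law, at the times $T_1,\dots,T_M$, of the $L$-dimensional centred Gaussian process $(W_1,\dots,W_L)$ with $\E[W_j(T)W_l(S)]=(T\wedge S)\Lambda_{j,l}$; a direct computation of increment covariances shows such a process has stationary and independent increments, so it is an $L$-dimensional Wiener process with covariance matrix $\Lambda$, the entries $\Lambda_{j,l}$ being as stated. Since $T_1,\dots,T_M$ were arbitrary, this is the asserted convergence in finite-dimensional distributions.

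The heavy lifting is already contained in Lemmas \ref{covariance-convergence} and \ref{contractions-go-to-0}, so within this proof there is no genuine obstacle; the only points requiring a little attention are that the contraction hypothesis of the Fourth Moment Theorem is a per-component statement, so Lemma \ref{contractions-go-to-0} (phrased for one building block at one time) suffices even for the enlarged family over several times, and that the limiting covariance $(T_i\wedge T_{i'})\Lambda_{j,l}$ must be recognised as that of a Wiener process — in particular as forcing independent increments — rather than being left as an abstract Gaussian vector.
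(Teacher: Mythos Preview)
Your proposal is correct and follows essentially the same route as the paper: verify the hypotheses of the multivariate Fourth Moment Theorem via Lemma~\ref{covariance-convergence} (limiting covariances) and Lemma~\ref{contractions-go-to-0} (vanishing contractions), then read off the Gaussian limit. The only cosmetic difference is that you apply Theorem~\ref{fourth-moment-theorem} directly to the family enlarged over the times $T_1,\dots,T_M$ and then identify the covariance $(T_i\wedge T_{i'})\Lambda_{j,l}$ as that of a Wiener process, whereas the paper phrases this step through an appeal to the Cramer--Wold device; since Theorem~\ref{fourth-moment-theorem} is already stated in multivariate form, your formulation is arguably the cleaner of the two.
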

\begin{proof}
By the Fourth Moment Theorem \ref{fourth-moment-theorem} and an application of the Cramer Wold Theorem  \ref{Cramer-Wold-theorem} the claim follows by  Lemma \ref{covariance-convergence} and Lemma \ref{contractions-go-to-0}.
\end{proof}

\subsection{Long range dependent case}
In this section we establish the convergence of building blocks in finite dimensional distributions in the long range dependent setting. Thus, we deal with terms of the form $ \epsilon^{H^*(d)} \int_0^{\f T \epsilon }g^{k,r}_t dt$ such that $\delta(k,r)=d$ and $H^*(d) > \f 1 2$. Our main tool to prove this convergence is $L^2$ kernel convergence.

\subsubsection{Spectral representation}\label{subsubsection-spectral-measure}

In this section we  use a connection between iterated Wiener integrals and a Gau{\ss}ian complex-valued random spectral measure $\hat{B}$ such that for all Borel sets the following holds, $\E \left[ \hat{B}(A) \right] = 0$, $ \E \left[ \hat{B}(A_1) \hat{B}(A_2)  \right] = \lambda( A_1 \cap A_2)$, $  \hat{B}(A)  = \overline{ \hat{B}(-A) }$ and for disjoint Borel sets $\hat{B}( \cup_{j=1}^n A_j) = \sum_{j=1}^n \hat{B}(A_j)$.
The random variables $\Re \left(\hat{B}(A)\right) $ and $\Im\left(  \hat{B}(A)\right)$ are independent Gau{\ss}ians with zero mean and variance $\f {\lambda(A)} {2}$. 
One can now define multiple Wiener integrals for even square integrable complex valued symmetric functions, meaning for $\hat{\xi_j} \in \R$,
$f(\hat{\xi}_1, \hat{\xi_2}, \dots, \hat{\xi_d}) = \overline { f(-\hat{\xi}_1, -\hat{\xi_2}, \dots, -\hat{\xi_d}) }$
and 
$ \int_{\R^d} d^d\hat{\xi} \vert f(\hat{\xi}_1,  \dots, \hat{\xi_d}) \vert^2 < \infty.$ Denoting the space of these functions by $\mathcal{CH}_m$ we can define a scalar product on $\mathcal{CH}_m$ as follows. For $f,g \in \mathcal{CH}_m$ let,
$$
\< f, g \>_{\mathcal{CH}_m} =  \int_{\R^d} d^d \hat{\xi} \,\, f(\hat{\xi}_1, , \dots, \hat{\xi_d}) \overline{g(\hat{\xi}_1,  \dots, \hat{\xi_d})}.
$$
This  gives rise to an isometric mapping $\hat{I}_d: \mathcal{CH}_m \to L^2(\Omega)$ via,
$$
\hat{I}_d(f) = d! \int_{\R^d} d^d\hat{B}_{\hat{\xi}} \, \,  f(\hat{\xi}_1, , \dots, \hat{\xi_d}), 
$$
see \cite{Diu-Tran,Major_mult_wiener_integrals,Dobrushin}. The next lemma gives us a way to relate $I_d$ and $\hat{I}_d$.

\begin{lemma}[\cite{Taqqu} Lemma 6.1]\label{lemma-spectral-representation}
	Let $h \in L^2(\R^d,\lambda)$ be a real valued symmetric function and denote its Fourier transform by $\hat{h}$, then, 
	$$I_d(h)=\int_{\R^d}d^d B_{\xi} \,\, h(\xi_1, \xi_2, \dots, \xi_{d}) = \int_{\R^d} d^d \hat{B}_{\hat{\xi}} \,\, \hat{h}(\hat{\xi}_1, \hat{\xi}_2, \dots, \hat{\xi}_d) = \hat{I}_d(\hat{h}), $$
	where $\hat{B}$ is a Gau{\ss}ian complex valued random spectral measure given as above and the second equality is to be understood in law.
\end{lemma}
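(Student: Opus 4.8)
The plan is to reduce the identity to the first Wiener chaos, where it is the classical spectral representation of (real) white noise, and then to bootstrap to arbitrary $d$ using three facts: symmetrised tensor products of $L^2(\R,\lambda)$-functions are dense in the symmetric subspace of $L^2(\R^d,\lambda)$; the Fourier transform sends a tensor product to the tensor product of the Fourier transforms (and commutes with symmetrisation); and both $h \mapsto I_d(h)$ and $\hat h \mapsto \hat I_d(\hat h)$ are $L^2$-continuous, by the $L^2$-isometry relations defining $I_d$ in Section~\ref{section-preliminary} and its counterpart for $\hat I_d$.

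\textbf{Base case $d=1$.} The map $h \mapsto I_1(h) = \int_{\R} h\, dB$, defined on real $h \in L^2(\R,\lambda)$, and the map $\hat h \mapsto \hat I_1(\hat h) = \int_{\R} \hat h\, d\hat B$, defined on $\mathcal{CH}_1$, are both linear and take values in the first Wiener chaos, a Gaussian space. For real $h$ the function $\hat h$ is Hermitian, $\hat h(-\hat\xi) = \overline{\hat h(\hat\xi)}$, and since $\hat B(-A) = \overline{\hat B(A)}$ the random variable $\hat I_1(\hat h)$ is real-valued. Fixing the Fourier normalisation so that the transform is an $L^2$-isometry, Plancherel's theorem gives, for real $h,g \in L^2(\R,\lambda)$,
\begin{align*}
\E\big[ I_1(h)\, I_1(g) \big] &= \langle h, g\rangle_{L^2(\R,\lambda)} = \langle \hat h, \hat g\rangle_{L^2(\R,\lambda)} \\
&= \E\big[ \hat I_1(\hat h)\, \overline{\hat I_1(\hat g)} \big] = \E\big[ \hat I_1(\hat h)\, \hat I_1(\hat g) \big].
\end{align*}
Hence the two centred Gaussian families $\{I_1(h)\}$ and $\{\hat I_1(\hat h)\}$ have the same covariance structure; equivalently, $h \mapsto \hat h$ extends to an isometric isomorphism between the two first-chaos Gaussian Hilbert spaces, so that $\big(I_1(h_1),\dots,I_1(h_p)\big) \eqlaw \big(\hat I_1(\hat h_1),\dots,\hat I_1(\hat h_p)\big)$ for any finite family of real $h_i$.

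\textbf{Bootstrapping to $d \geq 2$.} For real $g_1,\dots,g_d \in L^2(\R,\lambda)$, the multiple integral of the symmetrised tensor product equals the Wick product, $I_d(g_1\tilde\otimes\cdots\tilde\otimes g_d) = {:}I_1(g_1)\cdots I_1(g_d){:}$, which is a fixed polynomial in $I_1(g_1),\dots,I_1(g_d)$ whose coefficients depend only on $d$ and the Gram matrix $\big(\langle g_i,g_j\rangle_{L^2(\R,\lambda)}\big)_{i,j}$; for $g_1=\dots=g_d=g$ with $\|g\|_{L^2}=1$ it is the $d$-th Hermite polynomial $H_d(I_1(g))$. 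Since $\widehat{g_1\tilde\otimes\cdots\tilde\otimes g_d} = \hat g_1\tilde\otimes\cdots\tilde\otimes\hat g_d$ and, by Plancherel, $\langle \hat g_i,\hat g_j\rangle = \langle g_i,g_j\rangle$, the \emph{same} polynomial expresses $\hat I_d(\hat g_1\tilde\otimes\cdots\tilde\otimes\hat g_d) = {:}\hat I_1(\hat g_1)\cdots \hat I_1(\hat g_d){:}$ in terms of $\hat I_1(\hat g_1),\dots,\hat I_1(\hat g_d)$. Applying the first-chaos isomorphism of the base case jointly to $g_1,\dots,g_d$ therefore yields $I_d(g_1\tilde\otimes\cdots\tilde\otimes g_d) \eqlaw \hat I_d(\hat g_1\tilde\otimes\cdots\tilde\otimes\hat g_d)$, and, by the same token applied simultaneously to all first-chaos elements occurring in a finite sum, equality in law for any finite real-linear combination of symmetrised tensor products. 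Such combinations are dense in the symmetric subspace of $L^2(\R^d,\lambda)$; approximating a general real symmetric $h$ by a sequence $f_n$ of them, one has $I_d(f_n) \to I_d(h)$ in $L^2(\Omega)$ and, since the Fourier transform is $L^2$-continuous and $\hat f_n, \hat h \in \mathcal{CH}_d$, $\hat I_d(\hat f_n) \to \hat I_d(\hat h)$ in $L^2(\Omega)$. Passing to the distributional limit gives $I_d(h) \eqlaw \hat I_d(\hat h)$, which is the assertion. (The construction in the base case in fact realises a single spectral measure $\hat B$ on the original probability space for which the identity holds almost surely; only the distributional form is needed here.)

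\textbf{Expected main obstacle.} The genuine point of care is that the first-chaos correspondence $I_1(g) \leftrightarrow \hat I_1(\hat g)$ must be used \emph{jointly} over all the functions $g_i$ entering a given tensor product --- and over all tensor products entering a given approximating combination --- so that the distributional equalities compose; this is precisely why the base case is phrased as an isometry of Gaussian Hilbert spaces rather than a one-variable statement. Beyond this, the work is bookkeeping: fixing one Fourier normalisation consistent with the spectral representations used earlier (so that the $e^{it\sum\hat\xi_j}$-kernels and the normalising constants $K(H,m)$, $\hat K(H,m)$ match up), and tracking the factors $d!$ and the symmetrisation constants in the definitions of $I_d$ and $\hat I_d$, none of which affects the identity.
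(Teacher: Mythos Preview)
The paper does not give its own proof of this lemma; it is quoted verbatim from \cite{Taqqu} (Lemma~6.1 there) and used as a black box. Your argument is correct and is essentially the classical route Taqqu takes: identify the two first-chaos Gaussian Hilbert spaces via Plancherel, lift to higher chaoses through the Wick-product\slash Hermite-polynomial representation of multiple integrals of symmetric tensor products, and close by density and $L^2$-continuity of $I_d$ and $\hat I_d$. Your emphasis on phrasing the base case as a \emph{joint} equality of Gaussian families (an isometry of Gaussian Hilbert spaces) is exactly the right point of care, since otherwise the bootstrap to $d\ge 2$ would not go through.
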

This relation also holds in the multivariate setting.
\begin{lemma}\label{lemma-spectral-representation-multi}[ \cite{Taqqu_sums} Lemma A.2 ]
	Let,  for each $j = 1, \dots N$, a symmetric function $h^j \in L^2(\R^{d_j},\lambda)$ be given and denote its Fourier transform by $\hat{h}^j $,
	then, the following equivalence holds in law,
	$$ \left( I_{d_1}(h^1), \dots, I_{d_N}(h^N) \right) =  \left( \hat{I}_{d_1}(\hat{h}^1), \dots, \hat{I}_{d_N}(\hat{h}^N) \right),$$ 
	where $I_d$ and $\hat{I}_d$ denote the maps defined in sections \ref{section-preliminary} and \ref{subsubsection-spectral-measure} respectively.
\end{lemma}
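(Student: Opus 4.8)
\emph{Proof idea.} The plan is to reduce everything to the one-dimensional identity of Lemma \ref{lemma-spectral-representation}, promote it to all chaos orders by a second-quantization argument, and then obtain the joint statement for free because one and the same Gaussian-space isometry is applied to every coordinate. A purely hands-on alternative, via simple kernels and a limiting argument, is also available and is sketched at the end.

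First I would recall that the $d_j$-dimensional Fourier transform sends a real symmetric $h^j \in L^2(\R^{d_j},\lambda)$ to an element $\hat h^j$ that is symmetric in its arguments and Hermitian, $\hat h^j(-\hat\xi) = \overline{\hat h^j(\hat\xi)}$, hence $\hat h^j \in \mathcal{CH}_{d_j}$, and that by Plancherel it is an $L^2$-isometry, $\Vert \hat h^j \Vert_{L^2(\R^{d_j})} = \Vert h^j \Vert_{L^2(\R^{d_j})}$; in particular both sides of the claimed identity are well defined and, by the isometries recalled in Sections \ref{section-preliminary} and \ref{subsubsection-spectral-measure}, have matching second moments. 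On the first chaos the map $I_1(f) \mapsto \hat I_1(\hat f)$ is, by Lemma \ref{lemma-spectral-representation} with $d=1$, an equality in law, and being an $L^2$-isometry it extends to a unitary identification $\mathcal{U}$ of the Gaussian Hilbert space $\{ I_1(f) : f \in L^2(\R) \}$ with $\{ \hat I_1(\hat f) : \hat f \in \mathcal{CH}_1 \}$ which preserves all covariances, hence joint Gaussianity. Using the standard second-quantization construction — equivalently, that Hermite polynomials of first-chaos elements span each chaos, or that the product formula of Lemma \ref{product-formulae} holds verbatim in the real and in the spectral calculus — $\mathcal{U}$ extends to an isomorphism of the whole Wiener chaos preserving the chaos grading. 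One then checks that this extension sends $I_d(h)$ to $\hat I_d(\hat h)$ for every real symmetric $h \in L^2(\R^d)$: it suffices to verify this for $h = f_1 \tilde\otimes \cdots \tilde\otimes f_d$ with $f_i \in L^2(\R)$, where both sides are produced by the same iteration of the product formula (and where the $d$-dimensional Fourier transform of a product is the product of the one-dimensional transforms), and then to extend by linearity and $L^2$-density, using once more that $I_d$ and $\hat I_d$ are, up to the factor $d!$, isometries on symmetric kernels.

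Granted this, the multivariate statement is immediate: $(I_{d_1}(h^1), \dots, I_{d_N}(h^N))$ and $(\hat I_{d_1}(\hat h^1), \dots, \hat I_{d_N}(\hat h^N))$ are images of the \emph{same} family of random variables under the law-preserving map $\mathcal{U}$ (resp. its inverse), so their joint laws coincide. Avoiding the abstract isomorphism, one may instead approximate each $h^j$ in $L^2(\R^{d_j})$ by symmetric simple kernels built from a common finite measurable partition; for such kernels $I_{d_j}$ is an explicit polynomial in finitely many increments of $B$, $\hat I_{d_j}$ the corresponding polynomial in finitely many values of $\hat B$, and the covariance structure of these Gaussian families is matched by the Fourier transform, giving equality of the joint laws of the approximants; one then passes to the limit, noting that $L^2$ convergence of each coordinate forces joint convergence in probability — hence in law — on both sides, while Plancherel ensures that the Fourier-side kernels converge in $L^2$ as well.

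The main obstacle is the middle step: confirming that the Fourier identification is compatible with the higher-order integrals, i.e. that $\mathcal{U}$ sends $I_d(h)$ to $\hat I_d(\hat h)$ rather than only $I_1$ to $\hat I_1$. This forces careful bookkeeping of symmetrizations and contractions, so that the product formula is applied in exactly the same fashion on both sides, and of the Hermitian symmetry that $\hat h$ must satisfy to lie in $\mathcal{CH}_d$. Once this compatibility is in place the joint statement costs nothing, which is precisely why, in the long-range-dependent analysis that follows, we may freely move between the time-domain and spectral representations of the vector of building blocks.
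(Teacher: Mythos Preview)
The paper does not prove this lemma at all; it is stated with a citation to \cite{Taqqu_sums}, Lemma A.2, and no argument is given. So there is no ``paper's own proof'' to compare against.

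Your proposal is a correct and standard route. The essential point is the one you make at the end: once the first-chaos Gaussian families $\{I_1(f):f\in L^2(\R)\}$ and $\{\hat I_1(\hat f):\hat f\in\mathcal{CH}_1\}$ are identified as having the same joint law (which follows because both are centered Gaussian with covariances matched by Plancherel), every multiple integral on either side is a fixed measurable functional of its underlying Gaussian family, so the joint law of any finite collection $(I_{d_1}(h^1),\dots,I_{d_N}(h^N))$ is automatically the same as that of $(\hat I_{d_1}(\hat h^1),\dots,\hat I_{d_N}(\hat h^N))$. Your second-quantization phrasing and the simple-kernel approximation are both valid ways to make this precise. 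One small remark: you invoke Lemma~\ref{lemma-spectral-representation} with $d=1$ to get the first-chaos identification, but that lemma only gives a marginal equality in law; the joint equality of the first-chaos families is what you actually need, and you correctly supply it via the covariance/Plancherel argument rather than the cited lemma. It would be cleaner to state that step directly.
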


\begin{lemma}\label{lemma-Fourier-transform}
	Given $g^{k,r}_t= f_t \tilde{\otimes}_{r_1} f_t \tilde{\otimes}_{r_2} f_t \dots \tilde{\otimes}_{r_k} f_t$ such that $\delta(k,r)=d$, which is a symmetric function in $L^2(\R^d, \lambda)$, then,
	$$
	\hat{g}^{k,r}_t(\hat{\xi}_1, \dots, \hat{\xi}_{d}) = \left( \f {\Gamma(H_0- \f 1 2)} {\sqrt{2 \pi} } \right)^d \sum_{\psi \in \mathcal{S}_r} C_3(\psi,r,H_0) \left( \hat{\phi}^{k,r,\psi}_t \left(\sum_{l=1}^{M_1(\psi)} \hat{\xi}_{1,l} , \dots , \sum_{l=1}^{M_k(\psi)} \hat{\xi}_{k,l}\right) \prod_{j=1}^k \prod_{l=1}^{M_j(\psi)} \hat{\xi}_{j,l}^{\f 1 2 - H_0}  C(\hat{\xi}_{j,l}) \right),
	$$ 
	where $\phi^{k,r,\psi}_t(s) = \1_{s \leq t} \prod_{j=1}^k x(t-s_j)  \prod_{q=1}^{j-1} \vert s_j - s_q \vert^{\beta_{j,q}(\psi) (2H_0-2)}$. Furthermore, the following equivalence holds in law,
	$$ 
	\int_{\R^d} d^dB_{\xi} g^{k,r}_t = \int_{\R^d} d^d \hat{B}_{\hat{\xi}}  \left( \f {\Gamma(H_0- \f 1 2)} {\sqrt{2 \pi} } \right)^d \sum_{\psi \in \mathcal{S}_r} C_3(\psi,r,H_0) \left( \hat{\phi}^{k,r,\psi} \left(\sum_{l=1}^{M_1(\psi)} \hat{\xi}_{1,l} , \dots , \sum_{l=1}^{M_k(\psi)} \hat{\xi}_{k,l}\right) \prod_{j=1}^k \prod_{l=1}^{M_j(\psi)} \hat{\xi}_{j,l}^{\f 1 2 - H_0}   \right).
	$$
\end{lemma}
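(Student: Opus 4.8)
The plan is to compute the Fourier transform of the explicit kernel for $g^{k,r}_t$ provided by Lemma \ref{lemma-contractions-calculated}, and then invoke the spectral representation Lemma \ref{lemma-spectral-representation} to transfer the identity to an identity in law between the ordinary and the complex-valued iterated Wiener integrals. First I would recall from Lemma \ref{lemma-contractions-calculated} that, up to the constants $C_3(\psi,r,H_0)$ and the sum over $\psi \in \mathcal{S}_r$, the kernel $g^{k,r}_t$ is a sum of terms of the form
$$
\int_{[-\infty,t]^k} d^k s \prod_{j=1}^k x(t-s_j) \prod_{l=1}^{M_j(\psi)} (s_j - \xi_{\psi(j,l)})_+^{H_0 - \frac 3 2} \prod_{q=1}^{j-1} \vert s_j - s_q\vert^{\beta_{j,q}(\psi)(2H_0-2)}.
$$
The crucial observation is that each $\xi$-variable $\xi_{\psi(j,l)}$ enters this expression through exactly one factor $(s_j - \xi_{\psi(j,l)})_+^{H_0-\frac 3 2}$, so the $\xi$-dependence factorizes once we fix the "node assignments" encoded in $\psi$ and $M_j(\psi)$. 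Grouping the variables attached to node $j$ together, the term is precisely a function of the form $\phi^{k,r,\psi}_t(s)$ convolved in each $s_j$ slot with the power kernel $(s_j-\cdot)_+^{H_0-\frac 3 2}$ applied to the tuple $(\xi_{j,1},\dots,\xi_{j,M_j(\psi)})$.

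The key computational step is the one-variable Fourier identity for the fractional kernel: the Fourier transform of $u \mapsto u_+^{H_0 - \frac 3 2}$ (interpreted as a tempered distribution, valid since $H_0 \in (\frac 1 2, 1)$ makes $H_0 - \frac 3 2 \in (-1,-\frac 1 2)$) is a constant multiple of $\vert \hat\xi\vert^{\frac 1 2 - H_0} C(\hat\xi)$, where $C(\hat\xi)$ is the appropriate unit-modulus phase factor and the constant is $\Gamma(H_0-\frac 1 2)/\sqrt{2\pi}$; this is the standard formula used for the Mandelbrot–Van Ness to spectral conversion (cf. \cite{Taqqu, Pipiras-Taqqu}). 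Applying this to each of the $d = \sum_j M_j(\psi)$ factors accounts for the prefactor $\left(\Gamma(H_0-\frac 1 2)/\sqrt{2\pi}\right)^d$ and for $\prod_{j,l}\hat\xi_{j,l}^{\frac 1 2 - H_0} C(\hat\xi_{j,l})$. Since the $s_j$-integration couples the $\xi$-variables attached to node $j$ only through the single argument $s_j$, the remaining $s$-integral turns into $\hat\phi^{k,r,\psi}_t$ evaluated at the partial sums $\sum_{l=1}^{M_j(\psi)}\hat\xi_{j,l}$, by the convolution theorem applied slot by slot. Summing over $\psi$ with the weights $C_3(\psi,r,H_0)$ then gives the claimed formula for $\hat{g}^{k,r}_t$; one checks that the resulting function is even and square-integrable (square-integrability follows because $g^{k,r}_t \in L^2(\R^d,\lambda)$ and the Fourier transform is an isometry on $L^2$), so it lies in $\mathcal{CH}_m$ and Lemma \ref{lemma-spectral-representation} applies to yield the in-law equivalence of the iterated integrals. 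The drop of the phase factors $C(\hat\xi_{j,l})$ in the final displayed in-law identity is justified because the complex spectral measure $\hat B$ satisfies $\hat B(A) = \overline{\hat B(-A)}$, so multiplying a Hermitian kernel by a unit-modulus function that is itself Hermitian does not change the law of $\hat I_d$.

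The main obstacle will be bookkeeping rather than analysis: one must be careful that the partition of the $\hat\xi$-variables into the $k$ groups of sizes $M_1(\psi),\dots,M_k(\psi)$ matches the variable labelling used implicitly in Lemma \ref{lemma-contractions-calculated}, and that the Fourier transform is taken in the sense of symmetric $L^2$ functions (so the symmetrization over $\mathcal{S}_r$ and the subsequent symmetrization inherent in the definition of $\hat I_d$ are compatible). A secondary subtlety is the distributional nature of the Fourier transform of $u_+^{H_0-3/2}$: since this kernel is not in $L^1$, the identity must be read either in the tempered-distribution sense or, as is cleaner here, by first regularizing (e.g. multiplying $x$ by a compactly supported factor or using the well-definedness of $g^{k,r}_t$ in $L^2$) and then passing to the limit using the $L^2$-continuity of the Fourier transform; I would phrase this as "the computation is the same as in the Mandelbrot–Van Ness case treated in \cite{Taqqu, Pipiras-Taqqu}" and refer to those sources for the distributional justification.
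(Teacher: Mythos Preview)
Your proposal is correct and follows essentially the same route as the paper: start from the explicit kernel in Lemma~\ref{lemma-contractions-calculated}, use the one-variable Fourier identity for $u_+^{H_0-3/2}$ in each of the $d$ slots, collect the resulting phases into $\hat\phi^{k,r,\psi}_t$ evaluated at the partial sums, and then invoke Lemma~\ref{lemma-spectral-representation} together with the invariance $C(\hat\xi)\,d\hat B_{\hat\xi}\sim d\hat B_{\hat\xi}$. The only cosmetic difference is in the regularization step: the paper handles the non-integrability of $u_+^{H_0-3/2}$ by an explicit truncation $g^{k,r,K}_t=g^{k,r}_t\,\mathbf 1_{[-K,K]^d}$ in the $\xi$-variables, bounds the resulting oscillatory integrals $Q_{\hat\xi}(a,b)$ uniformly in $K$ via Taqqu's estimate, verifies $\phi^{k,r,\psi}_t\in L^1(\R^k)$ by the same H\"older trick as in Lemma~\ref{wachstum-konstanten}, and then passes to the limit $K\to\infty$ by dominated convergence---whereas you gesture at the tempered-distribution interpretation and defer to \cite{Taqqu, Pipiras-Taqqu}. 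Both are fine; the paper's version is just the concrete instantiation of what you describe.
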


\begin{proof}
	By section \ref{section-decomposition} 
	$$ g^{k,r}_t =  \sum_{\psi \in \mathcal{S}_r}  C_3(\psi,r,H_0) \int_{[-\infty,t]^k}  d^k s \prod_{j=1}^k
	x(t-s_j) 
	\prod_{l=1}^{M_j(\psi)} (s_j-\xi_{\psi(j,l)})^{H_0 - \f 3 2} 
	\prod_{q=1}^{j-1 }\vert s_j - s_q \vert^{\beta_{j,q}(\psi) (2H_0 -2 )}.$$
	As Fourier transforming commutes with symmetrizing we again restrict our computation to one $\psi$ as our computations are independent of the choice for $\psi$. Thus, we  suppress $\psi$ in our notation as well the constants $C_3$.
	Following Taqqu \cite{Taqqu}, care is needed as $\xi_{j,l}^{H_0-\f 3 2}$ does neither belong to $L^1$ nor $L^2$. Instead we first consider $g_t^{k,r,K}(\xi) = g^{k,r}_t(\xi) \1_{[-K,K]^d}(\xi)$.
	Substituting $ u_{j,l} = s_j - \xi_{j,l} $ leads us to,
	\begin{align*}
	&\hat{g}_t^{k,r,K} = \f{1}{(2\pi)^{\f d 2}} \int_{\R^d}d^d \xi \, \,  e^{i \sum_{j,l} \hat{\xi}_{j,l} \xi_{j,l}} \int_{[-\infty,t]^k} d^k s \prod_{j=1}^k x(t-s_j) 
	\prod_{l=1}^{M_j} (s_j-\xi_{j,l})_{+}^{H_0-\f 3 2} \1_{[0,t]}(s_j) 
	\prod_{q=1}^{j-1 }\vert s_j - s_q \vert^{\beta_{j,q} (2H_0-2)}\\
	&= \f{1}{(2\pi)^{\f d 2}} \int_{\R^d} d^d u \int_{[-\infty,t]^k} d^k s e^{i \sum_{j,l} \hat{\xi}_{j,l} (s_j-u_{j,l})}  \prod_{j=1}^k x(t-s_j) 
	\prod_{l=1}^{M_j} (u_{j,l})_{+}^{H_0-\f 3 2}  \1_{ [s_j - K, s_j +K]  }(u_{j,l})
	\prod_{q=1}^{j-1 }\vert s_j - s_q \vert^{\beta_{j,q} (2H_0-2)} \\
	&= \f{1}{(2\pi)^{\f d 2}}\int_{\R^d} d^d u \int_{[-\infty,t]^k} d^k s e^{- i \sum_{j,l} \hat{\xi}_{j,l} u_{j,l}}   \prod_{j=1}^k \prod_{l=1}^{M_j} (u_{j,l})_{+}^{H_0-\f 3 2} \1_{ [s_j - K, s_j +K]  }(u_{j,l})  e^{\sum_{j,l} \hat{\xi}_{j,l} s_j}  x(t-s_j) 
	\prod_{q=1}^{j-1 }\vert s_j - s_q \vert^{\beta_{j,q} (2H_0-2)}.
	\end{align*}
	
	Although we can not separate the $u$ and $s$ integrals as  $e^{- i \sum_{j,l} \hat{\xi}_{j,l} u_{j,l}}   \prod_{j=1}^k \prod_{l=1}^{M_j} (u_{j,l})_{+}^{H_0-\f 3 2} \1_{ [s_j - K, s_j +K]  }(u_{j,l})$ depends on $s$, as $K \to \infty$ this dependency  vanishes, thus, we go ahead and analyse its behaviour as $K \to \infty$. The $u_{j,l}$'s terms split so we can restrict ourselves to 
	$$
	Q_{\hat{\xi}_{j,l}}(a,b) = \f{1} {\sqrt{2\pi}} \int_a^b e^{i u_{j,l} \hat{\xi}_{j,l}} u_{j,l}^{H_0 - \f 3 2} du_{j,l},
	$$
	for $0 \leq a \leq b < \infty$.
	The following estimate was shown in \cite{Taqqu}, 
	
	$$
	\sup_{0 \leq a \leq b < \infty} \vert Q_{\hat{\xi}_{j,l}}(a,b) \vert\leq \f{1}{\sqrt{2\pi}} \left( \f {1} {H_0 -\f 1 2} + \f {2} { \vert \hat{\xi}_{j,l} \vert} \right).
	$$
	Thus,
	\begin{align*}
	&\vert \hat{g}^{k,r,K}_t(\hat{\xi}) \vert \leq \\
	&\int_{[-\infty,t]^k} d^k s \prod_{j=1}^k \vert x(t-s_j) 
	\prod_{q=1}^{j-1 }\vert s_j - s_q \vert^{\beta_{j,q} (2H_0-2)} \vert \prod_{j,l} B_{\hat{\xi}_{j,l}}(\max(0,s_j-K),\max(0,s_j+K)) \vert\\
	&\leq \int_{[-\infty,t]^k} d^k s \vert \phi_t^{k,r}(s) \vert \prod_{j,l} \f {1} {\sqrt{2\pi}} \left(\f{1} { H_0 - \f 1 2} + \f {2} { \hat{\xi}_{j,l}} \right).
	\end{align*}

	By arguments similar to the ones in Lemma \ref{lemma-variance-asymptotics} one can show $\phi^{k,r}_t \in L^1(\R^k,\lambda)$.
	We compute,
	\begin{align*}
	\Vert \phi^{k,r}_t \Vert_{L^1(\R^k, \lambda)} &= \int_{[-\infty,t]^k} d^k s \prod_{j=1}^k \vert x(t-s_j) \vert
	\vert s_k - s_j \vert^{\beta_{k,j} (2H_0-2)}.
	\end{align*}
	Looking at the integral with respect to $s_k$, and again pulling the kernels $x(t-s_j)$, with the right exponents, in the integral, we obtain, setting $ B_k = m - \sum_{j=1}^{k} \beta_{k,j},$
	\begin{align*}
	 &\int_{-\infty}^{t} ds_k \vert x(t-s_k)\vert^{ \f {B_k} {m} }   \prod_{j=1}^k  \vert x(t-s_k)\vert^{ \f {\beta_{k,j}} {m} } \vert  x(t-s_j)\vert^{ \f {\beta_{k,j}} {m} } 
	\vert s_k - s_j \vert^{\beta_{k,j} (2H_0-2)}\\
	&\leq \left( \int_{-\infty}^{t} ds_k \vert x(t-s_k)\vert^{ \f {B_k} {m} } \right) \prod_{j=1}^k \left( \int_{-\infty}^{t} ds_k  \vert x(t-s_k) x(t-s_j) \vert
	\vert s_k - s_j \vert^{ (2H_0-2)} \right)^{ \f {\beta_{k,j}} {m}  }\\
	&\leq  \Vert x \Vert_{L^1(\R,\lambda)}^{ \f {B_k} {m} }  \prod_{j=1}^k \left( \int_{-\infty}^{t} ds_k  \vert x(t-s_k) x(t-s_j) \vert
	\vert s_k - s_j \vert^{ (2H_0-2)} \right)^{ \f {\beta_{k,j}} {m}  }
	\end{align*}
	Iterating this procedure as in section \ref{section-variance-computation} we finally obtain
		\begin{align*}
	\Vert \phi^{k,r}_t \Vert_{L^1(\R^k, \lambda)} &\leq  \Vert x \Vert_{L^1(\R,\lambda)}^{ \sum_{j=1}^k \f {B_j} {m} }  \prod_{j,q=1}^k \left( \int_{-\infty}^{t} ds  \int_{-\infty}^{t} dr \vert x(t-s) x(t-r)
	\vert s - r \vert^{ (2H_0-2)} \right)^{ \f {\beta_{j,q}} {m}  }\\
	&\leq \Vert x \Vert_{L^1(\R,\lambda)}^{ \f {d} {m} }   \Vert x \Vert_{ \vert \mathcal{H} \vert}^{ \f {m-d} {m} }.
	\end{align*}

	Thus, $\hat{g}^{k,r,K}_t$ is finite and uniformly bounded with respect to $K$ as soon as all $\hat{\xi}_{j,l}$ are different from $0$.
	By the substitution $u \to u \vert \vert \hat{\xi}_{j,l} \vert$, we obtain
	\begin{align*}
	\hat{g}^{k,r}_t &= \lim_{K \to \infty} \hat{g}_t^{k,r,K} \\
	&= \int_{[-\infty,t]^k} d^ks  e^{i\sum_{j,l} \hat{\xi}_{j,l} s_j} \prod_{j=1}^k     x(t-s_j) 
	\prod_{q=1}^{j-1 }\vert s_j - s_q \vert^{\beta_{j,q} (2H_0-2)}    \prod_{j,l} \f{1} {\sqrt{2\pi}} \int_0^{\infty} e^{-i\hat{\xi}_{j,l} u} u^{H_0-\f 3 2} du\\
	&= \int_{[-\infty,t]^k} d^ks  e^{i\sum_{j,l} \hat{\xi}_{j,l} s_j} \prod_{j=1}^k     x(t-s_j) 
	\prod_{q=1}^{j-1 }\vert s_j - s_q \vert^{\beta_{j,q} (2H_0-2)} \prod_{j,l} \hat{\xi}_{j,l}^{\f 1 2 - H_0} \f{1} {\sqrt{2\pi}} \int_0^{\infty} e^{-iu \, sign(\hat{\xi}_{j,l})} u^{H_0 - \f 3 2} du\\
	&= \int_{[-\infty,t]^k} d^ks  e^{i\sum_{j,l} \hat{\xi}_{j,l} s_j} \prod_{j=1}^k     x(t-s_j) 
	\prod_{q=1}^{j-1 }\vert s_j - s_q \vert^{\beta_{j,q} (2H_0-2)} \prod_{j,l} \hat{\xi}_{j,l}^{\f 1 2 - H_0} \Gamma(H_0-\f 1 2) C(\hat{\xi}_{j,l})\\
	&= \hat{\phi}^{k,r}_t \left(\sum_{l=1}^{M_1} \hat{\xi}_{1,l} , \dots , \sum_{l=1}^{M_k} \hat{\xi}_{k,l}\right) \prod_{j,l} \hat{\xi}_{j,l}^{\f 1 2 - H_0} \Gamma(H_0-\f 1 2) C(\hat{\xi}_{j,l}).
	\end{align*} 
	For the identity $ \f{1} {\sqrt{2\pi}} \int_0^{\infty} e^{-iu \, sign(\hat{\xi}_{j,l})} u^{H_0 - \f 3 2} du =  \Gamma(H_0-\f 1 2) C(\hat{\xi}_{j,l})$ we refer to the appendix of \cite{Diu-Tran}. Furthermore,  $C(\hat{\xi}) = e^{i \f {\pi } {2} (H_0 - \f 1 2) }$ for $\hat{\xi}>0$, thus,  $C(-\hat{\xi}) = \overline{C(\hat{\xi})}$ and $\vert C(\hat{\xi}) \vert =1$ for $\hat{\xi} \not = 0$. Hence,  see \cite{Dobrushin},  $C(\hat{\xi}_{j,l}) dW_{\hat{\xi}_{j,l}} \sim dW_{\hat{\xi}_{j,l}}$. This concludes the proof.
\end{proof}

So, what did we gain from dealing with $\hat{g}^{k,r}$ instead of $g^{k,r}$? In the kernel representation for $g^{k,r}$ we see that the $s$ variables are in a way convoluted with the $\xi$ ones. As we saw in the graph picture above, when computing $L^2$ norms we could entangle them with our mince knife, however, for obtaining kernel convergence this method is too harsh. Nevertheless, as Fourier transforms change convolutions to multiplications this in a sense entangles the edges between the graphs from the ones within each graph.

\subsubsection{Kernel Convergence}

In the long range dependent case we set, for $T \in [0,1]$, 
$$   
\bar{g}^{k,r,\epsilon}_{T} = \epsilon^{H^*(d)} \int_0^{\f T \epsilon} g^{k,r}_t dt,
$$
where $d=\delta(k,r)$.
\begin{proposition}\label{proposition-joint-wiener-building-blocks-lrd}
	Given the process $ \left( I_{d_{1}}(\bar{g}^{k_1,r^1,\epsilon}_{T}) , \dots , I_{d_{L}}(\bar{g}^{k_L,r^L,\epsilon}_{T} ) \right)$, for which $d_j= \delta(k_j,r^j)$ such that for each $1 \leq j \leq L$, $H^*(d_j) > \f 1 2$, and $ T \in [0,1]$, then,
	$$ 
	\left(  I_{d_{1}}(\bar{g}^{k_1,r^1}_{T}) , \dots, I_{d_{L},k}(\bar{g}^{k_L,r^l}_{T} ) \right)  \to \left( \kappa_1 Z^{H^*(d_1),d_1}_T , \dots,  \kappa_L Z^{H^*(d_L),d_L}_T \right),
	$$
	where $\kappa_j = \lim_{\epsilon \to 0} \Vert I_{d_{j}}(\bar{g}^{k_j,r^j}_{1}) \Vert_{L^2}$  and  $\left( Z^{H^*(d_1),d_1}_T , \dots, Z^{H^*(d_L),d_L}_T \right)$ is a multidimensional Hermite process such that each component is defined via the same Wiener process, in the sense of finite dimensional distributions. 
\end{proposition}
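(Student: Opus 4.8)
The strategy is to prove convergence in finite-dimensional distributions via $L^2$-convergence of \emph{spectral} kernels, as in \cite{Diu-Tran,Taqqu}, using that $L^2$-convergence of kernels forces convergence in law of the associated vector of multiple Wiener--It\^o integrals. By Lemma \ref{lemma-spectral-representation-multi} the law of $\big(I_{d_1}(\bar g^{k_1,r^1,\epsilon}_T),\dots,I_{d_L}(\bar g^{k_L,r^L,\epsilon}_T)\big)$ is unchanged if for each $j$ we write $I_{d_j}(\bar g^{k_j,r^j,\epsilon}_T)=\hat I_{d_j}(\hat f^{j,\epsilon}_T)$ with $\hat f^{j,\epsilon}_T=\epsilon^{H^*(d_j)}\int_0^{T/\epsilon}\hat g^{k_j,r^j}_t\,dt$, the \emph{same} complex random spectral measure $\hat B$ being used throughout. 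Inserting the formula for $\hat g^{k,r}_t$ of Lemma \ref{lemma-Fourier-transform} and substituting $s_{j'}=t-\sigma_{j'}$, one sees that $\hat g^{k,r}_t=e^{it\bar\xi}\cdot(\text{a $t$-independent kernel})$ with $\bar\xi=\sum_{j',l}\hat\xi_{j',l}$, so the time integration contributes only the factor $\big(e^{i(T/\epsilon)\bar\xi}-1\big)/(i\bar\xi)$. I then rescale $\hat\xi_{j',l}\mapsto\epsilon\,\hat\eta_{j',l}$: the time factor produces $\epsilon^{-1}$, the $d_j$ factors $\hat\xi_{j',l}^{1/2-H_0}$ produce $\epsilon^{d_j(1/2-H_0)}$, the change of spectral measure produces $\epsilon^{d_j/2}$ in law, and together with the prefactor $\epsilon^{H^*(d_j)}$ and the identity $H^*(d_j)=(H_0-1)d_j+1$ the total power of $\epsilon$ vanishes. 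Hence $\hat f^{j,\epsilon}_T$ is, up to the fixed constant $\big(\Gamma(H_0-\frac12)/\sqrt{2\pi}\big)^{d_j}$, the sum over $\psi\in\mathcal S_{r^j}$ of $C_3(\psi,r^j,H_0)\,\Psi^{k_j,r^j,\psi}\!\big(\epsilon\sum_l\hat\eta_{1,l},\dots,\epsilon\sum_l\hat\eta_{k_j,l}\big)$ times the rank-$d_j$ spectral Hermite kernel $\frac{e^{iT(\hat\eta_1+\cdots+\hat\eta_{d_j})}-1}{i(\hat\eta_1+\cdots+\hat\eta_{d_j})}\prod_{l=1}^{d_j}|\hat\eta_l|^{1/2-H_0}C(\hat\eta_l)$, where $\Psi^{k,r,\psi}$ is the partial Fourier transform over $[0,\infty)^k$ of $\prod_{j'}x(\sigma_{j'})\prod_{q<j'}|\sigma_{j'}-\sigma_q|^{\beta_{j',q}(\psi)(2H_0-2)}$, and we have reindexed the $d_j$ variables $\hat\eta_{j',l}$ as $\hat\eta_1,\dots,\hat\eta_{d_j}$ (legitimate since the displayed kernel is symmetric; recall $1+\frac{H^*(d_j)-1}{d_j}=H_0$, so $\frac12-H_0$ is precisely the exponent in the spectral representation of $Z^{H^*(d_j),d_j}$).

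The heart of the argument is to show $\hat f^{j,\epsilon}_T\to\hat f^j_T$ in $L^2(\R^{d_j},\lambda)$ as $\epsilon\to0$, where $\hat f^j_T=\kappa_j$ times the rank-$d_j$ Hermite kernel above and $\kappa_j$ collects $\big(\Gamma(H_0-\frac12)/\sqrt{2\pi}\big)^{d_j}$, the constants $C_3(\psi,r^j,H_0)$, the Hermite normalisation $\hat K(H^*(d_j),d_j)^{-1}$, and the limits $\Psi^{k_j,r^j,\psi}(0)=\int_{[-\infty,t]^{k_j}}d^{k_j}s\,\phi^{k_j,r^j,\psi}_t$. The \emph{pointwise} limit is immediate: as $\epsilon\to0$ the arguments $\epsilon\sum_l\hat\eta_{j',l}$ tend to $0$ and, by dominated convergence, $\Psi^{k_j,r^j,\psi}(\epsilon\,\cdot\,)\to\Psi^{k_j,r^j,\psi}(0)$, so that the kernel $x$ and the within-graph powers $|\sigma_{j'}-\sigma_q|^{\beta_{j',q}(2H_0-2)}$ contribute only an overall multiplicative constant in the limit. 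The $L^2$-domination, uniform in $\epsilon$, follows from the $\epsilon$- and $\hat\eta$-independent estimate $|\Psi^{k,r,\psi}(w)|\le\Vert\phi^{k,r}_t\Vert_{L^1(\R^k,\lambda)}\le\Vert x\Vert_{L^1}^{d_j/m}\Vert x\Vert_{\vert\mathcal H\vert}^{(m-d_j)/m}<\infty$ obtained inside the proof of Lemma \ref{lemma-Fourier-transform}, together with the square-integrability of the rank-$d_j$ Hermite kernel, which holds precisely because $H^*(d_j)\in(\frac12,1)$; a generalised dominated convergence theorem then yields the $L^2$-convergence. I expect this to be the main obstacle: it is the kernel-level counterpart of the ``mince-knife'' graph estimates of Lemma \ref{lemma-variance-asymptotics}, and one must carry both the pointwise limit and the domination through the finite sum over $\psi$ while controlling the behaviour near the singular set $\{\hat\eta_l=0\}\cup\{\hat\eta_1+\cdots+\hat\eta_{d_j}=0\}$.

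Finally, since all the $\hat f^{j,\epsilon}_T$ and $\hat f^j_T$ are integrated against the \emph{same} $\hat B$, the $L^2$-convergence of kernels --- for each fixed $T$, and jointly for finitely many values of $T$ because the $T$-dependence enters only through the continuous factor $(e^{iT\bar\eta}-1)/(i\bar\eta)$ with $\bar\eta=\hat\eta_1+\cdots+\hat\eta_{d_j}$ --- gives convergence in finite-dimensional distributions of the vector towards $\big(\hat I_{d_1}(\hat f^1_T),\dots,\hat I_{d_L}(\hat f^L_T)\big)$, every component of which is built from one and the same Wiener process (the one underlying $\hat B$); by Lemma \ref{lemma-spectral-representation} this limit equals $\big(\kappa_1 Z^{H^*(d_1),d_1}_T,\dots,\kappa_L Z^{H^*(d_L),d_L}_T\big)$. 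The identification $\kappa_j=\lim_{\epsilon\to0}\Vert I_{d_j}(\bar g^{k_j,r^j}_1)\Vert_{L^2}$ is then a by-product: $\Vert I_{d_j}(\bar g^{k_j,r^j,\epsilon}_1)\Vert_{L^2}=\Vert\hat f^{j,\epsilon}_1\Vert_{L^2}\to\Vert\hat f^j_1\Vert_{L^2}=\kappa_j$, using that $Z^{H^*(d_j),d_j}$ has unit variance at time $1$.
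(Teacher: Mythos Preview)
Your proposal is correct and follows essentially the same route as the paper's proof: pass to the spectral representation via Lemmas~\ref{lemma-spectral-representation-multi} and~\ref{lemma-Fourier-transform}, perform the three changes of variables ($s\mapsto t-\sigma$, scaling of the spectral variables by $\epsilon$, self-similarity of $\hat B$) to reduce to a fixed kernel with an $\epsilon$-dependent factor $\Psi^{k,r,\psi}(\epsilon\,\cdot)$, and conclude $L^2$-convergence of kernels to the Hermite spectral kernel. The only cosmetic difference is that you integrate in $t$ before rescaling and close via dominated convergence with the explicit majorant $\Vert\phi^{k,r}_t\Vert_{L^1}\cdot|\text{Hermite kernel}|$, whereas the paper rescales first and then runs a Cauchy-in-$L^2(\Omega)$ argument; both rely on the same $L^1$ bound from Lemma~\ref{lemma-Fourier-transform} and the same simultaneous rescaling of $\hat B$ to obtain the joint law.
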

\begin{proof}
 	By Lemma \ref{lemma-Fourier-transform} the Fourier transform of a building block is given by
	$$
	\hat{g}^{k,r}_t = \left( \f {\Gamma(H_0- \f 1 2)} {\sqrt{2 \pi} } \right)^d  \sum_{\psi \in \mathcal{S}_r} C_3(\psi,r,H_0)  \hat{\phi}^{k,r,\psi}_t \left(\sum_{l=1}^{M_1(\psi)} \hat{\xi}_{1,l} , \dots , \sum_{l=1}^{M_k(\psi)} \hat{\xi}_{k,l}\right) \prod_{j=1}^k \prod_{l=1}^{M_j(\psi)} \hat{\xi}_{j,l}^{\f 1 2 - H_0}  C(\hat{\xi}_{j,l}),
	$$
	where the constants $C(\hat{\xi}_{j,l})$ can be absorbed into the measures.
	We perform the following computations for one building block, however, the changes of variables can be done simultaneously for all components and hence preserve equivalence in law.
	As usual we drop constants and restrict ourselves to one choice of $\psi$ as our analysis is independent of this choice. Denoting by $\hat{\bar{g}}^{k,r,\epsilon}_T $ the Fourier transform of $\bar{g}^{k,r,\epsilon}_T $, we compute,
	\begin{align*}
	\hat{I}_d\left(\hat{\bar{g}}^{k,r,\epsilon}_T \right)=&\epsilon^{H^*(d)}  \int_0^{\f T \epsilon} dt \int_{\R^d} d^d \hat{B}_{\hat{\xi}} \, \,  \prod_{j=1}^{d} \vert \hat{\xi}_j \vert^{{\f 1 2 - H_0}} \hat{\phi}^{k,r}_t \left(\sum_{l=1}^{M_1} \hat{\xi}_{1,l} , \dots , \sum_{l=1}^{M_k} \hat{\xi}_{k,l}\right)\\
	&=\epsilon^{H^*(d)}  \int_0^{\f T \epsilon} dt \int_{\R^d} d^d \hat{B}_{\hat{\xi}} \, \,  \prod_{j=1}^{d} \vert \hat{\xi}_j \vert^{{\f 1 2 - H_0}}  \int_{[-\infty,t]^k} d^ks  e^{\sum_{j,l} \hat{\xi}_{j,l} s_j} \prod_{j=1}^k     x(t-s_j) 
	\prod_{q=1}^{j-1 }\vert s_j - s_q \vert^{\beta_{j,q} (2H_0-2)}.
	\end{align*}
	In the following we use the self-similarity of $\hat{B}$, namely, $\hat{B}(\epsilon \hat{\xi}) = \epsilon^{\f 1 2}  \hat{B}( \hat{\xi})$, cf. \cite{Dobrushin,Dobrushin-Major}.
	Applying the changes of variables  $ t \to \epsilon t$,  $\hat{\xi}_{j,l} \to \f 1 \epsilon \hat{\xi}_{j,l}$ and  $ s_j \to \f { t }  {\epsilon} - s_j$ we obtain,
	\begin{align*}
	\hat{I}_d\left(\hat{\bar{g}}^{k,r,\epsilon}_T \right)
	&= \epsilon^{H^*(d)-1}  \int_0^{ T } dt \int_{\R^d} d^d \hat{B}_{\hat{\xi}} \, \,  \prod_{j=1}^{d} \vert \hat{\xi}_j \vert^{{\f 1 2 - H_0}}  \int_{[-\infty,\f t \epsilon]^k} d^ks  e^{i \sum_{j,l} \hat{\xi}_{j,l} s_j} \prod_{j=1}^k     x\left(\f t \epsilon -s_j\right) 
	\prod_{q=1}^{j-1 }\vert s_j - s_q \vert^{\beta_{j,q} (2H_0-2)}\\
	&= \epsilon^{H^*(d)-1 - d H_0 +d}   \int_0^{ T } dt \int_{\R^d} d^d \hat{B}_{\hat{\xi}} \, \,  \prod_{j=1}^{d} \vert \hat{\xi}_j \vert^{{\f 1 2 - H_0}}  \int_{[-\infty,\f t \epsilon]^k} d^ks  e^{i \sum_{j,l} \epsilon \hat{\xi}_{j,l} s_j} \prod_{j=1}^k     x\left(\f t \epsilon -s_j\right) 
	\prod_{q=1}^{j-1 }\vert s_j - s_q \vert^{\beta_{j,q} (2H_0-2)}\\
	&=    \int_0^{ T } dt \int_{\R^d} d^d \hat{B}_{\hat{\xi}} \, \,  \prod_{j=1}^{d} \vert \hat{\xi}_j \vert^{{\f 1 2 - H_0}}  \int_{[-\infty,\f t \epsilon]^k} d^ks  e^{i \sum_{j,l} \epsilon \hat{\xi}_{j,l} ( \f { t }  {\epsilon} - s_j)} \prod_{j=1}^k     x\left(s_j\right) 
	\prod_{q=1}^{j-1 }\vert s_j - s_q \vert^{\beta_{j,q} (2H_0-2)}\\
	&=    \int_0^{ T } dt \int_{\R^d} d^d \hat{B}_{\hat{\xi}} \, \,  \prod_{j=1}^{d} \vert \hat{\xi}_j \vert^{{\f 1 2 - H_0}}  \int_{[-\infty,\f t \epsilon]^k} d^ks   e^{i \sum_{j,l} \hat{\xi}_{j,l} t} e^{-i \sum_{j,l} \epsilon \hat{\xi}_{j,l}  s_j} \prod_{j=1}^k     x\left(s_j\right) 
	\prod_{q=1}^{j-1 }\vert s_j - s_q \vert^{\beta_{j,q} (2H_0-2)}\\
	&= \hat{I}_d\left(\mathfrak{g}^{k,r,\epsilon}_T \right)
	\end{align*}
	Now, fix $ T \in [0,1]$. By the $L^2$ isometry property obtained in section \ref{subsubsection-spectral-measure} we can also work with the kernel in order to prove $L^2(\Omega)$ convergence. 
	Hence, by dominated convergence we obtain the pointwise result
	\begin{align*}
	&\int_0^{ T } dt \int_{\R^d} d^d {\hat{\xi}} \, \,  \prod_{j=1}^{d} \vert \hat{\xi}_j \vert^{{\f 1 2 - H_0}}  \int_{[-\infty,\f t \epsilon]^k} d^ks   e^{i \sum_{j,l} \hat{\xi}_{j,l} t} e^{-i \sum_{j,l} \epsilon \hat{\xi}_{j,l}  s_j} \prod_{j=1}^k     x\left(s_j\right) 
	\prod_{q=1}^{j-1 }\vert s_j - s_q \vert^{\beta_{j,q} (2H_0-2)}\\
	&\to  \int_{\R^d}  d^d \hat{\xi}  \f { e^{i t \sum_{j=1}^m \hat{\xi}_j} -1 } { i \sum_{j=1}^m \hat{\xi}_j} \prod_{j=1}^m \vert \hat{\xi}_j \vert^{H_0 - \f 1 2}   \int_{\R^k}  d^ks   \prod_{j=1}^k     x\left(s_j\right) 
	\prod_{q=1}^{j-1 }\vert s_j - s_q \vert^{\beta_{j,q} (2H_0-2)} ,
	\end{align*}
	which is up to the constant $\int_{\R^k}  d^ks   \prod_{j=1}^k     x\left(s_j\right) 
	\prod_{q=1}^{j-1 }\vert s_j - s_q \vert^{\beta_{j,q} (2H_0-2)} $ the spectral representation of $Z^{H^*(d),d}_T$, see Equation \ref{spectral-representation-Hermiteprocesses}.
	We now show  that this sequence is Cauchy in $L^2(\Omega)$. For $\epsilon_2 < \epsilon_1$,
	\begin{align*}
	&\Big \Vert \epsilon_1^{H^*(d)}  \int_0^{\f {T} {\epsilon_1}} \hat{I}_d\left(\hat{g}^{k,r}_{t}\right) dt - \epsilon_2^{H^*(d)} \int_0^{\f {T} {\epsilon_2}} \hat{I}_d\left(\hat{g}^{k,r}_{t}\right) dt  \Big \Vert_{L^2(\Omega)} \\
	&\leq  \int_0^{ T } dt \int_{\R^d} d^d \hat{\xi}  \Bigg ( \prod_{j=1}^{d} \vert \hat{\xi}_j \vert^{{\f 1 2 - H_0}}  \int_{ [-\infty,\f {t} {\epsilon_1}]^k } d^ks   e^{i \sum_{j,l} \hat{\xi}_{j,l} t} e^{-i \sum_{j,l} (\epsilon_1 - \epsilon_2) \hat{\xi}_{j,l}  s_j} \prod_{j=1}^k     x\left(s_j\right) 
	\prod_{q=1}^{j-1 }\vert s_j - s_q \vert^{\beta_{j,q} (2H_0-2)}\\  
	&+  \prod_{j=1}^{d} \vert \hat{\xi}_j \vert^{{\f 1 2 - H_0}}  \int_{[-\infty,\f {t} {\epsilon_2}]^k \setminus [-\infty,\f {t} {\epsilon_1}]^k } d^ks   e^{i \sum_{j,l} \hat{\xi}_{j,l} t} e^{i \sum_{j,l}   \epsilon_2 \hat{\xi}_{j,l}  s_j} \prod_{j=1}^k     x\left(s_j\right) 
	\prod_{q=1}^{j-1 }\vert s_j - s_q \vert^{\beta_{j,q} (2H_0-2)} \Bigg )^2,
	\end{align*}
	which, by dominated convergence, goes to $0$ as $ \epsilon_1, \epsilon_2 \to 0$.
	Hence, for each permutation and $T \in [0,1]$ we obtain convergence in $L^2(\Omega)$. 
	To conclude convergence in finite dimensional distributions we want to argue by the Cramer-Wold Theorem \ref{Cramer-Wold-theorem}.
	Thus given finitely many $T_{i,j} \in [0,1]$, $0 \leq i \leq Q_j$, we need to investigate the vector 
	$$\left(  I_{d_{1}}(\bar{g}^{k_1,r^1}_{T_{1,1}}) , \dots , I_{d_{1}}(\bar{g}^{k_1,r^1}_{T_{Q_1,1}} ) \dots, I_{d_{L}}(\bar{g}^{k_L,r^l}_{T_{Q_L,L}} ) \right).$$
	As mentioned above the changes of variables $ t \to \epsilon t$,  $\hat{\xi}_{j,l} \to \epsilon \hat{\xi}_{j,l}$ and  $ s_j \to \f { t }  {\epsilon} - s_j$ can be performed simultaneously and we obtain the following equivalence in law,
	\begin{align*}
\left(  I_{d_{1}}(\bar{g}^{k_1,r^1}_{T_{1,1}}) , \dots , I_{d_{1}}(\bar{g}^{k_1,r^1}_{T_{Q_1,1}} ) \dots, I_{d_{L}}(\bar{g}^{k_L,r^l}_{T_{Q_L,L}} ) \right) = \left(  I_{d_{1}}(\bar{\mathfrak{g}}^{k_1,r^1}_{T_{1,1}}) , \dots , I_{d_{1}}(\bar{\mathfrak{g}}^{k_1,r^1}_{T_{Q_1,1}} ) \dots, I_{d_{L}}(\bar{\mathfrak{g}}^{k_L,r^l}_{T_{Q_L,L}} ) \right).
	\end{align*}
	By the above each component of the later converges in $L^2(\Omega)$ to the sepctral representation of the proclaimed limit. As $L^2(\Omega)$ convergence in each component implies $L^2(\Omega)$ convergence of the whole vector this concludes the proof. 
\end{proof}

\section{General Functions}
In the previous section we established joint convergence in finite dimensional distributions for our building blocks. As $G(y_s)$ may consist of infinitely many such objects we first prove so called reduction theorems in both the short and long range dependent setup, cf. \cite{Taqqu,Taqqu_sums}. In the Gau{\ss}ian setup it suffices to look at finitely many Hermite polynomials in  the SRD regime and only at the lowest rank one in the LRD case. In our case  each term $(y_s)^k$ could give us a contribution in each chaos up to order $km$, hence, we do not just make a cut-off in the chaos rank, but also in the ranks of the polynomials. However similar to the Gau{\ss}ian case it suffices to consider only the lowest chaos rank contributions in the long range dependent case. Furthermore, we use the decay assumption imposed on the coefficients $c_k$ to ensure that our estimates from sections \ref{section-variance-computation} and \ref{subsection-tightness-Hoelder} can be carried over.

\subsection{Short range dependent case}

\begin{definition}
	Given $G(X) = \sum_{k =0}^{\infty} c_k X^k$ such that $\vert c_k \vert \lesssim \f 1 {k!}$ we denote by $G_{M}(y_t)$ the projection of $\sum_{k=0}^M c_k \left(y_t\right)^k$ onto the first $M$ Wiener chaoses minus the higher polynomial contributions in the $0^{th}$ chaos. Thus, $G_M(y_t) = \sum_{k=0}^{M} c_k \sum_{d=0}^M I_d(h^{d,k}_t) + \sum_{k=M+1}^{\infty}  c_k I_0(h^{0,k}_t). $  
\end{definition}
\begin{remark}
The term $\sum_{k=M+1}^{\infty}  I_0(h^{0,k}_t)$ ensures that $\E\left[  G_M(y_s) \right] = 0$, hence, also $\E \left[ G(y_s) - G_M(y_s) \right]=0$, for centred functions $G$. 
\end{remark}
\begin{lemma}\label{lemma-conditions-theorem-reduction-wiener}
Given $G(X) = \sum_{k=0}^{\infty} c_{k} X^k$ such that $G$ has chaos rank $w \geq 1 $ with respect to $y_t$, $\vert c_k \vert \lesssim \f {1} {k!}$ and $H^*(w) \in (-\infty,\f 1 2 )$, then, 
$$
\lim_{M \to \infty} \limsup_{\epsilon \to 0 } \E \left[ \left( \sqrt{\epsilon} \int_0^{\f T \epsilon} G(y_t) -G_M(y_t) dt \right)^2 \right] = 0.
$$
\end{lemma}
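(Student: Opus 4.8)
This is a ``uniform smallness of the tail'' statement, and the natural route mirrors the proof of Lemma~\ref{lemma-kolmogorv-bound}: decompose $G(y_t)-G_M(y_t)$ into Wiener chaoses, estimate the contribution of each chaos component to the $\sqrt{\epsilon}$\dash rescaled time integral via the decay bound of Lemma~\ref{wachstum-konstanten}, and sum the resulting bounds over chaos levels and polynomial degrees using $|c_k|\lesssim 1/k!$. Concretely, using $(y_t)^k=\sum_{d=0}^{km}I_d(h^{d,k}_t)$ and the definition of $G_M$, the projection of $G(y_t)-G_M(y_t)$ onto the $d$\dash th chaos vanishes for $d=0$, equals $\sum_{k>M}c_k h^{d,k}_t$ for $1\le d\le M$, and equals the full component $\sum_{k\ge\lceil d/m\rceil}c_k h^{d,k}_t$ for $d>M$. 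In particular every surviving term has its polynomial index in a tail ($k>M$, or $d>M$ which already forces $km\ge d>M$), and every chaos level $d$ that occurs satisfies $H^*(d)<\tfrac12$ — for $d\ge w$ this is immediate since $H^*$ is affine and strictly decreasing with $H^*(w)<\tfrac12$, and in the short range regime the finitely many levels $1\le d<w$ behave the same way.

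\textbf{The estimate.} By orthogonality of distinct chaoses and the Wiener--It\^o isometry,
\begin{equation*}
\E\!\left[\Big(\sqrt{\epsilon}\int_0^{T/\epsilon}(G(y_t)-G_M(y_t))\,dt\Big)^{\!2}\right]
=\sum_{d\ge 1}\ \epsilon\sum_{k,k'\in\mathcal I_{d,M}} c_k c_{k'}\int_0^{T/\epsilon}\!\!\int_0^{T/\epsilon}\E\big[I_d(h^{d,k}_t)I_d(h^{d,k'}_{t'})\big]\,dt\,dt',
\end{equation*}
where $\mathcal I_{d,M}=\{k>M\}$ for $1\le d\le M$ and $\mathcal I_{d,M}=\{k\ge\lceil d/m\rceil\}$ for $d>M$. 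Lemma~\ref{wachstum-konstanten} bounds each expectation by $C_4(k,k',m,d)\,(1\wedge|t-t'|^{(2H_0-2)d})$, and since $H^*(d)<\tfrac12$ (i.e.\ $(2H_0-2)d<-1$) the short range case of Lemma~\ref{lemma-variance-asymptotics} gives $\epsilon\int_0^{T/\epsilon}\int_0^{T/\epsilon}(1\wedge|t-t'|^{(2H_0-2)d})\,dt\,dt'\lesssim T$, \emph{uniformly in} $\epsilon$. Hence the displayed quantity is $\lesssim T\sum_{(d,k,k')\in\mathcal T_M}|c_k|\,|c_{k'}|\,C_4(k,k',m,d)$, where $\mathcal T_M$ is the set of triples with $d\ge w$ and at least one of $d,k,k'$ exceeding $M$.

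\textbf{Passing to the limit.} By the summability established inside the proof of Lemma~\ref{lemma-kolmogorv-bound} (the decay $|c_k|\lesssim 1/k!$ against the explicit factorial bound for $C_4$), the triple series $\sum_{d,k,k'}|c_k||c_{k'}|C_4(k,k',m,d)$ converges, so its tail over $\mathcal T_M$ tends to $0$ as $M\to\infty$. Since the bound of the previous paragraph is uniform in $\epsilon$, $\limsup_{\epsilon\to0}\E[\,\cdots\,]\lesssim T\sum_{\mathcal T_M}|c_k||c_{k'}|C_4(k,k',m,d)$, which tends to $0$ as $M\to\infty$; this is the claim.

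\textbf{Main obstacle.} Everything rests on a bound for the time double integral that is \emph{uniform in} $\epsilon$; this is exactly where the short range hypothesis enters — through monotonicity of $H^*$ it makes every surviving chaos level satisfy $(2H_0-2)d<-1$, so $\int_0^{T/\epsilon}(1\wedge|u|^{(2H_0-2)d})\,du$ stays bounded as $\epsilon\to0$ — and one must check carefully that subtracting $G_M$ really does leave only tails of the index set. The second delicate point, inherited from Lemma~\ref{lemma-kolmogorv-bound}, is the \emph{joint} summability of $C_4(k,k',m,d)$ over all three indices at once, which is precisely what the factorial decay of the coefficients is for.
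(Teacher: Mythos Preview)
Your approach is the same as the paper's: expand $G-G_M$ into its chaos components, bound each covariance via Lemma~\ref{wachstum-konstanten}, integrate in time using Lemma~\ref{lemma-variance-asymptotics}, and sum using the factorial decay of the $c_k$. The paper splits the resulting sum into $\sum_{d>M}\sum_{k,k'\ge 0}$ and $\sum_{1\le d\le M}\sum_{k,k'>M}$ and bounds both by tails of $\sum c_kc_{k'}C_4(k,k',m,d)$, exactly as you do.

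One point deserves more care. Your assertion that ``the finitely many levels $1\le d<w$ behave the same way'' is not correct: since $H^*(d)=(H_0-1)d+1$ is strictly \emph{decreasing} with $H^*(0)=1$, levels $d<w$ satisfy $H^*(d)>H^*(w)$ and may well have $H^*(d)\ge\tfrac12$ (indeed $H^*(1)=H_0>\tfrac12$ always). For such $d$ the short-range case of Lemma~\ref{lemma-variance-asymptotics} does not apply and the time double integral is \emph{not} bounded uniformly in $\epsilon$. The chaos-rank hypothesis only says the \emph{full} series $\sum_k c_k h^{d,k}$ vanishes for $d<w$; the tail $\sum_{k>M}c_k h^{d,k}$ need not. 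Your tail set $\mathcal T_M$ then quietly drops these levels by imposing $d\ge w$, which is inconsistent with your own description of $\mathcal I_{d,M}$ for $1\le d\le M$. The paper's proof writes the second sum from $d=1$ and applies the same uniform bound without comment, so it shares this gap; a clean fix is to amend the definition of $G_M$ so that it too has chaos rank $w$ (add back the tail contributions in chaoses $1,\dots,w-1$, just as the definition already does for chaos $0$), after which every surviving level really is $\ge w$ and your argument goes through verbatim.
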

\begin{proof}
\begin{align*}
&\E \left[ \left( \sqrt{\epsilon} \int_0^{\f T \epsilon} G(y_t) -G_M(y_t) dt \right)^2 \right]\\
&=   \epsilon  \int_0^{\f T \epsilon} \int_0^{\f T \epsilon} dt dt'  \E \left[ \left( G(y_t) -G_M(y_t) \right) \left(G(y_{t'}) -G_M(y_{t'}) \right) \right]   \\
&= \epsilon \int_0^{\f T \epsilon} \int_0^{\f T \epsilon} dt dt' \E \Big [ \left( \sum_{k=0}^{\infty} \sum_{d=M+1}^{\infty} c_k I_d\left(h^{d,k}_t \right) + \sum_{k=M+1}^{\infty} \sum_{d=1}^{M} c_k I_d\left(h^{d,k}_t \right) \right) \\ &\left( \sum_{k'=0}^{\infty} \sum_{d'=M+1}^{\infty} c_{k'} I_{d'}\left(h^{d',k'}_{t'} \right) + \sum_{k'=M+1}^{\infty} \sum_{d'=0}^{M} c_{k'} I_{d'}\left(h^{d',k'}_{t'} \right) \right)  \Big ]  \\
&=  \sum_{d=M+1}^{\infty}  \sum_{k,k'=0}^{\infty}  c_k c_{k'} \epsilon \int_0^{\f T \epsilon} \int_0^{\f T \epsilon}  dt dt' \E \left[I_d(h^{d,k}_t)   I_d(h^{d,k'}_{t'}) \right] + \sum_{d=1}^{M}  \sum_{k,k'=M+1}^{\infty}  c_k c_{k'} \epsilon \int_0^{\f T \epsilon} \int_0^{\f T \epsilon} dt dt' \E \left[I_d(h^{d,k}_t)   I_d(h^{d,k'}_{t'}) \right]  \\
&\lesssim \sum_{d=M+1}^{\infty}  \sum_{k,k'=0}^{\infty}  c_k c_{k'} C_4(k,k',m,d)  +  \sum_{d=1}^{M}  \sum_{k,k'=M+1}^{\infty}  c_k c_{k'} C_4(k,k',m,d) .
\end{align*}
The first sum represents the part beloning to chaoses of rank bigger than $M$ and the second one the parts in a low order chaos from high order polynomials. Furthermore, $C_4(k,k',m,d) = \f {\sqrt{(km)!(k'm)!} (m!)^{(k+k')} (k+k')^m ((k+k')m)^2 m^{k+k'} \mathfrak{L}^{(k+k')m}} {d^2}$ satisfies,
\begin{align*}
&\sum_{d=M+1}^{\infty} \sum_{k,k'=0}^{\infty}   c_k c_{k'} C_4(k,k',m,d) + \sum_{d=1}^{M} \sum_{k,k'=M+1}^{\infty} c_k c_{k'} C_4(k,k',m,d) \\
& \lesssim \sum_{d=M+1}^{\infty} \f 1 {d^2} + \sum_{k,k'=M+1}^{\infty} \f {(m!)^{(k+k')} (k+k')^m ((k+k')m)^2 m^{k+k'} \mathfrak{L}^{(k+k')m}} {k! k'!} \\
& \xrightarrow{M \to \infty } 0 ,
\end{align*}
proving the claim.	
\end{proof}

\begin{lemma}\label{Reduktion-SRD}
 Let, for each $j =1, \dots, N$,  a function of the form $G^j(X) = \sum_{k=0}^{\infty} c_{j,k} X^k$ such that  $ \vert c_{j,k} \vert \lesssim \f {1}{k!}$ be given. If for every $M \in \N $,  
$$\left( \sqrt{\epsilon} \int_0^{\f T \epsilon} G_{M}^1(y_t)dt, \dots,  \sqrt{\epsilon} \int_0^{\f T \epsilon} G_{M}^N(y_t) dt \right),
$$
converges in finite dimensional distribution to a Wiener process $W^{1}_{M,T}=(W^1_{M,T}, \dots, W^N_{M,T})$ with covariance structure $\Lambda_M^{j,l}$ and $\lim_{M \to \infty}  \Lambda_M^{j,l} = \Lambda^{j,l}$, then,  
$$
\left( \sqrt{\epsilon} \int_0^{\f T \epsilon} G^1(y_t)dt, \dots,  \sqrt{\epsilon} \int_0^{\f T \epsilon} G^N(y_t) dt \right) 
$$
converges to a Wiener process $W_T$ with covariance structure $\Lambda^{j,l}$.
\end{lemma}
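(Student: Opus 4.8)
The plan is to run the classical approximation argument for convergence in law: approximate $G^j$ by its truncation $G^j_M$, pass to the limit $\epsilon\to0$ for fixed $M$, then let $M\to\infty$, closing the gap with the $L^2$ tail estimate of Lemma \ref{lemma-conditions-theorem-reduction-wiener}. Since the assertion concerns finite dimensional distributions, I would first reduce to one real random variable via the Cram\'er--Wold device (Theorem \ref{Cramer-Wold-theorem}): fix times $T_1,\dots,T_q\in[0,1]$ and reals $(a_{i,j})_{1\le i\le q,\ 1\le j\le N}$, set
$$S_\epsilon:=\sum_{i=1}^q\sum_{j=1}^N a_{i,j}\,\sqrt{\epsilon}\int_0^{\f{T_i}{\epsilon}}G^j(y_t)\,dt,$$
and let $S_{M,\epsilon}$ be the same combination with each $G^j$ replaced by $G^j_M$. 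It then suffices to show that $S_\epsilon$ converges in distribution, as $\epsilon\to0$, to the centred Gaussian with variance $\sigma^2=\sum_{i,i'=1}^q\sum_{j,l=1}^N a_{i,j}a_{i',l}\,(T_i\wedge T_{i'})\,\Lambda^{j,l}$, since reassembling these one-dimensional limits identifies the limiting process as the $N$-dimensional Wiener process with $\E[W^j_TW^l_S]=(T\wedge S)\Lambda^{j,l}$.

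Then I would assemble three ingredients. First, projecting the hypothesised convergence of the truncated functionals onto the functional $(a_{i,j})$ shows, for each fixed $M$, that $S_{M,\epsilon}$ converges in distribution as $\epsilon\to0$ to the centred Gaussian with variance $\sigma_M^2$, namely $\sigma^2$ with $\Lambda^{j,l}$ replaced by $\Lambda_M^{j,l}$; since $\Lambda_M^{j,l}\to\Lambda^{j,l}$ we get $\sigma_M^2\to\sigma^2$, hence convergence in distribution of these Gaussians to the Gaussian with variance $\sigma^2$ as $M\to\infty$, by pointwise convergence of characteristic functions. Second, Minkowski's inequality in $L^2(\Omega)$ bounds $\Vert S_\epsilon-S_{M,\epsilon}\Vert_{L^2(\Omega)}$ by $\sum_{i,j}|a_{i,j}|\,\Vert\sqrt{\epsilon}\int_0^{\f{T_i}{\epsilon}}(G^j(y_t)-G^j_M(y_t))\,dt\Vert_{L^2(\Omega)}$, and Lemma \ref{lemma-conditions-theorem-reduction-wiener} — applicable because each $G^j$ is centred and short range dependent — makes every $\limsup_{\epsilon\to0}$ of these summands tend to $0$ as $M\to\infty$; Chebyshev's inequality then yields $\lim_{M\to\infty}\limsup_{\epsilon\to0}\P(|S_\epsilon-S_{M,\epsilon}|>\delta)=0$ for all $\delta>0$. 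Third, the standard approximation theorem for weak convergence (see, e.g., Billingsley, \emph{Convergence of Probability Measures}, Theorem~3.2) combines the first two to give $S_\epsilon\Rightarrow\mathcal N(0,\sigma^2)$; since the times and coefficients were arbitrary, Cram\'er--Wold then returns the claimed finite-dimensional convergence to the Wiener process with covariance structure $\Lambda^{j,l}$. Alternatively one may bypass the external theorem by telescoping $\E[e^{i\theta S_\epsilon}]-\E[e^{i\theta S}]$ through $\E[e^{i\theta S_{M,\epsilon}}]$ and $\E[e^{i\theta S_M}]$ and bounding the three resulting differences.

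I do not anticipate a real obstacle: the substantive work is already in Lemma \ref{lemma-conditions-theorem-reduction-wiener} and in the hypothesis on the truncations, so what remains is the soft bookkeeping above. The points deserving a line of care are that the hypotheses of Lemma \ref{lemma-conditions-theorem-reduction-wiener}, namely chaos rank at least $1$ and $H^*(w_j)<\f 1 2$, are indeed in force — this is precisely the centred, short range dependent regime tacitly assumed throughout this subsection; that the single-time $L^2$ bound of that lemma transfers to arbitrary linear functionals of the finite-dimensional vector, which Minkowski handles for free since all integrals run from $0$; and that only convergence in finite dimensional distributions is needed here, tightness in $\C^\gamma$ being supplied separately by Proposition \ref{proposition-hoelder}.
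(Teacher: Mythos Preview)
Your proposal is correct and follows essentially the same approach as the paper: reduce to scalars via Cram\'er--Wold, invoke Lemma \ref{lemma-conditions-theorem-reduction-wiener} for the $L^2$ tail control, and close with Billingsley's Theorem 3.2 (the paper's Theorem \ref{billingsley-3.2}). You have simply spelled out in detail what the paper compresses into two sentences.
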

\begin{proof}
By Lemma \ref{lemma-conditions-theorem-reduction-wiener}the condition on the $L^2(\Omega)$ norm imposed in Theorem \ref{billingsley-3.2} is satisfied by the processes  $\left( \sqrt{\epsilon} \int_0^{\f T \epsilon} G_{M}^1(y_t)dt, \dots,  \sqrt{\epsilon} \int_0^{\f T \epsilon} G_{M}^N(y_t) dt \right)
$ and
$\left( \sqrt{\epsilon} \int_0^{\f T \epsilon} G^1(y_t)dt, \dots,  \sqrt{\epsilon} \int_0^{\f T \epsilon} G^N(y_t) dt \right)$. Thus, an application of Theorem \ref{billingsley-3.2} and the Cramer-Wold Theorem \ref{Cramer-Wold-theorem} concludes the proof.
\end{proof}

\begin{proposition}\label{reduziertes-SRD-Resultat}
Given a collection of functions $G^{j}$, $j=1, \dots, N$,  where $G^j = \sum_{k=0}^{\infty} c_{j,k} X^k$ such that $\vert c_{j,k} \vert \lesssim k!$,  $ G_M^j(y_s)$ as above and set $\bar{G}_{M,T}^{j,\epsilon} = \sqrt{\epsilon} \int_0^{\f T \epsilon} G_M^j(y_s) ds$. Then, for every $M \in \N$ and finite collection of times
$T_{i,j} \in [0,1]$ the vector $( \bar{G}^{j,\epsilon}_{M,T_{i,j}} ),$ where $0 \leq i \leq Q$ and $1 \leq j \leq N$, converges jointly to a multivariate normal distribution $(W^j_{M,T_{i,j}})$ with covariance structure
$$ \E \left[  W^{j}_{M,T_{i,j}} W^{j'}_{M,T_{i',j'}} \right] = \lim_{\epsilon \to 0} \E\left[ \bar{G}_{M,T_{i,j}}^{j,\epsilon} \bar{G}_{M,T_{i',j'}}^{j',\epsilon}   \right] = 2 (T_{i,j} \wedge T_{i',j'}) \sum_{k,k'=0}^{M} \sum_{d=0}^{M} \int_0^{\infty} \E \left[ I_d(h^{d,k}_s) I_d(h^{d,k'}_0)  \right] ds.
$$
\end{proposition}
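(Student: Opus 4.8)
The plan is to read the left-hand side as a \emph{finite} linear functional of the building blocks whose joint finite dimensional convergence was already obtained in Proposition~\ref{proposition-joint-wiener-building-blocks-srd}. First I would unwind the definition of $G^j_M$. Since $G^j$ is centred, $\sum_{k=0}^\infty c_{j,k}I_0(h^{0,k}_t)=\E[G^j(y_t)]=0$, so the zeroth chaos terms cancel and, by the chaos decomposition of Section~\ref{section-decomposition},
$$
G^j_M(y_t)=\sum_{k=0}^{M}c_{j,k}\sum_{d=1}^{M}I_d(h^{d,k}_t)=\sum_{k=0}^{M}c_{j,k}\sum_{\{r:\,1\le\delta(k,r)\le M\}}C_1(r,k,m)\,I_{\delta(k,r)}\big(g^{k,r}_t\big).
$$
Integrating and rescaling, $\bar G^{j,\epsilon}_{M,T}$ is a fixed, $\epsilon$-independent linear combination over a finite index set of the rescaled building blocks $\bar g^{k,r,\epsilon}_T=\sqrt\epsilon\int_0^{T/\epsilon}g^{k,r}_t\,dt$, all of which are short range dependent in the regime of this subsection.

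\textbf{Passing to a limit.} Next I would collect the (finite) family of all pairs $(k,r)$ occurring across $\bar G^{1,\epsilon}_{M},\dots,\bar G^{N,\epsilon}_{M}$ and apply Proposition~\ref{proposition-joint-wiener-building-blocks-srd}: the corresponding vector of building block processes converges in finite dimensional distributions to a jointly Gaussian (Wiener) limit. Evaluating at the finitely many prescribed times $T_{i,j}$ and pushing forward by the fixed linear map $\big(I_{\delta(k,r)}(\bar g^{k,r,\epsilon}_{T_{i,j}})\big)\mapsto\big(\bar G^{j,\epsilon}_{M,T_{i,j}}\big)$, which preserves convergence in distribution and sends Gaussian vectors to Gaussian vectors, I obtain joint convergence of $\big(\bar G^{j,\epsilon}_{M,T_{i,j}}\big)$ to a centred Gaussian vector; equivalently one may run the Cramer-Wold theorem through one dimensional projections exactly as in the proof of Proposition~\ref{proposition-joint-wiener-building-blocks-srd}.

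\textbf{Identifying the covariance.} Finally I would compute the covariance as $\lim_{\epsilon\to0}\E\big[\bar G^{j,\epsilon}_{M,T_{i,j}}\bar G^{j',\epsilon}_{M,T_{i',j'}}\big]$: expanding the two finite linear combinations, orthogonality of distinct Wiener chaoses (valid for each $\epsilon$) removes all pairs with $d\neq d'$ and centredness removes the zeroth chaos terms, leaving only pairs $I_d(h^{d,k}_t)I_d(h^{d,k'}_{t'})$ with $1\le d\le M$. On each such pair Lemma~\ref{covariance-convergence} supplies the limit $2(T_{i,j}\wedge T_{i',j'})\int_0^\infty\E[I_d(g^{k,r}_u)I_d(g^{k',r'}_0)]\,du$; resumming the finite $(r,r')$ sums back into $h^{d,k},h^{d,k'}$ and then over $k,k'$ gives
$$
\E\big[W^j_{M,T_{i,j}}W^{j'}_{M,T_{i',j'}}\big]=2\,(T_{i,j}\wedge T_{i',j'})\sum_{k,k'=0}^{M}\sum_{d=1}^{M}c_{j,k}c_{j',k'}\int_0^\infty\E\big[I_d(h^{d,k}_s)I_d(h^{d,k'}_0)\big]\,ds ,
$$
the asserted formula, the $d=0$ contribution being absent precisely because of centring.

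\textbf{Main obstacle.} The substantive analytic input — the Fourth Moment Theorem, the vanishing of contractions in Lemma~\ref{contractions-go-to-0}, and the covariance limit of Lemma~\ref{covariance-convergence} — has already been carried out, so this statement is essentially their bookkeeping assembly. The two points that need care are: that every building block entering $G^j_M$ is genuinely short range dependent, so that Proposition~\ref{proposition-joint-wiener-building-blocks-srd} and Lemma~\ref{covariance-convergence} apply (if $G^j$ is only assumed to have chaos rank $w_j$ with $H^*(w_j)<\f12$ one should first pass to the centred object $\mathfrak G^j=G^j-P_{w_j-1}(G^j)$, whose truncations involve only chaoses $d\ge w_j$, all short range dependent); and that the limit $\epsilon\to0$ may be taken inside the sums when computing the covariance, which is immediate here since all sums over $(k,k',d,r,r')$ are finite, so Lemma~\ref{covariance-convergence} can be invoked term by term. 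I expect no new estimate to be required beyond these checks.
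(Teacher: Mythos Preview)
Your approach is essentially identical to the paper's: both write $\bar G^{j,\epsilon}_{M,T}$ as a fixed finite linear combination of the building blocks $I_{\delta(k,r)}(\bar g^{k,r,\epsilon}_T)$, invoke Proposition~\ref{proposition-joint-wiener-building-blocks-srd} for their joint Gaussian convergence, and transport the limit through that linear map (the paper says simply ``summation is a continuous operation''); your explicit covariance computation via Lemma~\ref{covariance-convergence} is just a spelled-out version of what the paper asserts. Your flag about possible long-range dependent building blocks in chaoses $d<w_j$ arising from the polynomial truncation is well taken and is a point the paper passes over silently; note, however, that your proposed remedy $\mathfrak G^j=G^j-P_{w_j-1}(G^j)$ coincides with $G^j$ whenever the chaos rank is already $w_j$, so $(\mathfrak G^j)_M=G^j_M$ and nothing is gained --- the clean resolution is rather to modify the truncation $G^j_M$ itself to project only onto chaoses $d\ge w_j$.
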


\begin{proof}
As we now deal with finitely many terms $h^{d,k}$, there are infinitely many terms in the $0^{th}$ chaos, however by assumption they sum up to $0$,  we can collect all buildings blocks $g^{k_q,r^q}$ such that $k_q \leq M$ and  $0<\delta(k_q,r^q) \leq M$.
By Proposition \ref{proposition-joint-wiener-building-blocks-srd} the vector 
$( \bar{g}^{k_q,r^q}_{T_{i,j}})$ converges jointly to a multivariate normal distribution $(W^{q}_{T_{i,j}})$ with covariance 
$$\E \left[ W^{q}_{T_{i,j}} W^{q'}_{T_{i',j'}} \right] = 2 \left( T_{i,j} \wedge T_{i'.j'} \right) \int_0^{\infty} \E \left[ g^{k_q,r^q}_s g^{k_{q'},r^{q'}}_0 \right] ds.$$
As summation is a continuous operation, and 
$$
\bar{G}^j_{M,T_{i,j}} = \sum_{k=0}^M c_k \sum_{d=0}^M \sum_{r: \delta(k,r) = d} I_d(\bar{g}^{k,r}_{T_{i,j}}) - \E \left[ \sum_{k=0}^M c_k \sum_{d=0}^M \sum_{r: \delta(k,r) = d} I_d(\bar{g}^{k,r}_{T_{i,j}}) \right] 
$$
$(G^j_{M,T_{i,j}})_{\{ 0 \leq i \leq Q, 1 \leq j \leq N \}}$ also converges to a multivariate Gau{\ss}ian with the proclaimed covariances.
\end{proof}

\begin{proposition}
Fix $H \in ( \f 1 2, 1)$, $m \in \N$, a kernel $x$ satisfying assumptions \ref{assumption-decay-correlation-kernel} and set $y_t = \int_{-\infty}^t x(t-s) dZ^{H,m}_s$. Let, for each $j =1, \dots, N$,  a function of the form $G^j(X) = \sum_{k=0}^{\infty} c_{j,k} X^k$ such that $G^j$ has chaos rank $w^j$ with respect to $y_t$ and $\vert c_{j,k} \vert \lesssim \f {1} {k!}$ be given. Assume further that for each $j$,  $H^*(w^j) < \f 1 2 $ . For $T \in [0,1]$ set,
$$
\bar{G}^{j,\epsilon}_T = \sqrt{\epsilon} \int_0^{\f T \epsilon} G^j(y_t) dt.
$$
Then, the vector
$$
( \bar{G}^{1,\epsilon}_T , \dots, \bar{G}^{N,\epsilon}_T), 
$$
converges as $\epsilon \to 0$  weakly in $\C^{\gamma}([0,1],\R^N)$, for $\gamma \in (0, \f 1 2)$ to a multivariate Wiener process 
$$
W_T=\left(W^1_T, \dots , W^N_T\right)
$$ with covariance structure, for $T,S \in [0,1]$,
$$
\E\left[ W^j_T W^l_S \right] = 2 ( T \wedge S) \int_0^{\infty} \E \left[ G^{j}(y_s) G^{l}(y_0) \right] ds. 
$$
\end{proposition}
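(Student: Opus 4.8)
The plan is to obtain the statement by combining the already available tightness in the H\"older topology with the identification of the limiting finite-dimensional distributions, the latter via a cut-off (reduction) argument. Since every component is short range dependent, $H^*(w^j) < \f 1 2$ for all $j$, so $\epsilon^{H^*(w^j)\vee \f 1 2} = \sqrt\epsilon$ and the relevant H\"older exponents are $\gamma \in (0,\f 1 2)$.

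\emph{Tightness.} First I would apply Proposition \ref{proposition-hoelder}: under exactly the present hypotheses it gives that $(\bar G^{1,\epsilon}_\cdot,\dots,\bar G^{N,\epsilon}_\cdot)$ is tight in $\C^\gamma([0,1],\R^N)$ for every $\gamma \in (0,\f 1 2)$, the threshold $\f 1 2$ occurring precisely because all components are short range dependent. No additional argument is needed for this part.

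\emph{Convergence of finite-dimensional distributions.} Let $\bar G^{j,\epsilon}_{M,T} = \sqrt\epsilon \int_0^{T/\epsilon} G^j_M(y_t)\,dt$ be the truncated functionals. For fixed $M$, Proposition \ref{reduziertes-SRD-Resultat} gives, for any finite collection of times, convergence in law of $(\bar G^{j,\epsilon}_{M,T_{i,j}})$ to a centred Gaussian vector with covariance $\Lambda^{j,l}_M(T,S) = 2(T\wedge S)\sum_{k,k'=0}^M\sum_{d=0}^M \int_0^\infty \E[I_d(h^{d,k}_s) I_d(h^{d,k'}_0)]\,ds$. I would then send $M\to\infty$. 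By Lemma \ref{wachstum-konstanten} the integrand is bounded by $C_4(k,k',m,d)(1\wedge s^{(2H_0-2)d})$; the $0$th-chaos contributions cancel by centring, and for the surviving chaoses $d\ge \max(w^j,w^l)$ one has $H^*(d)\le H^*(\max(w^j,w^l)) < \f 1 2$, i.e.\ $(2H_0-2)d < -1$, so each term is integrable on $[0,\infty)$, while $\sum_{k,k'}\sum_d \vert c_{j,k}c_{l,k'}\vert C_4(k,k',m,d) < \infty$ by the coefficient decay, exactly as in Lemma \ref{lemma-conditions-theorem-reduction-wiener}. Dominated convergence over the $(k,k',d)$ summation and the $s$-integral then yields $\Lambda^{j,l}_M(T,S)\to \Lambda^{j,l}(T,S) := 2(T\wedge S)\int_0^\infty \E[G^j(y_s)G^l(y_0)]\,ds$. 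Lemma \ref{lemma-conditions-theorem-reduction-wiener} supplies the last hypothesis of the reduction Lemma \ref{Reduktion-SRD}, so that lemma (together with the Cram\'er--Wold device, and Proposition \ref{proposition-hoelder} for tightness) delivers weak convergence of $(\bar G^{1,\epsilon}_T,\dots,\bar G^{N,\epsilon}_T)$ in $\C^\gamma([0,1],\R^N)$ to a Wiener process with covariance $\Lambda^{j,l}$, which is the claimed expression.

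I expect the only genuinely delicate point to be the exchange of the limits $M\to\infty$ and $\epsilon\to 0$ — both in showing $\Lambda_M\to\Lambda$ and in verifying the reduction hypothesis — but both reduce to the uniform correlation decay of Lemma \ref{wachstum-konstanten} combined with the summability forced by $\vert c_{j,k}\vert \lesssim 1/k!$, so essentially no work beyond assembling the earlier results is required.
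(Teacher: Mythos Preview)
Your proposal is correct and follows essentially the same route as the paper: invoke Proposition \ref{reduziertes-SRD-Resultat} for convergence of the truncations, verify $\Lambda_M^{j,l}\to\Lambda^{j,l}$ via the decay from Lemma \ref{wachstum-konstanten} and the coefficient bounds, apply the reduction Lemma \ref{Reduktion-SRD} (whose hypothesis is Lemma \ref{lemma-conditions-theorem-reduction-wiener}) for finite-dimensional convergence, and combine with the tightness from Proposition \ref{proposition-hoelder}. Your write-up is in fact slightly more explicit than the paper's in justifying the passage $\Lambda_M\to\Lambda$, but no step differs in substance.
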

\begin{proof}
Combining Lemma \ref{Reduktion-SRD}, Proposition \ref{reduziertes-SRD-Resultat} and the computation 
\begin{align*}
\lim_{M \to \infty} \E\left[ W^j_{M,T} W^l_{M,S} \right] &= \lim_{M\to \infty} 2 (T \wedge S) \sum_{k,k'=0}^{M} \sum_{d=0}^{M} \int_0^{\infty} \E \left[ I_d(h^{d,k}_s) I_d(h^{d,k'}_0)  \right] ds\\
&= 2 ( T \wedge S) \int_0^{\infty} \E \left[ G^{j}(y_s) G^{l}(y_0) \right] ds\\
&=  \E\left[ W^j_T W^l_S \right]
\end{align*}
gives us convergence in finite dimensional distributions.
Furthermore, by Proposition \ref{proposition-hoelder} we have for each $j$ and $p>2$,
$$ \Vert \bar{G}^{j,\epsilon}_T - \bar{G}^{j,\epsilon}_S \Vert_{L^p(\Omega)} \lesssim \sqrt{\vert T-S \vert}.
$$
Thus, by an application of Kolmogorv's Theorem each $\bar{G}^{j,\epsilon}$ is tight in $\C^{\gamma}([0,1],\R^N)$ for $\gamma \in (0, \f 1 2)$. As tightness in $\C^{\gamma}([0,1],\R^N)$ is equivalent to tightness in each component this concludes the proof.
\end{proof}

\subsection{Long range dependent case}
In the long range dependent case the reduction is in fact easier as we only need to consider the lowest order chaos.  We denote by $G_{w,M}(y_s)$ the projections of the first $M$ polynomials of $G(y_s)$ onto the lowest order chaos, thus $G_{w,M}(y_s)=\sum_{k=0}^M c_k  I_w(h^{w,k})  $, in case $G$ has chaos rank $w$.
\begin{lemma}\label{lemma-reduction-Hermite-case-LRD}
	Given $G(X) = \sum_{k =0}^{\infty} c_k X^k$ such that $\vert c_k \vert \lesssim \f 1 {k!}$, $G$ has chaos rank $w \geq 1$  with respect to $y_s$, where $ H^*(w) > \f 1 2$, and $T \in [0,1]$, then, 
	$$ \lim_{M \to \infty} \limsup_{\epsilon \to 0} \Vert \epsilon^{H^*(w)}\left( \int_0^{\f T \epsilon} G(y_t) -G_{w,M}(y_t) dt \right) \Vert_{L^2(\Omega)} = 0. $$
\end{lemma}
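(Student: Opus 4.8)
The plan is to reduce, as always, to the covariance via the $L^2(\Omega)$ isometry: the square of the quantity in question equals
$$\epsilon^{2H^*(w)}\int_0^{\f T\epsilon}\int_0^{\f T\epsilon}\E\big[(G(y_t)-G_{w,M}(y_t))(G(y_{t'})-G_{w,M}(y_{t'}))\big]\,dt\,dt'.$$
Using the chaos decomposition $(y_t)^k=\sum_d I_d(h^{d,k}_t)$, the hypothesis that $G$ has chaos rank $w$ (which forces $\sum_k c_k h^{d,k}_t=0$ for every $d<w$, and in particular there are no surviving chaoses below $w$), and the definition $G_{w,M}(y_t)=\sum_{k=0}^M c_k I_w(h^{w,k}_t)$, the difference $G(y_t)-G_{w,M}(y_t)$ is the sum of two orthogonal pieces: the tail in chaos $w$, namely $\sum_{k>M}c_k I_w(h^{w,k}_t)$, and the high-chaos part $\sum_{k\ge0}c_k\sum_{d>w}I_d(h^{d,k}_t)$. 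By orthogonality of the Wiener chaoses the double integral splits accordingly, and I would bound the two contributions separately, in each case inserting the estimate of Lemma \ref{wachstum-konstanten}, $\big|\E[I_d(h^{d,k}_t)I_d(h^{d,k'}_{t'})]\big|\lesssim C_4(k,k',m,d)\,(1\wedge|t-t'|^{(2H_0-2)d})$, and then running the scaling computation of Lemma \ref{lemma-variance-asymptotics}.

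For the high-chaos part, the contribution of chaos $d>w$ is, after the change of variables $u=t-t'$, at most $\sum_{k,k'}|c_kc_{k'}|\,C_4(k,k',m,d)$ times the factor $\epsilon^{2H^*(w)}\,\f T\epsilon\int_0^{T/\epsilon}1\wedge|u|^{(2H_0-2)d}\,du$. Since $H_0<1$ and $d>w$ we have $H^*(d)<H^*(w)$; a case split according to whether $H^*(d)$ lies below, equal to, or above $\f12$ (exactly as in Lemma \ref{lemma-variance-asymptotics}) shows that this $\epsilon$-factor is bounded on $(0,1]$ by a constant depending only on $T$, $w$ and $H_0$ — uniformly in $d$, because only finitely many $d$ fall in the regime $H^*(d)>\f12$ while the regime $H^*(d)\le\f12$ is absorbed by $H^*(w)>\f12$ — and moreover tends to $0$ as $\epsilon\to0$ for each fixed $d$. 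Because $\sum_{d>w}\sum_{k,k'}|c_kc_{k'}|\,C_4(k,k',m,d)<\infty$ by the decay $|c_k|\lesssim\f1{k!}$ (this is the very summability established inside the proofs of Lemmas \ref{lemma-kolmogorv-bound} and \ref{lemma-conditions-theorem-reduction-wiener}), dominated convergence applied to the sum over $(d,k,k')$ shows that the whole high-chaos contribution vanishes as $\epsilon\to0$, and it does not depend on $M$.

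For the chaos-$w$ tail, the same manipulation bounds the contribution by $\sum_{k,k'>M}|c_kc_{k'}|\,C_4(k,k',m,w)$ times $\epsilon^{2H^*(w)}\,\f T\epsilon\int_0^{T/\epsilon}1\wedge|u|^{(2H_0-2)w}\,du$; now $H^*(w)>\f12$ is equivalent to $(2H_0-2)w>-1$, so the $u$-integral diverges like $(T/\epsilon)^{2H^*(w)-1}$ and the $\epsilon$-prefactor converges to the finite constant $T^{2H^*(w)}$, hence stays bounded uniformly in $\epsilon\in(0,1]$. Thus this contribution is $\lesssim T^{2H^*(w)}\sum_{k,k'>M}|c_kc_{k'}|\,C_4(k,k',m,w)$, which is independent of $\epsilon$ and tends to $0$ as $M\to\infty$ by the same summability. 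Adding the two bounds and taking first $\limsup_{\epsilon\to0}$ and then $M\to\infty$ yields the claim.

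The only genuinely delicate point is the bookkeeping of the three nested sums (over the polynomial degrees $k,k'$ and the chaos index $d$) after the $\epsilon^{2H^*(w)}$-rescaling: one must check that the bound of Lemma \ref{wachstum-konstanten} together with $|c_k|\lesssim\f1{k!}$ remains summable, and that the rescaled $\epsilon$-prefactors are bounded uniformly in $d$, so that dominated convergence legitimately sends the high-chaos part to zero. This is precisely the estimate already underlying Lemmas \ref{lemma-kolmogorv-bound} and \ref{lemma-conditions-theorem-reduction-wiener}, transplanted to the long-range regime; the remaining integral estimates are the routine three-regime computation carried out in Lemma \ref{lemma-variance-asymptotics}.
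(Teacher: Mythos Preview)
Your proof is correct and follows essentially the same route as the paper: expand the $L^2$ norm as a double time integral, split $G-G_{w,M}$ into the high-chaos part $\sum_{d>w}$ and the chaos-$w$ tail $\sum_{k>M}$, apply Lemma~\ref{wachstum-konstanten}, and kill the first piece with $\epsilon\to0$ and the second with $M\to\infty$ via the summability of $C_4$ against $|c_kc_{k'}|$. Your treatment of the high-chaos part is in fact more carefully argued than the paper's --- you make explicit the three-regime case split on $H^*(d)$ and the dominated-convergence justification for interchanging the limit in $\epsilon$ with the sum over $d$, where the paper simply records the outcome as ``$\epsilon^{H^*(w)}o(\epsilon^{H^*(w)})$'' without spelling this out.
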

\begin{proof}
	Using the estimates obtained in Lemma \ref{wachstum-konstanten}, we compute, similar to Lemma \ref{Reduktion-SRD}, 
	\begin{align*}
	&\E \left[ \left( \epsilon^{H^*(w)} \int_0^{\f T \epsilon} G(y_t) -G_{w,M}(y_t) dt \right)^2 \right]\\
	&=    \epsilon^{H^*(w)}  \int_{[0, \f T \epsilon]^2} \E \left[ \left( G(y_t) -G_{w,M}(y_t) \right) \left(G(y_{t'}) -G_{w,M}(y_{t'}) \right) \right] dt dt'   \\
	&=  \epsilon^{H^*(w)} \int_{[0, \f T \epsilon]^2} \E \Big [ \left( \sum_{k=0}^{\infty} \sum_{d=w+1}^{\infty} c_k I_d\left(h^{d,k}_t \right) + \sum_{k=M+1}^{\infty} c_k I_w\left(h^{w,k}_t \right) \right) \\ &\left( \sum_{k'=0}^{\infty} \sum_{d'=w+1}^{\infty} c_{k'} I_{d'}\left(h^{d',k'}_{t'} \right) + \sum_{k'=M+1}^{\infty}  c_{k'} I_{w}\left(h^{w,k'}_{t'} \right) \right)  \Big ] dt dt' \\
	&=  \sum_{d=w+1}^{\infty}  \sum_{k,k'=0}^{\infty}  c_k c_{k'}  \epsilon^{H^*(w)} \int_{[0, \f T \epsilon]^2} \E \left[I_d(h^{d,k}_t)   I_d(h^{d,k'}_{t'}) \right] dt dt' +   \sum_{k,k'=M+1}^{\infty}  c_k c_{k'}  \epsilon^{H^*(w)} \int_{[0, \f T \epsilon]^2} \E \left[I_d(h^{w,k}_t)   I_d(h^{w,k'}_{t'}) \right] dt dt' \\
	&\lesssim  \epsilon^{H^*(w)}o(\epsilon^{H^*(w)}) \sum_{d=w+1}^{\infty}  \sum_{k,k'=0}^{\infty}  c_k c_{k'} C_4(k,k',m,d)  +  \sum_{k,k'=M+1}^{\infty}  c_k c_{k'} C_4(k,k',m,w)  .
	\end{align*}
	Furthermore, $C_4(k,k',m,d) = \f {(m!)^{(k+k')} (k+k')^m ((k+k')m)^2 m^{k+k'} \mathfrak{L}^{(k+k')m}} {d^2}$ satisfies,
	\begin{align*}
	&\sum_{d=w+1}^{\infty} \sum_{k,k'=0}^{\infty}   c_k c_{k'} C_4(k,k',m,d) +  \sum_{k,k'=M+1}^{\infty} c_k c_{k'} C_4(k,k',m,w) \\
	& \lesssim \sum_{d=w+1}^{\infty} \f 1 {d^2} + \sum_{k,k'=M+1}^{\infty} \f {(m!)^{(k+k')} (k+k')^m ((k+k')m)^2 m^{k+k'} \mathfrak{L}^{(k+k')m}} {k! k'!},
	\end{align*}
	hence, $ \epsilon^{H^*(w)}o(\epsilon^{H^*(w)}) \sum_{d=w+1}^{\infty}  \sum_{k,k'=0}^{\infty}  c_k c_{k'} C_4(k,k',m,d) \to 0$ as $\epsilon \to 0$ and $ \sum_{k,k'=M+1}^{\infty}  c_k c_{k'} C_4(k,k',m,w) \to 0$ as $M \to \infty$, proving the claim.
\end{proof}
\begin{proposition}\label{reduziertes-LRD-Resultat}
	Given a collection of functions $G^{j}$, $j \in \{ 1, \dots, N \} $,  where $G^j = \sum_{k=0}^{\infty} c_{j,k} X^k$ such that $\vert c_{j,k} \vert \lesssim k!$ with chaos rank $w_j$,  $ G_{w_j,M}^j(y_s)$ as above and set $\bar{G}_{w_j,M,T}^{j,\epsilon} = \epsilon^{H^*(w_j)} \int_0^{\f T \epsilon} G_{w,M}^j(y_s) ds$. Then, for every $M \in \N$ and finite collection of times
	$T_{i,j} \in [0,1]$ the vector $( \bar{G}^{j,\epsilon}_{w_j,M,T_{i,j}} ),$ where $1 \leq i \leq Q$ and $1 \leq j \leq N$, converges jointly to the marginals of a multivariate Hermite process $( \kappa_{j,M} Z^{H^*(w_j),w_j}_{M,T_{i,j}})$, where $\kappa_{j,M} = \lim_{\epsilon \to 0} \Vert   \bar{G}^{j,\epsilon}_{w_j,M,1} \Vert_{L^2(\Omega)}$.
	In particular, 
	$$ \left( \bar{G}_{w_1,M,T}^{1,\epsilon},\dots , \bar{G}_{w_N,M,T}^{N,\epsilon} \right),$$
	converges in the sense of finite dimensional distributions to a multivariate Hermite process 
	$$\left( \kappa_{1,M} Z^{H^*(w_1),w_1}_{M,T} ,\dots , \kappa_{N,M} Z^{H^*(w_N),w_N}_{M,T} \right),$$
	where each component is defined via the same Wiener process.
\end{proposition}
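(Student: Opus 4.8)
The plan is to reduce everything to the building-block convergence of Proposition~\ref{proposition-joint-wiener-building-blocks-lrd}, exploiting that the reduced functional $G^j_{w_j,M}(y_s)$ is, by construction, a \emph{finite} linear combination of building blocks all living in the single chaos of order $w_j$. Indeed, by the decomposition of Section~\ref{section-decomposition},
\[
G^j_{w_j,M}(y_s) = \sum_{k=0}^{M} c_{j,k}\, I_{w_j}(h^{w_j,k}_s) = \sum_{k=0}^{M} c_{j,k} \sum_{\{r:\,\delta(k,r)=w_j\}} C_1(r,k,m)\, I_{w_j}(g^{k,r}_s),
\]
and since every building block appearing here has the same chaos order $w_j$, hence the same scaling exponent $H^*(w_j)$,
\[
\bar{G}^{j,\epsilon}_{w_j,M,T} = \sum_{k=0}^{M} c_{j,k} \sum_{\{r:\,\delta(k,r)=w_j\}} C_1(r,k,m)\, I_{w_j}\!\big(\bar{g}^{k,r,\epsilon}_T\big).
\]
Thus, for any finite family of times $\{T_{i,j}\}$, the vector $\big(\bar{G}^{j,\epsilon}_{w_j,M,T_{i,j}}\big)_{i,j}$ is a fixed (deterministic, $\epsilon$-independent) linear image of the finite family of building-block processes $\big(I_{\delta(k,r)}(\bar{g}^{k,r,\epsilon}_{T_{i,j}})\big)$, the index $(k,r,j,i)$ ranging over all $j$, all times, and all admissible $(k,r)$ with $k\le M$, $\delta(k,r)=w_j$.

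First I would apply Proposition~\ref{proposition-joint-wiener-building-blocks-lrd} to this entire finite family at once: it converges, in finite dimensional distributions, to the family $\big(\kappa^{k,r}\,Z^{H^*(w_j),w_j}_{T_{i,j}}\big)$ of Hermite processes, all realised as multiple Wiener integrals with respect to \emph{one and the same} complex random spectral measure (this is precisely the joint statement of that proposition, built on the common spectral kernel produced by Lemma~\ref{lemma-Fourier-transform}). Since convergence in distribution is preserved by the fixed linear map above, the vector $\big(\bar{G}^{j,\epsilon}_{w_j,M,T_{i,j}}\big)_{i,j}$ converges in finite dimensional distributions. The subtle point to check is that the limit of $\bar{G}^{j,\epsilon}_{w_j,M,T}$ is again a Hermite process (a scalar multiple of $Z^{H^*(w_j),w_j}$): this holds because the proof of Proposition~\ref{proposition-joint-wiener-building-blocks-lrd} shows that the rescaled kernel of each building block of chaos order $w_j$ converges in $L^2$ to a \emph{deterministic constant times one fixed function} of the spectral variables, namely the spectral kernel of $Z^{H^*(w_j),w_j}_T$; hence all building blocks of chaos order $w_j$ converge to scalar multiples of a single random process, and so does their linear combination. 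Consequently the limit is $\kappa_{j,M}\,Z^{H^*(w_j),w_j}_{M,T}$ for a deterministic $\kappa_{j,M}$, and since all components are driven by the same spectral measure, so are their limits.

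It remains to identify $\kappa_{j,M}=\lim_{\epsilon\to 0}\Vert\bar{G}^{j,\epsilon}_{w_j,M,1}\Vert_{L^2(\Omega)}$. For this I would use that the proof of Proposition~\ref{proposition-joint-wiener-building-blocks-lrd} establishes not merely convergence in distribution but $L^2(\Omega)$ convergence of each rescaled building block; a finite sum of $L^2(\Omega)$-convergent sequences is $L^2(\Omega)$-convergent, so $\bar{G}^{j,\epsilon}_{w_j,M,1}\to\kappa_{j,M}Z^{H^*(w_j),w_j}_{M,1}$ in $L^2(\Omega)$, and taking norms, together with the normalisation $\Vert Z^{H^*(w_j),w_j}_{M,1}\Vert_{L^2(\Omega)}=1$, gives the asserted identity for $\kappa_{j,M}$ (the sign being absorbed into the spectral measure). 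The main obstacle here is not analytic but organisational: one must phrase the reduction so that Proposition~\ref{proposition-joint-wiener-building-blocks-lrd} is invoked \emph{jointly} over all functions $j$, all admissible $(k,r)$ and all times simultaneously, so that the Hermite limits genuinely share one Wiener process, and one must be careful that, because all building blocks contributing to $G^j_{w_j,M}$ sit in the single chaos $w_j$, the common scaling $\epsilon^{H^*(w_j)}$ is the correct one; once these points are in place, the continuous mapping theorem and the $L^2$ convergence already recorded in the proof of Proposition~\ref{proposition-joint-wiener-building-blocks-lrd} finish the argument.
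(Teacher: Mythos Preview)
Your proposal is correct and follows essentially the same approach as the paper: collect the finitely many building blocks $g^{k,r}$ with $k\le M$ and $\delta(k,r)=w_j$, apply Proposition~\ref{proposition-joint-wiener-building-blocks-lrd} jointly to this family, and then use that summation (a fixed linear map) preserves convergence in distribution. Your write-up is in fact more careful than the paper's, which is quite terse; in particular your observation that the $L^2(\Omega)$ convergence established inside the proof of Proposition~\ref{proposition-joint-wiener-building-blocks-lrd} identifies $\kappa_{j,M}$, and your explicit remark that all chaos-$w_j$ building blocks converge to scalar multiples of the \emph{same} spectral kernel (so that their sum is again a multiple of $Z^{H^*(w_j),w_j}$), fill in points the paper leaves implicit.
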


\begin{proof}	
	As we now deal with finitely many terms $h^{d,k}$ we can view all buildings blocks $g^{k_q,r^q}$ such that $k_q \leq M$ and  $\delta(k_q,r^q) = w$.
	By Proposition \ref{proposition-joint-wiener-building-blocks-lrd} the vector 
	$( \bar{g}^{k_q,r^q}_{T_{i,j}})$ converges jointly to a multivariate Hermite distribution $(Z^{q,i,j}_{w,M,T_{i,j}}),$ where each component is defined via the same Wiener process.
	As summation is a continuous operation, and 
	$$
	\bar{G}^j_{w,M,T_{i,j}} = \sum_{k=0}^M  \sum_{r: \delta(k,r) = w} I_d(\bar{g}^{k,r}_{T_{i,j}}),
	$$
	also $(G^j_{M,T_{i,j}})$ converges to a multivariate Hermite distribution with the proclaimed covariances.
\end{proof}
\begin{proposition}
Fix $H \in ( \f 1 2, 1)$, $m \in \N$, a kernel $x$ satisfying assumptions \ref{assumption-decay-correlation-kernel} and set $y_t = \int_{-\infty}^t x(t-s) dZ^{H,m}_s$. Let, for each $j \in \{ 1, \dots, N \}$,  a function of the form $G^j(X) = \sum_{k=0}^{\infty} c_{j,k} X^k$ such that $G^j$ has chaos rank $w^j$ with respect to $y_t$ and $\vert c_{j,k} \vert \lesssim \f {1} {k!}$ be given. Assume further that for each $j$,  $H^*(w^j) > \f 1 2 $. For $T \in [0,1]$  set,
	$$
	\bar{G}^{j,\epsilon}_T = \sqrt{\epsilon} \int_0^{\f T \epsilon} G^j(y_t) dt.
	$$
	Then, the vector,
	$$
	( \bar{G}^{1,\epsilon}_T , \dots, \bar{G}^{N,\epsilon}_T), 
	$$
	converges as $\epsilon \to 0$ weakly in $\C^{\gamma}([0,1],\R^N)$, for $\gamma \in (0, \min_{j=1, \dots, N} H^*(w_j))$  to a multivariate Hermite process $$\left(\kappa_1 Z^{H^*(w_1),w_1}_T, \dots,  \kappa_N Z^{H^*(w_N),w_N}_T \right),$$
	where $ \kappa_j = \lim_{\epsilon \to 0} \Vert  \bar{G}^{j,\epsilon}_1 \Vert_{L^2(\Omega}$.
\end{proposition}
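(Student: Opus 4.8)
The plan is to transplant the short-range argument to the long-range regime, with Lemma~\ref{lemma-reduction-Hermite-case-LRD} and Proposition~\ref{reduziertes-LRD-Resultat} playing the roles of the short-range reduction lemma and building-block theorem (throughout, $\bar G^{j,\epsilon}_T=\epsilon^{H^*(w_j)}\int_0^{T/\epsilon}G^j(y_t)\,dt$, i.e.\ the long-range normalisation $\epsilon^{H^*(w_j)}=\epsilon^{H^*(w_j)\vee\f12}$). \textbf{Step 1 (truncation).} For each $j$ put $\bar G^{j,\epsilon}_{w_j,M,T}=\epsilon^{H^*(w_j)}\int_0^{T/\epsilon}G^j_{w_j,M}(y_t)\,dt$, the contribution of the lowest chaos $w_j$ of the first $M$ polynomials. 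Lemma~\ref{lemma-reduction-Hermite-case-LRD} gives $\lim_{M\to\infty}\limsup_{\epsilon\to0}\Vert\bar G^{j,\epsilon}_T-\bar G^{j,\epsilon}_{w_j,M,T}\Vert_{L^2(\Omega)}=0$ for every $j$ and $T\in[0,1]$; this is exactly the $L^2$-closeness hypothesis needed to combine the weak limits of the truncations with the weak limits of their limits via the approximation theorem (Theorem~\ref{billingsley-3.2}), while the Cramer--Wold device (Theorem~\ref{Cramer-Wold-theorem}) lets one pass between vector- and scalar-valued statements.

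\textbf{Step 2 (limits of the truncations, then $M\to\infty$).} By Proposition~\ref{reduziertes-LRD-Resultat}, for each fixed $M$ and every finite family of times, $(\bar G^{j,\epsilon}_{w_j,M,T})_j$ converges in finite-dimensional distributions to $(\kappa_{j,M}\,Z^{H^*(w_j),w_j}_T)_j$, where all $Z^{H^*(w_j),w_j}$ are realised from one common Wiener (spectral) process and $\kappa_{j,M}=\lim_{\epsilon\to0}\Vert\bar G^{j,\epsilon}_{w_j,M,1}\Vert_{L^2(\Omega)}$. I would then send $M\to\infty$: since $\Vert\bar G^{j,\epsilon}_1\Vert_{L^2}$ and $\Vert\bar G^{j,\epsilon}_{w_j,M,1}\Vert_{L^2}$ differ by a quantity whose $\limsup_\epsilon$ tends to $0$ as $M\to\infty$ (Step~1) while the second term converges in $\epsilon$, the limit $\kappa_j:=\lim_{\epsilon\to0}\Vert\bar G^{j,\epsilon}_1\Vert_{L^2(\Omega)}$ exists and $\kappa_{j,M}\to\kappa_j$. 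Because the processes $Z^{H^*(w_j),w_j}$ do not themselves depend on $M$, convergence of the scalars $\kappa_{j,M}$ forces $(\kappa_{j,M}Z^{H^*(w_j),w_j}_T)_j\to(\kappa_jZ^{H^*(w_j),w_j}_T)_j$ in law, keeping the common-Wiener-process structure. Feeding both limits into Theorem~\ref{billingsley-3.2} yields convergence of $(\bar G^{1,\epsilon}_T,\dots,\bar G^{N,\epsilon}_T)$ in finite-dimensional distributions to $(\kappa_1Z^{H^*(w_1),w_1}_T,\dots,\kappa_NZ^{H^*(w_N),w_N}_T)$.

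\textbf{Step 3 (tightness and conclusion).} Tightness of $(\bar G^{1,\epsilon}_T,\dots,\bar G^{N,\epsilon}_T)$ in $\C^\gamma([0,1],\R^N)$ for $\gamma\in(0,\min_j H^*(w_j))$ is already furnished by Proposition~\ref{proposition-hoelder}, which rests on the Kolmogorov-type $L^p$ estimate of Lemma~\ref{lemma-kolmogorv-bound} and on the summability provided by $|c_{j,k}|\lesssim1/k!$. Since tightness in the product H\"older space is equivalent to tightness of each coordinate, combining it with the finite-dimensional convergence from Step~2 upgrades the latter to weak convergence in $\C^\gamma([0,1],\R^N)$ towards the asserted multivariate Hermite process.

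I expect the main obstacle to be the passage $M\to\infty$ at the level of the limit objects in Step~2: one must carefully justify that $\kappa_j=\lim_\epsilon\Vert\bar G^{j,\epsilon}_1\Vert_{L^2}$ is well defined and equals $\lim_M\kappa_{j,M}$, and that the single-Wiener-process representation of the multivariate Hermite limit survives this limit, so that the covariance structure of the remark following Theorem~\ref{theorem-mixed} is reproduced. All other interchanges of limits are routine, being controlled by the uniform-in-$\epsilon$, summable-in-$(k,k',d)$ bound of Lemma~\ref{wachstum-konstanten} fed through the coefficient decay.
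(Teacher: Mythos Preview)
Your proposal is correct and follows essentially the same route as the paper: truncate via Lemma~\ref{lemma-reduction-Hermite-case-LRD}, identify the finite-dimensional limits of the truncations through Proposition~\ref{reduziertes-LRD-Resultat}, pass to $M\to\infty$ using $\kappa_{j,M}\to\kappa_j$ and the common-Wiener-process representation, combine via Theorem~\ref{billingsley-3.2}, and finish with the H\"older tightness of Proposition~\ref{proposition-hoelder}. If anything, you are slightly more explicit than the paper in justifying the existence of $\kappa_j$ and the survival of the joint structure under $M\to\infty$, which the paper simply asserts.
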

\begin{proof}
	Setting $\err_{w,M} = ( \bar{G}^{1,\epsilon}_T , \dots, \bar{G}^{N,\epsilon}_T) -  ( \bar{G}_{w,M,T}^{1,\epsilon} , \dots, \bar{G}_{w,M,T}^{N,\epsilon})$, we obtain
	\begin{align*}
	( \bar{G}^{1,\epsilon}_T , \dots, \bar{G}^{N,\epsilon}_T)= ( \bar{G}_{w,M,T}^{1,\epsilon} , \dots, \bar{G}_{w,M,T}^{N,\epsilon}) + \err_{w,M},
	\end{align*}
	where by Lemma \ref{lemma-reduction-Hermite-case-LRD} 
	$$ 
	\lim_{M \to \infty} \limsup_{\epsilon \to 0} \Vert \err_{w,M} \Vert_{L^2} = 0.
	$$
	Moreover, by Proposition \ref{reduziertes-LRD-Resultat} $( \bar{G}_{w,M,T}^{1,\epsilon} , \dots, \bar{G}_{w,M,T}^{N,\epsilon})$ converges in finite dimensional distributions to 
	$$
	(\kappa_{1,M} Z^{H^*(w_1),w_1}_{w_1,M,T}, \dots,  \kappa_{N,M} Z^{H^*(w_N),w_N}_{w_N,M,T}).
	$$
	As $\kappa_{j,M} \to \kappa_{j}$ as $M \to \infty$ and all our Hermite processes are defined via Wiener integrals over the same Wiener process we may apply Theorem \ref{billingsley-3.2} to conclude the first part of the proof.\\
	Concerning weak convergence in H\"older spaces, by Proposition \ref{proposition-hoelder} we have for each $j$ and $p>2$,
	$$ \Vert \bar{G}^{j,\epsilon}_T - \bar{G}^{j,\epsilon}_S \Vert_{L^p(\Omega)} \lesssim \vert T-S \vert^{H^*(w_j)}.
	$$
	Thus, by an application of Kolmogorv's Theorem each $\bar{G}^{j,\epsilon}_T$ is tight in $\C^{\gamma}([0,1],\R^N)$ for $\gamma \in (0, \min_{ \{ j=1, \dots , N \} } H^*(w_j)$. As tightness in $\C^{\gamma}([0,1],\R^N)$ is equivalent to tightness in each component this concludes the proof.
\end{proof}

\section{Mixed}
In this section we put together the Propositions \ref{proposition-joint-wiener-building-blocks-srd} and \ref{proposition-joint-wiener-building-blocks-lrd} into Theorem \ref{theorem-mixed}. To do so we rely on an asymptotic independence argument proven in \cite{Nourdin-Nualart-Peccati}.

\textbf{ Proof of Theorem \ref{theorem-mixed}}.

\begin{proof}
By the Lemmas  \ref{lemma-reduction-Hermite-case-LRD}, \ref{Reduktion-SRD},  Theorem \ref{billingsley-3.2} and arguments as in the proofs of the Propositions \ref{proposition-joint-wiener-building-blocks-srd} and \ref{proposition-joint-wiener-building-blocks-lrd}   it is sufficient to prove the claim for $\bar{G}^{j,\epsilon}_T$ replaced by $\bar{G}^{j,\epsilon}_{M,T}$ for arbitrary $M$ in case $j \leq n$ and $ \bar{G}^{j,\epsilon}_{w,M,T}$ in case $j > n$. Hence, we deal with finitely many terms given as iterated Wiener integrals. Furthermore for $j \leq n$ the functional  $\bar{G}^{j,\epsilon}_{M,T}$ is constructed by objects of the form $\bar{g}^{k,r}_T$ for which $\delta(k,r) = d$ such that $H^*(d)< \f 1 2$. In particular they converge to a Wiener process, thus, given two such terms, by the Fourth Moment Theorem \ref{fourth-moment-theorem}
$$
\Vert \bar{g}^{k_1,r^1,\epsilon}_{T} \otimes_r \bar{g}^{k_1,r^1,\epsilon}_{T} \Vert_{L^2(\R^{2d_1-2r},\lambda)} \to 0, \qquad  r =1 ,\dots , \min (k_1-1),
$$ 
and similarly for $\bar{g}^{k_2,r^2}_T$.

Applying Cauchy-Schwarz we obtain
for $r =1, \dots, \min (k_1-1,k_2-1)$, 
$$\left \Vert \hat  \bar{g}^{k_1,r^1,\epsilon}_{T} \otimes_r \bar{g}^{k_2,r^2,\epsilon}_{S}\right \Vert_{L^2(\R^{d_1+d_2-2r},\lambda)}
\leq \left \Vert \bar{g}^{k_1,r^1,\epsilon}_{T} \otimes_r \bar{g}^{k_1,r^1,\epsilon}_{T} \right \Vert_{L^2(\R^{d_1-r},\lambda)}  \left \Vert \bar{g}^{k_2,r^2,\epsilon}_{S} \otimes_r \bar{g}^{k_2,r^2,\epsilon}_{S} \right\Vert_{L^2(\R^{d_2-r},\lambda)}
\to 0,
$$
for all $S,T \in [0,1]$.
Now, an application of Proposition \ref{proposition-spit-independence} and the Cramer Wold Theorem \ref{Cramer-Wold-theorem} gives us the convergence in finite dimensional distributions of the vector
$$	( \bar{G}^{1,\epsilon}_T , \dots, \bar{G}^{N,\epsilon}_T).$$
Therefore, the moments bounds obtained in Lemma \ref{lemma-kolmogorv-bound} and an application of Kolmogorv's Theorem proof convergence in the proclaimed H\"older spaces.

To show that the Wiener process defining the Hermite processes is independent of limiting one in case $j \leq n$, note that in case the chaos rank of one component equals $1$ the limit is a fractional Brownian motion. It was shown in \cite{Hairer05} that the filtration between this fBM and the Wiener process defining it are identical, thus, as the fBM is independent of the limit for $j \leq n$ so is the defining Wiener process.
The proclaimed covariances were proved in the sections above, hence, this concludes the proof. 
\end{proof}

\section{Application to homogenization of slow/fast systems}

In this section we give an application of Theorem \ref{theorem-mixed} to a homogenization problem using Young/rough path integration theory.

In the following proof we require the following theorem from rough path theory for details and the corresponding version for Young differential equations we refer to \cite{Friz-Hairer,Friz-Victoir,Lyons94,Lyons-Caruana-Levy}. We denote the space of rough paths of  regularity $\gamma$ by $\FC^{\gamma}$.

\begin{theorem}\label{cty-rough}
	Let $Y_0 \in \R, \gamma \in (\f 1 3, \f 1 2), f \in \C^3_b([0,1],\R) $, and $\X \in \FC^{\gamma}([0,T],\R)$ be given. Then, the differential equation
	\begin{equation}\label{example-sde}
	Y_t = Y_0 + \int_0^t f(Y_s) d\mathbf{X}_s 
	\end{equation}
	has a unique solution which belongs to $\C^{\gamma}\left( [0,1],\R\right)$. Furthermore, the solution map  $\Phi_{f}: ~\R \times  \FC^{\gamma}([0,1], \R)
	\to  \C^{\gamma}([0,1],\R)$, where the first component is the initial condition and the second one the driver $\X$, is continuous.
\end{theorem}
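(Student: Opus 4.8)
The plan is to treat equation~\eqref{example-sde} by Gubinelli's theory of controlled rough paths, which is the most economical route to both well-posedness and continuity; for the scalar, level-two situation at hand (since $\gamma \in (\f 1 3,\f 1 2)$ the driver is a pair $\X = (X,\XX) \in \FC^{\gamma}$ carrying one iterated integral $\XX$) I would follow \cite{Friz-Hairer}. First I would fix $\X$ and recall the Banach space $\D^{2\gamma}_X$ of paths $(Y,Y')$ controlled by $X$, i.e. $Y_{s,t} = Y'_s X_{s,t} + R^Y_{s,t}$ with $\Vert Y'\Vert_{\C^\gamma} + \Vert R^Y\Vert_{2\gamma} < \infty$. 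The sewing lemma then produces, for every such $(Y,Y')$, the rough integral $\int_0^\cdot f(Y_s)\,d\X_s$, and the \emph{key algebraic point} is that $\bigl(\int_0^\cdot f(Y_s)\,d\X_s,\ f(Y)\bigr)$ again lies in $\D^{2\gamma}_X$, with the a priori bound
\[
\bigl\Vert \bigl( \int_0^\cdot f(Y_s)\,d\X_s,\ f(Y) \bigr) \bigr\Vert_{\D^{2\gamma}_X} \lesssim \Vert f\Vert_{\C^2_b}\,\bigl(1 + \Vert \X\Vert_\gamma\bigr)^2 \bigl(1 + \Vert (Y,Y')\Vert_{\D^{2\gamma}_X}\bigr)^2
\]
on a unit time interval. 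Hence $\mathcal M_{\X,Y_0}(Y,Y') := \bigl( Y_0 + \int_0^\cdot f(Y_s)\,d\X_s,\ f(Y)\bigr)$ is a self-map of $\D^{2\gamma}_X$.

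Next I would run the standard contraction argument. Using $f \in \C^3_b$ (so that $f,f'$ are bounded and Lipschitz with bounded derivatives), one shows that on a short interval $[0,\tau]$ the map $\mathcal M_{\X,Y_0}$ leaves a suitable ball of $\D^{2\gamma}_X([0,\tau])$ invariant and is a strict contraction there, with $\tau$ depending only on $\Vert\X\Vert_\gamma$ and not on $Y_0$, because the offending constants carry a positive power of $\tau$. This yields a unique controlled solution on $[0,\tau]$, and since any $\C^\gamma$ path solving~\eqref{example-sde} is automatically controlled by $X$ with Gubinelli derivative $f(Y)$, uniqueness holds in $\C^\gamma$. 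As $f$ is bounded the solution cannot explode, and because the interval length $\tau$ on which the contraction closes is uniform, concatenating finitely many such steps produces the unique global solution $Y \in \C^\gamma([0,1],\R)$.

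The remaining, genuinely delicate, part is the continuity of $\sol_f$, where the plan is to prove a local Lipschitz bound for the fixed point jointly in $(Y_0,\X)$. The subtlety is that controlled paths sitting above two different drivers $\X,\tilde\X$ live in different spaces, so one has to work with the usual "rough-path distance" $d_{\X,\tilde\X}\bigl((Y,Y'),(\tilde Y,\tilde Y')\bigr) = \Vert Y' - \tilde Y'\Vert_{\C^\gamma} + \Vert R^Y - R^{\tilde Y}\Vert_{2\gamma}$ together with a metric $\varrho_\gamma(\X,\tilde\X)$ on rough paths. One first upgrades the integral bound above to a stability estimate controlling $d_{\X,\tilde\X}$ between $\int f(Y)\,d\X$ and $\int f(\tilde Y)\,d\tilde\X$ in terms of $d_{\X,\tilde\X}\bigl((Y,Y'),(\tilde Y,\tilde Y')\bigr)$, $\varrho_\gamma(\X,\tilde\X)$, $|Y_0-\tilde Y_0|$ and $\Vert f\Vert_{\C^3_b}$, with all constants uniform over bounded sets of drivers (this is where the third derivative of $f$ enters). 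Feeding this into the contraction shows that the solution on $[0,\tau]$ depends locally Lipschitz-continuously on $(Y_0,\X)$; propagating the estimate through the finitely many concatenation intervals, and noting that control of $d_{\X,\tilde\X}$ dominates the $\C^\gamma$-distance of the path components, gives continuity of $\sol_f$. I expect this last bookkeeping — keeping $\tau$ and the contraction constant uniform on a neighbourhood of a given $\X$, and tracking how the Lipschitz constant compounds along the concatenation — to be the main obstacle. The Young regime $\gamma > \f 1 2$ is strictly simpler, the rough integral being replaced by the Young integral with no second level needed, and is covered by \cite{Friz-Victoir,Lyons-Caruana-Levy}.
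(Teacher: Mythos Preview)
Your outline is correct and follows the standard Gubinelli controlled-path argument from \cite{Friz-Hairer}. However, the paper does not give its own proof of this theorem: it is quoted as a black box from the rough path literature, with an explicit pointer to \cite{Friz-Hairer,Friz-Victoir,Lyons94,Lyons-Caruana-Levy} for the details (and the analogous Young statement). So there is no proof in the paper to compare against; your sketch is precisely the content of those references, and in that sense matches what the paper invokes.
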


For more details concerning rough path theory we refer to \cite{Friz-Hairer,Lyons-Caruana-Levy,Lyons94,Friz-Victoir}.

\textbf{Proof of Theorem \ref{thm-application}. }
\begin{proof}\label{proof-theorem-application}
Set $X^{\epsilon}_t = \alpha(\epsilon) \int_0^{\f t \epsilon} G(y_s) ds. $ Thus, we may rewrite Equation \ref{limit-eq} as 
$$
d x_t^\epsilon = f(x_t^\epsilon) dX^{\epsilon}_t, 
\qquad  x_0^\epsilon=x_0.
$$
Therefore, in case $1$ the claim follows from Proposition \ref{proposition-joint-wiener-building-blocks-lrd} and the continuity of solutions to Young differential equations, the equivalent of Theorem \ref{cty-rough} in the Young setting, as by assumption $H^*(w) > \f 1 2$.

In case $2$ we need to lift $X^{\epsilon}_t$ to a rough path. However, as we restrict ourselves to $1$ dimensions the rough path lift $\XX^{\epsilon}_{s,t}$ is just given by $\f 1 2 \left( X^{\epsilon}_{s,t} \right)^2$ by symmetry. Although the function $x^2$ is not bounded, due to a truncation argument and our integrability assumptions one can show that  $\f 1 2 \left( X^{\epsilon}_{s,t} \right)^2 \to \f 1 2 \left( X_{s,t} \right)^2 $ in finite dimensional distributions. Again by symmetry the moment bounds from Lemma \ref{lemma-kolmogorv-bound} carry over to $\XX^{\epsilon}_{s,t}$ and we obtain convergence of $\X^{\epsilon} = \left( X^{\epsilon}_t, \XX^{\epsilon}_{s,t} \right)$ to $\left(W_t, \WW_t \right)$, where $W$ denotes a standard Wiener process and $\WW$ its Stratonovich lift, in $\FC^{\gamma}\left( [0,1], \R\right)$ for $\gamma \in (\f 1 3, \f 1 2)$. Therefore, we may conclude the proof with an application of Theorem \ref{cty-rough}. 
\end{proof}

\section{Appendix}

\subsection{Asymptotic Independence}\label{section-ai}
For the proof of Theorem \ref{theorem-mixed}  we need the following Proposition which slightly modifies results from \cite{Nourdin-Rosinski} and \cite{Nourdin-Nualart-Peccati}, which can be found in  \cite{Gehringer-Li-fOU}.

\begin{proposition}\label{proposition-spit-independence}
	Let $q_1 \leq q_2, \dots \leq q_n \leq p_1 \leq p_2, \dots \le p_m$. Let  $f_i^\epsilon \in L^2(\R^{p_i})$, $
	g_i^\epsilon \in L^2(\R^{q_i})$, $F^{\epsilon}=\left(I_{p_1}(f^{\epsilon}_1), \dots , I_{p_m}(f^{\epsilon}_m)\right)$ and  $G^{\epsilon}=\left(I_{q_1}(g^{\epsilon}_1), \dots, I_{q_n}(g^{\epsilon}_n)\right)$. Suppose that 
	for  every $i$, and any $1 \leq r \leq q_i$:
	$$ \Vert f^{\epsilon}_j \otimes_r g^{\epsilon}_i \Vert \to 0.$$
	Then $F^\epsilon \to U$ and $G^{\epsilon} \to V$ weakly imply that $(F^{\epsilon},G^{\epsilon}) \to (U,V)$ jointly, where  $U$ and $V$ are taken to be independent random variables.
\end{proposition}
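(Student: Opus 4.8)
The plan is to reduce joint convergence of $(F^\epsilon, G^\epsilon)$ to the already-assumed marginal convergences $F^\epsilon \to U$ and $G^\epsilon \to V$, by showing that any subsequential joint limit must have independent components. Since the laws of $F^\epsilon$ and $G^\epsilon$ are each tight (convergence in law on $\R^m$, $\R^n$ respectively implies tightness), the pair $(F^\epsilon, G^\epsilon)$ is tight on $\R^{m+n}$; so it suffices to identify the limit of every convergent subsequence. Passing to such a subsequence, $(F^\epsilon, G^\epsilon) \to (\tilde U, \tilde V)$ jointly, with $\tilde U \eqlaw U$ and $\tilde V \eqlaw V$ by the marginal hypotheses. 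The task is then to prove $\tilde U$ and $\tilde V$ are independent, i.e. $\E[e^{i\langle s, \tilde U\rangle + i\langle t, \tilde V\rangle}] = \E[e^{i\langle s,\tilde U\rangle}]\,\E[e^{i\langle t,\tilde V\rangle}]$ for all $s \in \R^m$, $t \in \R^n$.

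The key step is the asymptotic-independence criterion for vectors of multiple Wiener--It\^o integrals from \cite{Nourdin-Rosinski, Nourdin-Nualart-Peccati}: if the contraction norms $\Vert f^\epsilon_j \otimes_r g^\epsilon_i \Vert$ tend to $0$ for all admissible $i,j$ and all $1 \le r \le q_i$ (note $r \le q_i \le p_j$ since $q_i$ is the smaller order, so $r$ ranges over exactly the contractions that can couple the two vectors), then the two chaos vectors become asymptotically independent in the sense that any joint subsequential limit factorizes. Concretely, one combines the two vectors into a single vector $H^\epsilon = (F^\epsilon, G^\epsilon)$ of multiple integrals and applies the multivariate fourth-moment/contraction machinery: the cross contractions govern the covariance between any polynomial functional of $F^\epsilon$ and any polynomial functional of $G^\epsilon$, and their vanishing forces the limiting characteristic function to split. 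This is essentially the content of the cited results; the minor modification recorded in \cite{Gehringer-Li-fOU} is that we do not assume the marginals converge to Gaussian limits — only that they converge — which is fine because the factorization argument only uses the contraction decay plus tightness, not the nature of $U$ and $V$.

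I would therefore structure the write-up as follows. First, invoke tightness of $(F^\epsilon,G^\epsilon)$ from the marginal convergences and reduce to analyzing an arbitrary joint subsequential limit $(\tilde U, \tilde V)$. Second, quote the asymptotic independence result of \cite{Nourdin-Rosinski, Nourdin-Nualart-Peccati} in the form given in \cite{Gehringer-Li-fOU}: the hypothesis $\Vert f^\epsilon_j \otimes_r g^\epsilon_i\Vert \to 0$ for all $i \le n$, $j \le m$, $1 \le r \le q_i$ implies that the limit of $(F^\epsilon,G^\epsilon)$, whenever it exists, is the law of a pair of independent vectors with the respective marginal limit laws. Third, conclude that $(\tilde U,\tilde V) \eqlaw (U,V)$ with $U \perp V$; since this identifies the limit of every convergent subsequence uniquely, the whole sequence converges to $(U,V)$ with $U$ and $V$ independent.

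The main obstacle is purely expository rather than mathematical: one must match the indexing convention here ($q_1 \le \dots \le q_n \le p_1 \le \dots \le p_m$, with the smaller-order vector $G^\epsilon$ playing the role requiring $r \le q_i$) to the statement as it appears in \cite{Gehringer-Li-fOU}, and verify that the hypothesis "$\Vert f^\epsilon_j \otimes_r g^\epsilon_i\Vert \to 0$ for all $i$ and all $1 \le r \le q_i$" is exactly the contraction-decay condition that the cited theorem requires — in particular that it is enough to control contractions of order up to $q_i$ (the smaller order) and that one need not separately control self-contractions of $f^\epsilon_j$ or $g^\epsilon_i$, because those are already encoded in the assumed marginal convergences. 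Once the citation is correctly aligned, the proof is a two-line reduction.
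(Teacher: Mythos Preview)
The paper does not actually prove this proposition: it is stated in the Appendix without proof, with the remark that it ``slightly modifies results from \cite{Nourdin-Rosinski} and \cite{Nourdin-Nualart-Peccati}, which can be found in \cite{Gehringer-Li-fOU}.'' There is therefore no in-paper argument to compare your proposal against. Your outline---tightness of the pair from the marginal convergences, passage to a convergent subsequence, and invocation of the asymptotic-independence criterion of \cite{Nourdin-Rosinski,Nourdin-Nualart-Peccati} to force the joint subsequential limit to factorize---is exactly the structure of the argument in those references and in \cite{Gehringer-Li-fOU}, and your observation that only the cross-contractions (not the self-contractions) need to vanish, with the marginal limits allowed to be non-Gaussian, correctly identifies the ``slight modification'' the paper alludes to.
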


\subsection{Reduction}
\begin{theorem}[Theorem 3.2 \cite{Billingsley}]\label{billingsley-3.2}
	Given random variables $X^{\epsilon}_M$ and $X^{\epsilon}$ such that $X^{\epsilon}_M$ converges weakly to $X_M$ as $ \epsilon \to 0$, $X_M \to X$ as $M \to \infty$, and 
	$$ \lim_{M \to \infty} \limsup_{\epsilon \to 0 } \Vert X^{\epsilon}_M - X^{\epsilon} \Vert_{L^2(\Omega)} =0,$$
	then, $X^{\epsilon} \to X$ weakly.
\end{theorem}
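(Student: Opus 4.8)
The plan is to deduce the statement from the standard metrization of weak convergence together with a triangle inequality, exactly as in Billingsley's original argument; since the theorem is literally Theorem 3.2 of \cite{Billingsley}, the only real content is to make visible why $L^2(\Omega)$-control suffices. I would work on the separable metric space $S$ in which the $X^\epsilon$ and the $X^\epsilon_M$ take values — in our applications $S=\R^d$ (finite-dimensional distributions) or $S=\C^\gamma([0,1],\R^N)$ — write $\rho$ for its metric, and use the bounded--Lipschitz distance between laws on $S$,
$$ d_{\mathrm{BL}}(\mu,\nu)=\sup\Big\{\Big|\int_S g\,d\mu-\int_S g\,d\nu\Big|:\ \|g\|_\infty\le 1,\ \Lip(g)\le 1\Big\},$$
which metrizes weak convergence of probability measures on $S$ precisely because $S$ is separable.

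The first step is the elementary bound $d_{\mathrm{BL}}(\mathrm{Law}(X),\mathrm{Law}(Y))\le\|\rho(X,Y)\|_{L^2(\Omega)}$: for any admissible test function $g$ one has $|\E g(X)-\E g(Y)|\le\E[\rho(X,Y)\wedge 2]\le\E[\rho(X,Y)]\le\|\rho(X,Y)\|_{L^2(\Omega)}$ by Cauchy--Schwarz, and taking the supremum over $g$ yields the claim. In particular $d_{\mathrm{BL}}(X^\epsilon,X^\epsilon_M)\le\|X^\epsilon-X^\epsilon_M\|_{L^2(\Omega)}$, the norm being read as $\E[\rho(X^\epsilon,X^\epsilon_M)^2]^{1/2}$ when $S$ is infinite-dimensional.

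Next I would combine this with the triangle inequality to obtain, for every $M$ and $\epsilon$,
$$ d_{\mathrm{BL}}(X^\epsilon,X)\le\|X^\epsilon-X^\epsilon_M\|_{L^2(\Omega)}+d_{\mathrm{BL}}(X^\epsilon_M,X_M)+d_{\mathrm{BL}}(X_M,X).$$
Sending $\epsilon\to 0$ with $M$ held fixed annihilates the middle term, because $X^\epsilon_M\to X_M$ weakly, so
$$\limsup_{\epsilon\to 0}d_{\mathrm{BL}}(X^\epsilon,X)\le\limsup_{\epsilon\to 0}\|X^\epsilon-X^\epsilon_M\|_{L^2(\Omega)}+d_{\mathrm{BL}}(X_M,X).$$
Letting $M\to\infty$, the first term tends to $0$ by the hypothesis $\lim_M\limsup_\epsilon\|X^\epsilon_M-X^\epsilon\|_{L^2(\Omega)}=0$ and the second by $X_M\to X$; since the left-hand side does not depend on $M$ we conclude $\limsup_{\epsilon\to 0}d_{\mathrm{BL}}(X^\epsilon,X)=0$, i.e. $X^\epsilon\to X$ weakly.

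There is no genuine obstacle here; the only points to watch are that the ambient space be separable (so that $d_{\mathrm{BL}}$ really does metrize weak convergence) and that the two limits be taken in the right order, $\epsilon\to 0$ before $M\to\infty$, which is exactly why the hypothesis reads $\lim_M\limsup_\epsilon$ rather than $\limsup_\epsilon\lim_M$. If one wishes to avoid even this argument, it suffices to note that $L^2(\Omega)$-convergence of $X^\epsilon_M-X^\epsilon$ to $0$ implies, by Markov's inequality, that $\lim_M\limsup_\epsilon\P(\rho(X^\epsilon_M,X^\epsilon)\ge\eta)=0$ for every $\eta>0$, which is the hypothesis under which Theorem 3.2 of \cite{Billingsley} is stated, and then quote that result verbatim.
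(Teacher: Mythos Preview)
Your proof is correct and complete. However, the paper does not actually prove this statement: it is quoted in the appendix as Theorem 3.2 of \cite{Billingsley} and used as a black box, with no argument supplied. Your bounded-Lipschitz metric argument is exactly the standard route, and your closing remark that the $L^2$ hypothesis implies Billingsley's original in-probability hypothesis via Markov's inequality is precisely the observation that justifies the paper's slightly specialized formulation.
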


\begin{theorem}[Cramer-Wold]\label{Cramer-Wold-theorem}
	Given  random variables
	$(X^{1,\epsilon}, \dots, X^{N,\epsilon} )$ and $(X^1, \dots , X^N)$. Then, $(X^{1,\epsilon}, \dots, X^{N,\epsilon} ) \to  (X^1, \dots , X^N)$ in law if and only if for every $(t_1, \dots, t_N) \in \R^N$,
	$$\sum_{j=1}^N t_j X^{j,\epsilon} \to \sum_{j=1}^N t_j X^j,$$
	in law.
\end{theorem}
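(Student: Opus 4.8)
\emph{Proof proposal.} This is the classical Cramer--Wold device, and the plan is to obtain it from the one-dimensional theory of characteristic functions together with the L\'evy continuity theorem on $\R^N$.

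The implication from left to right is immediate: if $(X^{1,\epsilon},\dots,X^{N,\epsilon}) \to (X^1,\dots,X^N)$ in law, then for each fixed $(t_1,\dots,t_N)\in\R^N$ the map $\R^N \ni x \mapsto \sum_{j=1}^N t_j x_j$ is continuous, so the continuous mapping theorem gives $\sum_{j=1}^N t_j X^{j,\epsilon} \to \sum_{j=1}^N t_j X^j$ in law.

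For the converse I would pass to characteristic functions. Assume that $\sum_{j=1}^N t_j X^{j,\epsilon} \to \sum_{j=1}^N t_j X^j$ in law for every $t=(t_1,\dots,t_N)\in\R^N$. Weak convergence of real random variables implies pointwise convergence of their characteristic functions; evaluating at the argument $1$ gives
$$
\E\!\left[\exp\!\Big(i\sum_{j=1}^N t_j X^{j,\epsilon}\Big)\right]
\;\longrightarrow\;
\E\!\left[\exp\!\Big(i\sum_{j=1}^N t_j X^{j}\Big)\right] \qquad (\epsilon \to 0).
$$
The left-hand side is exactly the characteristic function $\phi_\epsilon$ of the random vector $(X^{1,\epsilon},\dots,X^{N,\epsilon})$ evaluated at $t$, and the right-hand side is the characteristic function $\phi$ of $(X^1,\dots,X^N)$ at $t$; hence $\phi_\epsilon(t)\to\phi(t)$ for every $t\in\R^N$.

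It then remains to invoke the multivariate L\'evy continuity theorem. Since the pointwise limit $\phi$ is the characteristic function of the prescribed vector $(X^1,\dots,X^N)$, it is continuous at the origin, so the pointwise convergence $\phi_\epsilon\to\phi$ upgrades to the weak convergence $(X^{1,\epsilon},\dots,X^{N,\epsilon}) \to (X^1,\dots,X^N)$, which is the claim. The only point requiring genuine care is this last step: L\'evy's theorem needs either tightness of the family $\{(X^{1,\epsilon},\dots,X^{N,\epsilon})\}_\epsilon$ or continuity of the limiting function at $0$, and here the latter holds for free precisely because the limit is assumed from the outset to be an honest law; without that hypothesis one would first have to establish tightness, which in this setting would follow componentwise from the scalar convergence of the $X^{j,\epsilon}$.
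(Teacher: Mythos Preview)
Your proof is correct and is the standard argument via characteristic functions and the multivariate L\'evy continuity theorem. The paper itself does not prove this statement; it is quoted in the appendix as a classical result without proof, so there is nothing to compare against beyond noting that your approach is the usual one.
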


\newcommand{\etalchar}[1]{$^{#1}$}

\end{document}